\documentclass{amsart}
\usepackage{amsmath}
  \usepackage{paralist}
  \usepackage{amssymb}
 \usepackage{amsthm}
 \usepackage{esint}
 \usepackage[colorlinks=true]{hyperref}
\hypersetup{urlcolor=blue, citecolor=red}

\newtheorem{theorem}{Theorem}[section]
\newtheorem{corollary}[theorem]{Corollary}
\newtheorem{lemma}[theorem]{Lemma}

\theoremstyle{definition}

\newtheorem{remark}[theorem]{Remark}

\numberwithin{equation}{section}

\title[Lower critical Choquard equations with general nonlinearity ]
      {Existence thresholds and limit profiles of ground states for lower critical Choquard equations with general nonlinearities}

\author[Shiwang Ma and Yachen Wang]{}

\subjclass[2010]{Primary 35J60, 35Q55; Secondary 35B25, 35B40, 35R09, 35J91}

 \keywords{Lower critical Choquard equation, general nonlinearity, ground state, existence threshold, limit profile}

\email{shiwangm@nankai.edu.cn (S.Ma), wyachen@scut.edu.cn (Y.Wang). }

\thanks{This work is supported by the National Natural Science Foundation of China (No. 11571187).\\
  {$^{\rm * }$ Corresponding author:  Shiwang Ma}
}

\begin{document}
\maketitle

\centerline{\scshape   Shiwang Ma$^{\rm 1 *}$,\; Yachen Wang$^{\rm  \rm  2}$}
\medskip
{\footnotesize
  \centerline{$^{\rm 1}$ School of Mathematical Science and LPMC, Nankai University}
 \centerline{Tianjin 300071, China}
 \centerline{$^{\rm 2}$ School of Mathematics, South China University of Technology}
  \centerline{ Guangzhou 510641, Guangdong, China }}

\bigskip

\begin{abstract}
In this paper, we study the existence, non-existence and asymptotic behavior of positive ground states for the nonlinear Choquard equation:
\begin{equation}\label{0.1}
-\Delta u+\varepsilon u=\big(I_{\alpha}\ast F(u)\big)F'(u),\quad
u\in H^1(\mathbb R^N),
\end{equation}
where $F(u)=|u|^{\frac{N+\alpha}{N}}+G(u)$ with $G(u)=\int_0^ug(s)ds$,  $N\geq3$ is an integer,  $I_{\alpha}$ is the Riesz potential of order $\alpha\in(0,N)$ and $\varepsilon>0$ is a  frequency parameter.  Under some mild subcritical growth assumptions on $g\in C([0,\infty), [0,\infty))$, we establish a  sharp threshold result  for the existence of  ground states,
and an asymptotic characterization of the ground state solutions  as $\varepsilon\to 0$. In particular,  if $g(s)\sim s^{q-1}$ as $s\to 0$ for some $q\in (\frac{N+\alpha}{N}, \frac{N+\alpha}{N-2})$,  then if $q<\frac{N+\alpha+4}{N}$, \eqref{0.1} admits a ground state for all $\varepsilon>0$,  and  if $q\ge \frac{N+\alpha+4}{N}$, then a threshold phenomena occur: there exists $\varepsilon_q>0$ such  that \eqref{0.1} has no ground state for $\varepsilon\in (0,\varepsilon_q)$ and admits a ground state for $\varepsilon>\varepsilon_q$. If $g(s)\simeq as^{q-1}$  as $s\to 0$ for some $a>0$  and $q\in (\frac{N+\alpha}{N}, \min\{\frac{N+\alpha}{N-2}, \frac{N+\alpha+4}{N}\})$, we show that
as $\varepsilon \to 0$, the ground state solutions of \eqref{0.1}, after a suitable rescaling, converges in $H^1(\mathbb R^N)$  to a particular solution of the  Hardy-Littlewood-Sobolev critical equation $u=\frac{N+\alpha}{N}(I_{\alpha}*|u|^{\frac{N+\alpha}{N}})|u|^{\frac{N+\alpha}{N}-2}u$. It turns out that the limit profiles are determined solely by the locations of $(a,q)$ in $(0,+\infty)\times (\frac{N+\alpha}{N}, \min\{\frac{N+\alpha}{N-2}, \frac{N+\alpha+4}{N}\})$. We also establish a novel sharp asymptotic characterization of such a rescaling.
\end{abstract}

\section{Introduction and Main Results}

In this paper, we consider the following Choquard equation with general nonlinearity
\begin{equation}\label{e11}
-\Delta u+\varepsilon u=(I_{\alpha}\ast F(u))F'(u),\quad
u\in H^1(\mathbb R^N),
\end{equation}
where $N\geq3$ is an integer, $\varepsilon>0$ is a frequency parameter, $I_{\alpha}$ is the Riesz potential of order $\alpha\in(0,N)$ and  is defined for every
$x\in \mathbb R^N\backslash \{0\}$ by
$$I_{\alpha}(x)=\frac{A_{\alpha}(N)}{|x|^{N-\alpha}}\ \ \mbox{and }\ \ A_{\alpha}(N)=\frac{\Gamma(\frac{N-\alpha}{2})}{\Gamma(\frac{\alpha}{2})\pi^{\frac{N}{2}}2^{\alpha}},$$ where $\Gamma$ denotes the Gamma function. We always  assume that  $F(u)=|u|^{\frac{N+\alpha}{N}}+G(u)$ with $G(u)=\int_0^ug(s)ds$ and $g\in C(\mathbb R,\mathbb R)$ being a subcritical nonlinearity, which is specified later, then equation
\eqref{e11} becomes
\begin{equation}\label{e12}
-\Delta u+\varepsilon u=(I_{\alpha}\ast(|u|^{\frac{N+\alpha}{N}}+G(u)))(\frac{N+\alpha}{N}|u|^{\frac{N+\alpha}{N}-2}u+g(u)),\quad
u\in H^1(\mathbb R^N).
\end{equation}

A prototype of such equations comes from the research of standing-wave solutions of the nonlinear Schr\"{o}dinger equation with attractive combined nonlinearities
\begin{equation}\label{e13}
\begin{aligned}
i\psi_{t}-\Delta \psi=&(I_{\alpha}*(|\psi|^{\frac{N+\alpha}{N}}+|\psi|^{q_i}+|\psi|^{q_j}))\\
\cdot &(\frac{N+\alpha}{N}|\psi|^{\frac{N+\alpha}{N}-2}\psi+q_i|\psi|^{q_i-2}\psi+q_j|\psi|^{q_j-2}\psi),
\end{aligned}
\end{equation}
in $\mathbb R^N\times\mathbb R$.
One makes the ansatz $\psi(t,x)=e^{-i\varepsilon t}u(x)$ in  \eqref{e13}, where $u: \mathbb R^N\rightarrow \mathbb C$, then  \eqref{e13} reduces to the equation \eqref{e12} with
$g(u)=q_i|u|^{q_i-2}u+q_j|u|^{q_j-2}u$. The theory about NLS with combined power nonlinearities has been first developed by T.Tao, M.Visan and X. Zhang \cite{TVZ2007}  and  received much attention during the last decades (cf. \cite{AIIKN2019, AIKN, LMZ2019, LM2019, LM2022, Ma-0, WW2022}).

A solution $u_{\varepsilon}$ of \eqref{e12} is a critical point of the corresponding action functional defined by
\begin{equation}\label{e14}
I_{\varepsilon}(u)=\frac{1}{2}\int_{\mathbb R^N}|\nabla u|^2+\frac{\varepsilon}{2}\int_{\mathbb R^N}|u|^2
-\frac{1}{2}\int_{\mathbb R^N}(I_{\alpha}*(|u|^{\frac{N+\alpha}{N}}+G(u)))
(|u|^{\frac{N+\alpha}{N}}+G(u)).
\end{equation}
The existence and local regularity properties of ground state $u_{\varepsilon}$ to \eqref{e12} are discussed  in \cite{LLT2022, LMZ2019, LM2019, MS2015}. In this paper, what we are interested in is the existence, non-existence and  the limit asymptotic
profile of the ground-states  $u_{\varepsilon}$ of the problem \eqref{e12} and the asymptotic behavior of different norms of $u_{\varepsilon}$, as
$\varepsilon \to 0$.

In \cite{MM2014},
Moroz and Muratov considered the asymptotic properties of the equation with a combined attractive-repulsive  nonlinearity in the form
\begin{equation}\label{e15}
-\Delta u+ \varepsilon u=|u|^{p-2}u- |u|^{q-2}u\ \ \mbox{in}\ \ \mathbb R^N,
\end{equation}
where $\varepsilon>0$ is a small parameter and $2<q<p<\infty$.  They pointed out that the behavior of solutions depends on whether p is less than, equal to or greater than the critical Sobolev exponent $2^*=\frac{2N}{N-2}$. Later, Liu and Moroz\cite{LM2022} extended the results in \cite{MM2014} to a class of Choquard type equation with attractive nonlocal and repulsive local interaction terms.

There are also many results about the existence and properties of ground state for Schr\"odinger equations with combined attractive nonlinearity in the form
\begin{equation}\label{e16}
-\Delta u+ \varepsilon u=|u|^{p-2}u+ |u|^{q-2}u\ \ \mbox{in}\ \ \mathbb R^N,
\end{equation}
where $2<q<p\le 2^*$. This equation has attracted the attention of many authors in the past decade.
The existence of ground states for subcritical $p<2^*$ is classical and goes back to \cite{Berestycki-1}.
In the Sobolev critical case $p=2^*$, it is well-known \cite{ASM2012, LLT2017, ZZ2012} that when $q\in (\max\{2,\frac{4}{N-2}\}, 2^*)$, 
the equation \eqref{e16} admits a ground state solution $u_\varepsilon$ for every $\varepsilon>0$. 
Recently, T. Akahori et al. \cite{Akahori-4} and Wei and Wu \cite{WW2023} proved that \eqref{e16} admits a ground state for all $N\ge 3$ and small $\varepsilon>0$, while for $N=3$ and $q\in (2,4]$, \eqref{e16} has no ground state for large $\varepsilon>0$. 
After a suitable rescaling, T. Akahori et al. \cite{AIIKN2019} established a uniform decay estimate for the ground state $u_\varepsilon$, and then proved the uniqueness and nondegeneracy in $H^1_{rad}(\mathbb R^N)$ of ground states $u_\varepsilon$ for $N\ge 5$ and large $\varepsilon>0$, and showed that for $N\ge 3$ and $q\in (\max\{2,\frac{4}{N-2}\}, 2^*)$, as $\varepsilon\to\infty$, $u_\varepsilon$ tends to a particular solution of the critical Emden--Fowler equation. 
For other related papers on the local equation \eqref{e16} we refer readers to \cite{MM2023} and the references therein.  
\smallskip

Recently, based on the existence and regularity of ground states  in \cite{LMZ2019, LM2019}, Ma and Moroz \cite{MM, MM2025}  studied the asymptotic behavior of ground states of  the following nonlinear Choquard equation
\begin{equation}\label{e17}
-\Delta u+ \varepsilon u=(I_{\alpha}*|u|^p)|u|^{p-2}u+|u|^{q-2}u\ \ \mbox{in}\ \ \mathbb R^N,
\end{equation}
where $N\geq3$ is an integer, $p\in \left[\frac{N+\alpha}{N},\frac{N+\alpha}{N-2}\right]$, $q\in \left(2,\frac{2N}{N-2}\right]$, $I_{\alpha}$ is the Riesz potential of order $\alpha\in(0,N)$ and $\varepsilon>0$ is a parameter. They established the
precisely asymptotic behaviors of positive ground state solutions as $\varepsilon \to 0$ and $\varepsilon \to \infty$ and discussed the connection to the existence and multiplicity of prescribed mass positive solutions to \eqref{e17} with the associated $L^2$ constraint condition $\int_{\mathbb R^N}|u|^2=a^2$. 
More recently in \cite{MM2024}, they extend the results in \cite{MM} to a critical Choquard equation combined with a general local perturbation term
\begin{equation}\label{e18}
-\Delta u+\varepsilon u=(I_{\alpha}*|u|^{\frac{N+\alpha}{N}})|u|^{\frac{N+\alpha}{N}-2}u+f(u)\ \ \mbox{in}\ \ \mathbb R^N.
\end{equation}
Under mild one-sided assumptions on  $f(u)$, they  prove the existence and non-existence of ground states, and establish an asymptotic characterization of the ground state solutions as $\varepsilon\to 0$ and $\varepsilon\to\infty$. 

We mention that  J. Van Schaftingen and J. Xia \cite{SX2018} proved the existence of ground states to \eqref{e18} with  $\varepsilon=1$ and subcritical local nonlinearity $f$ that satisfies Berestycki--Lions and Ambrosetti--Rabinowitz type conditions.
Recently,  X. Wang and F. Liao \cite{Wang-Liao} proved that a similar result holds true without the Ambrosetti--Rabinowitz condition.  X. Tang et al. \cite{Tang-1} studied the existence of ground states for \eqref{e18} when $f(u)$ is a pure power nonlinearity. Very recently, S. Chen et el. \cite{Chen-1} study the existence of ground state for \eqref{e18} in $\mathbb R^2$.  For more aspects about Choquard type equations, we refer the readers to the survey paper \cite{MV2017}.

Few results  have been obtained for the existence of ground states of the equation \eqref{e12} with general nonlinearity \cite{LW-2021,LLT2022}.  In particular, under some conditions on the nonlinearity  $g$  which are slightly weaker than the following general subcritical assumption 
\begin{itemize}
\item[\bf (H1)] $g\in C([0,\infty),[0,\infty))$ and $\lim_{s\to 0}g(s)/s^{\frac{\alpha}{N}}=\lim_{s\to \infty}g(s)/s^{\frac{2+\alpha}{N-2}}=0$,
\smallskip
\end{itemize}
Li et al. \cite{LLT2022}  proved  that the equation \eqref{e12} admits a radially ground state solution for any frequency $\varepsilon>0$, provided  that 
$$
\lim_{s\to 0}G(s)/s^{\frac{N+\alpha+4}{N}}=+\infty.
$$

However, to the best of our knowledge, little  is known in the case that 
$$
\limsup_{s\to 0}G(s)/s^{\frac{N+\alpha+4}{N}}<+\infty.
$$

In this case, generally speaking,   the existence of ground states depends heavily on the size of the frequency parameter $\varepsilon>0$. 

In the present paper, we will  establish an existence versus non-existence threshold result  for ground states under a  slightly stronger assumption. 
More precisely, we have the following sharp thresholds for the existence of ground states.

\begin{theorem}\label{t12} Assume $\alpha>N-4$ and  {\bf (H1)} 
 holds. 
If 
there exists  $q\in [\frac{N+\alpha+4}{N}, \frac{N+\alpha}{N-2})$ such that
$$
\limsup_{s\to 0}g(s)/s^{\frac{\alpha+4}{N}}<+\infty, \quad 
\liminf_{s\to 0}g(s)/s^{q-1}>0 
\leqno{\bf (H2)}
$$
and 
$$
\frac{N+\alpha}{N}G(s)\leq g(s)s, \quad \forall s>0,
\leqno{\bf (H3)}
$$
then there exists a positive constant $\varepsilon_{q}>0$ such that  the problem \eqref{e12} has no ground state solution for $\varepsilon\in (0,\varepsilon_{q})$ and admits a positive  ground state $u_{\varepsilon} \in H^1(\mathbb R^N)$ for $\varepsilon> \varepsilon_{q}$. Moreover,  if  $\lim_{s\to 0}g(s)/s^{\frac{\alpha+4}{N}}=0$, then the problem  \eqref{e12} with $\varepsilon=\varepsilon_q$  also admits a positive  ground state $u_{\varepsilon_q}\in H^1(\mathbb R^N)$. 
\end{theorem}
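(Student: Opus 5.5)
I will work with the ground state level
$$
m_\varepsilon=\inf_{u\in\mathcal P_\varepsilon} I_\varepsilon(u),
$$
where $\mathcal P_\varepsilon$ is the Pohozaev manifold, or equivalently the mountain-pass level, which coincides with it under {\bf (H1)} and {\bf (H3)}. I will compare $m_\varepsilon$ with the level of the Hardy--Littlewood--Sobolev critical problem. Let $S_\alpha$ denote the sharp constant in
$$
\int_{\mathbb R^N}(I_\alpha*|u|^{\frac{N+\alpha}{N}})|u|^{\frac{N+\alpha}{N}}\le S_\alpha^{-\frac{N+\alpha}{N}}\Big(\int_{\mathbb R^N}|u|^2\Big)^{\frac{N+\alpha}{N}},
$$
with extremals $Q_\lambda(x)=\lambda^{N/2}Q(\lambda x)$, where $Q(x)=c(1+|x|^2)^{-N/2}$. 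The critical level is
$$
m^*_\varepsilon=\frac{\alpha}{2(N+\alpha)}\,\varepsilon^{\frac{N+\alpha}{\alpha}}S_\alpha^{\frac{N+\alpha}{\alpha}}.
$$
Since $F\ge|u|^{\frac{N+\alpha}{N}}$, we have $m_\varepsilon\le m^*_\varepsilon$ for every $\varepsilon>0$.

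\emph{Compactness above the threshold.} The first step is a concentration-compactness lemma: if $m_\varepsilon<m^*_\varepsilon$, then a ground state exists. I would take a radial minimizing Pohozaev sequence; radial symmetry is available by symmetric rearrangement, since $g\ge0$. This sequence is bounded by {\bf (H3)}. Vanishing is excluded because $G$ is subcritical at infinity and $g(s)=o(s^{\alpha/N})$ at zero. The only remaining loss of compactness comes from the lower critical term, through spreading (vanishing) profiles. Such profiles can only carry energy at least $m^*_\varepsilon$, so the strict inequality gives convergence. Positivity then follows from using $|u|$ and the strong maximum principle.

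\emph{Monotonicity and definition of $\varepsilon_q$.} Next I will show that the ratio $m_\varepsilon/m^*_\varepsilon=m_\varepsilon\varepsilon^{-\frac{N+\alpha}{\alpha}}$ is nonincreasing in $\varepsilon$. For this I rescale $u(x)=\varepsilon^{N/(2\alpha)}\,v(\varepsilon^{1/\alpha}x)$ (in the dilation convention $\lambda^{N/2}v(\lambda x)$). This leaves the HLS critical term homogeneous of the same degree as $\varepsilon\|u\|_2^2$. The kinetic term then picks up the factor $\varepsilon^{2/\alpha}$ relative to the $\varepsilon$-scale, so increasing $\varepsilon$ lowers the normalized level; $G$ is handled via a comparison argument. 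I will set
$$
\varepsilon_q=\inf\{\varepsilon>0:\ m_\varepsilon<m^*_\varepsilon\}.
$$
For large $\varepsilon$, testing with $Q_\lambda$, and using $\liminf g/s^{q-1}>0$ so that $G\gtrsim s^q$ near $0$, gives $m_\varepsilon<m^*_\varepsilon$. Indeed, the gain from the $G$-term beats the kinetic cost $\lambda^2\|\nabla Q\|^2$, with $\lambda$ chosen appropriately. This holds because $Q\in H^1$ for $N\ge3$ only when $N>4$ or after truncation, and this is where $\alpha>N-4$ is used to control the cross terms. Hence $\varepsilon_q<\infty$, and strict monotonicity of the ratio gives existence for all $\varepsilon>\varepsilon_q$.

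\emph{Nonexistence for small $\varepsilon$; the main obstacle.} I need to show two things: that $m_\varepsilon=m^*_\varepsilon$ for small $\varepsilon$, and that in that case the level is not attained. This gives $\varepsilon_q>0$. Suppose $u$ is a ground state. Its Pohozaev identity combined with the Nehari identity gives
$$
\frac{N-2}{2}\int|\nabla u|^2 \text{ balanced against } \int(I_\alpha*F)\Big(g(u)u-\tfrac{N+\alpha}{N}G(u)\Big)\ \text{-type terms}.
$$
Combined with $g(s)\lesssim s^{\frac{\alpha+4}{N}}$ from {\bf (H2)} and the HLS/Gagliardo--Nirenberg inequality, this gives
$$
\int(I_\alpha*|u|^{\frac{N+\alpha}{N}})G(u)\lesssim\|u\|_2^{\cdots}\|\nabla u\|_2^2 .
$$
Here the exponent $\frac{N+\alpha+4}{N}$ makes the Gagliardo--Nirenberg power of $\nabla u$ exactly $2$. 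Meanwhile $\|u\|_2^2\lesssim\varepsilon^{\alpha/N\cdot\ldots}$ is small because $m_\varepsilon\le m^*_\varepsilon\to0$. This forces
$$
\int|\nabla u|^2\le o(1)\int|\nabla u|^2,
$$
so $\nabla u=0$, a contradiction. The same mechanism shows that no test function lowers the level below $m^*_\varepsilon$, so $m_\varepsilon=m^*_\varepsilon$ is not attained. The delicate point is controlling the large-$u$ part of $G$. For that I would use $L^\infty$ bounds, or split $G$ into $s\le1$ and $s>1$ and use the subcritical decay at infinity together with smallness of the mass.

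\emph{The endpoint $\varepsilon=\varepsilon_q$.} Assume $\lim g/s^{\frac{\alpha+4}{N}}=0$. I take ground states $u_\varepsilon$ with $\varepsilon\downarrow\varepsilon_q$. These are bounded, and their levels converge to $m_{\varepsilon_q}=m^*_{\varepsilon_q}$. The small-$o$ condition rules out the scenario in which the sequence spreads into an HLS bubble. Indeed, along a spreading sequence, $\int(I_\alpha*F)G$ would be $o(\|\nabla u\|^2)$ by the argument above. So $u_\varepsilon$ converges, after possibly passing to a subsequence, to a nontrivial critical point at level $m_{\varepsilon_q}$, which is the desired ground state.
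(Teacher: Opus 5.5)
Your architecture matches the paper's. You normalize so that $\tilde m_\varepsilon=\varepsilon^{-\frac{N+\alpha}{\alpha}}m_\varepsilon$ is nonincreasing, get a ground state whenever this lies strictly below the HLS level, and exclude ground states for small $\varepsilon$ via $\|\nabla u\|_2^2=\int_{\mathbb R^N}(I_\alpha*F(u))\big(\frac N2 g(u)u-\frac{N+\alpha}{2}G(u)\big)$, where the exponent $\frac{N+\alpha+4}{N}$ produces exactly $\|\nabla u\|_2^2$. Your endpoint argument via the little-$o$ hypothesis is essentially the paper's.

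There is, however, a genuine gap. Your mechanism excludes ground states only for $\varepsilon$ below some $\tilde\varepsilon$, where the coefficient multiplying $\|\nabla u\|_2^2$ is less than $1$. The theorem asserts nonexistence on all of $(0,\varepsilon_q)$, and on $[\tilde\varepsilon,\varepsilon_q)$ you only know $m_\varepsilon=m^*_\varepsilon$; nothing in your plan prevents this level from being attained there. ``Strict monotonicity of the ratio'' does not help: the ratio is identically $1$ on $(0,\varepsilon_q)$. Where you invoke it (existence for $\varepsilon>\varepsilon_q$), non-strict monotonicity already suffices.

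What is missing is strict decrease past any $\varepsilon$ at which the level is attained; the paper takes this from Lemma 3.3 of \cite{WW2023}. Let $v$ attain $\tilde m_\varepsilon$, let $\varepsilon'>\varepsilon$, and let $t'$ maximize $t\mapsto\tilde J_{\varepsilon'}(tv)$, with $\tilde J_\mu$ the paper's rescaled functional. Then $\tilde m_{\varepsilon'}\le\tilde J_{\varepsilon'}(t'v)<\tilde J_\varepsilon(t'v)\le\tilde J_\varepsilon(v)=\tilde m_\varepsilon$. The strict inequality holds because $\partial_\mu\tilde J_\mu(w)$ is a negative multiple of $\int_{\mathbb R^N}(I_\alpha*F_\mu(w))\big(g(s)s-\frac{N+\alpha}{N}G(s)\big)dx$, evaluated at $s=\mu^{\frac{N(2+\alpha)}{4\alpha}}w$, where $F_\mu(w)=|w|^{\frac{N+\alpha}{N}}+\mu^{-\frac{(N+\alpha)(2+\alpha)}{4\alpha}}G(\mu^{\frac{N(2+\alpha)}{4\alpha}}w)$. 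This integral is strictly positive. Equality in {\bf (H3)} on an interval $(0,s_1)$ would force $G(s)=cs^{\frac{N+\alpha}{N}}$ there, contradicting {\bf (H2)}. So the open set where {\bf (H3)} is strict accumulates at $0$ and is hit on a set of positive measure. Attainment at some $\varepsilon<\varepsilon_q$ would therefore give $m_{\varepsilon'}<m^*_{\varepsilon'}$ for $\varepsilon'\in(\varepsilon,\varepsilon_q)$, contradicting the definition of $\varepsilon_q$.

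Relatedly, ``the same mechanism shows that no test function lowers the level'' is wrong as stated, since the identity holds only for critical points. $m_\varepsilon=m^*_\varepsilon$ for small $\varepsilon$ follows instead from nonexistence plus your compactness step.

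Two further corrections.

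First, your rescaling $u(x)=\varepsilon^{N/(2\alpha)}v(\varepsilon^{1/\alpha}x)$ preserves both $\|u\|_2$ and $\mathbb D(u)$, so it does not put $\varepsilon\|u\|_2^2$ and the HLS term at the same degree. The right one is $u(x)=\varepsilon^{\frac{N(2+\alpha)}{4\alpha}}v(\varepsilon^{1/2}x)$. It gives $I_\varepsilon(u)=\varepsilon^{\frac{N+\alpha}{\alpha}}\tilde J_\varepsilon(v)$, with all $\varepsilon$-dependence in $\varepsilon^{-\frac{(N+\alpha)(2+\alpha)}{4\alpha}}G(\varepsilon^{\frac{N(2+\alpha)}{4\alpha}}s)$, which is nondecreasing in $\varepsilon$ exactly by {\bf (H3)}. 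That is the real role of {\bf (H3)} (your unspecified ``comparison argument''). It is not needed for boundedness of Poho\v{z}aev sequences, which holds without it.

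Second, $\alpha>N-4$ has nothing to do with the large-$\varepsilon$ test. In fact $(1+|x|^2)^{-N/2}\in H^1(\mathbb R^N)$ for every $N$; you are thinking of the Aubin--Talenti bubble. That test works as you describe. The condition is exactly what makes $[\frac{N+\alpha+4}{N},\frac{N+\alpha}{N-2})$ nonempty, and it is what disposes of your ``delicate'' large-$u$ part. Use the global bound $g(s)\le\eta s^{\frac{\alpha+4}{N}}+Cs^{q-1}+\delta s^{\frac{2+\alpha}{N-2}}$. The last term contributes $\lesssim\|u\|_2^{\frac{N+\alpha}{N}}\|\nabla u\|_2^{\frac{N+\alpha}{N-2}}$, which is $o(\|\nabla u\|_2^2)$ because $\frac{N+\alpha}{N-2}>2$ and $\|\nabla u\|_2^2\lesssim m^*_\varepsilon\to 0$. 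The $Cs^{q-1}$ term and the $\int(I_\alpha*G(u))(\cdots)$ terms are handled the same way, since they carry powers of $\|\nabla u\|_2$ at least $2$ together with vanishing factors.
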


We remark that $\liminf_{s\to 0}g(s)/s^{q-1}>0$ for some $q\in [\frac{N+\alpha+4}{N}, \frac{N+\alpha}{N-2})$ is almost a necessary condition for the existence of ground states.   

If $\liminf_{s\to 0}g(s)/s^{q-1}>0$ for some $q\in (\frac{N+\alpha}{N}, \frac{N+\alpha+4}{N})$,  then $\lim_{s\to 0}g(s)/s^{\frac{\alpha+4}{N}}=\lim_{s\to 0}G(s)/s^{\frac{N+\alpha+4}{N}}=+\infty$, and then the equation \eqref{e12} has a ground state solution for any frequency $\varepsilon>0$. 
 In the present paper,  our second purpose is to study the limit asymptotic profiles of ground states as $\varepsilon\to 0$.  
 To this end, we  make the following stronger assumption 
 \smallskip
\begin{itemize}
\item[\bf (H4)] there exist $a>0$ and  $q\in (\frac{N+\alpha}{N}, \frac{N+\alpha}{N-2})$ such that
$
\lim_{s\to 0}g(s)/s^{q-1}=a.
$
\end{itemize}
 \smallskip

We show that under suitable rescaling, the limit equation of \eqref{e12} is given by the critical Hardy-Littlewood-Sobolev  equation
\begin{equation}\label{e19}
w=\frac{N+\alpha}{N}(I_{\alpha}*|w|^{\frac{N+\alpha}{N}})|w|^{\frac{N+\alpha}{N}-2}w \ \ \mbox{in}\ \ \mathbb R^N.
\end{equation}
It is well-known (see \cite{S2018}) that
\begin{equation}\label{e110}
S_{\alpha}=\inf_{u \in H^1(\mathbb R^N)\backslash\{0\}}\frac{\int_{\mathbb R^N}|u|^2}
{\left(\int_{\mathbb R^N}(I_{\alpha}*|u|^{\frac{N+\alpha}{N}})|u|^{\frac{N+\alpha}{N}}\right)^{\frac{N}{N+\alpha}}}
\end{equation}
is well-defined and achieved. Moreover,  the minimizers are  given by
\begin{equation}\label{e111}
W_1(x):=\left(\frac{A}{1+|x|^2}\right)^{\frac{N}{2}}
\end{equation}
and the family of its rescalings
\begin{equation}\label{e112}
W_{\rho}(x):=\rho^{-\frac{N}{2}}W_1(x/\rho),\ \rho>0.
\end{equation}
It is also well-known that $W_{\rho}(x)$ are radial ground states of \eqref{e19} for suitable $A>0$.

\smallskip

\begin{theorem}\label{t11} Assume  {\bf(H1)} and  {\bf (H4)} hold. If $q<\frac{N+\alpha+4}{N}$ and  $u_{\varepsilon}$ be the radial ground state of \eqref{e12}, then  as $\varepsilon \to 0$, there hold
$$u_{\varepsilon}(0)=\|u_\varepsilon\|_\infty\sim \varepsilon^{\frac{N(2+\alpha)}{\alpha(N+\alpha+4-Nq)}},\ \
\|\nabla u_{\varepsilon}\|_{2}^2\sim \varepsilon^{\frac{(N+2)(N+\alpha)-N(N-2)q}{\alpha(N+\alpha+4-Nq)}},$$
$$\|u_{\varepsilon}\|_2^2=\varepsilon^{\frac{N}{\alpha}}\left[\left(\frac{N}{N+\alpha}\right)^{\frac{N}{\alpha}}S_{\alpha}^{\frac{N+\alpha}{\alpha}}
+O\left(\varepsilon^{\frac{(2+\alpha)[Nq-(N+\alpha)]}{\alpha(N+\alpha+4-Nq)}}\right)\right],$$
$$\int_{\mathbb R^N}(I_{\alpha}*|u_{\varepsilon}|^{\frac{N+\alpha}{N}})|u_{\varepsilon}|^{\frac{N+\alpha}{N}}
=\varepsilon^{\frac{N+\alpha}{\alpha}}
\left[\left(\frac{N}{N+\alpha}S_{\alpha}\right)^{\frac{N+\alpha}{\alpha}}+O\left(\varepsilon^\frac{(2+\alpha)[Nq-(N+\alpha)]}{\alpha(N+\alpha+4-Nq)}\right)\right].$$
Moreover, there exists $\xi_{\varepsilon} \in (0,\infty)$ verifying
$\xi_{\varepsilon}\sim \varepsilon^{-\frac{N(q-1)+\alpha}{\alpha(4+N+\alpha-Nq)}}$
such that for small $\varepsilon>0$, the rescaled ground states
$w_{\varepsilon}(x)=\varepsilon^{-\frac{N}{2\alpha}}\xi_{\varepsilon}^{\frac{N}{2}}u_{\varepsilon}(\xi_{\varepsilon}x)$
satisfies
$$\|\nabla w_{\varepsilon}\|_2^2\thicksim \|w_{\varepsilon}\|_2^2\thicksim
\int_{\mathbb R^N}(I_{\alpha}*|w_{\varepsilon}|^{\frac{N+\alpha}{N}})|w_{\varepsilon}|^{\frac{N+\alpha}{N}}\thicksim
\int_{\mathbb R^N}(I_{\alpha}*|w_{\varepsilon}|^{\frac{N+\alpha}{N}})|w_{\varepsilon}|^{q}\thicksim1,$$
and as $\varepsilon \to 0$, $w_{\varepsilon}$ converges to $W_{\rho_0}$  in $H^1(\mathbb R^N)$,  where 
$$\rho_0=\left(\frac{2q\int_{\mathbb R^N}|\nabla W_1|^2}
{a[Nq-(N+\alpha)]\int_{\mathbb R^N}(I_{\alpha}*|W_1|^{\frac{N+\alpha}{N}})|W_1|^{q}}\right)^{\frac{2}{N+\alpha+4-Nq}}.$$
Furthermore,  as $\varepsilon\to 0$, the least energy  $I_{\varepsilon}(u_{\varepsilon})$ of the ground state satisfies
$$\frac{\alpha}{2N}\left(\frac{N}{N+\alpha}S_{\alpha}\right)^{\frac{N+\alpha}{\alpha}}-\varepsilon^{-\frac{N+\alpha}{\alpha}}I_{\varepsilon}(u_{\varepsilon})\sim 
\varepsilon^{\frac{(2+\alpha)[Nq-(N+\alpha)]}{\alpha(N+\alpha+4-Nq)}}.
$$
\end{theorem}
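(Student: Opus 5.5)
We follow the strategy Ma and Moroz used for \eqref{e17}. The first step is a rescaling that makes the critical HLS part dominant. Set $v(x)=\varepsilon^{-\frac{N}{2\alpha}}u(x/\sqrt{\varepsilon})$. Then \eqref{e12} becomes
$$-\varepsilon\Delta v + v=(I_\alpha*(|v|^{\frac{N+\alpha}{N}}+\tilde G_\varepsilon(v)))(\tfrac{N+\alpha}{N}|v|^{\frac{N+\alpha}{N}-2}v+\tilde g_\varepsilon(v)),$$
where $\tilde G_\varepsilon(v)=\varepsilon^{-\frac{N+\alpha}{2\alpha}}G(\varepsilon^{\frac{N}{2\alpha}}v)$. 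By (H4), $\tilde G_\varepsilon(v)\approx \frac{a}{q}\varepsilon^{\frac{N(q-1)-\alpha}{2\alpha}}|v|^q$. The formal limit is therefore the HLS equation \eqref{e19}. Since that limit is invariant under the dilations $W_\rho$, the scale $\xi_\varepsilon$ has to be selected by the competition between the gradient term and the $q$-term.

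\textbf{Step 1: energy comparison and the upper bound.} Let $m_\varepsilon$ be the rescaled ground-state level. The limiting level is
$$m_0=\frac{\alpha}{2N}\Big(\frac{N}{N+\alpha}S_\alpha\Big)^{\frac{N+\alpha}{\alpha}},$$
attained on the $W_\rho$. Use $W_\rho$ (cut off if $N\le 4$, where $W_1\notin L^2$ may fail; since $\alpha>0$ we have $W_1\in L^2$ whenever $N/2\cdot 2>N$ is false, so check the integrability of $|\nabla W_1|^2$ and $(I_\alpha*W_1^{(N+\alpha)/N})W_1^q$) as a test function on the Nehari/Pohozaev manifold. Expanding gives
$$m_\varepsilon\le m_0+\min_\rho\Big[\tfrac{\varepsilon}{2}\rho^{-2}\|\nabla W_1\|_2^2-c\,\varepsilon^{\frac{N(q-1)-\alpha}{2\alpha}}\rho^{\frac{N(q-1)-\alpha}{2}\cdot\frac{?}{}}\cdots\Big].$$
Optimizing in $\rho$ yields $m_0-m_\varepsilon\gtrsim \varepsilon^{\frac{(2+\alpha)[Nq-(N+\alpha)]}{\alpha(N+\alpha+4-Nq)}}$. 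This optimization is valid exactly because $q<\frac{N+\alpha+4}{N}$: the $q$-term scales like $\rho^{-(Nq-N-\alpha)/2}$, which is less singular than $\rho^{-2}$. The optimizing $\rho$ gives both the exponent of $\xi_\varepsilon$ and the constant $\rho_0$, obtained from $\frac{d}{d\rho}$ of $\rho^{-2}\|\nabla W_1\|^2-\frac{2a}{q}\rho^{-(Nq-N-\alpha)/2}\int (I_\alpha*W_1^{\frac{N+\alpha}{N}})W_1^q$ set equal to $0$.

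\textbf{Step 2: the lower bound and the norm estimates.} Combine the Nehari and Pohozaev identities for $u_\varepsilon$ to express $\|\nabla u_\varepsilon\|^2$, $\varepsilon\|u_\varepsilon\|^2$ and the HLS term through the $G$-term. Together with the HLS inequality \eqref{e110}, this gives $\|v_\varepsilon\|_2^2\to (N/(N+\alpha))^{N/\alpha}S_\alpha^{(N+\alpha)/\alpha}$ with the stated $O(\cdot)$ error, and the matching lower bound $m_0-m_\varepsilon\lesssim$ the same power. The key sub-step is an a priori control of the $q$-term by the gradient term. Here the Pohozaev identity gives
$$\varepsilon\|\nabla v\|^2\sim \int (I_\alpha*|v|^{\frac{N+\alpha}{N}})\tilde G_\varepsilon(v),$$
and one bounds the right-hand side by interpolation (HLS plus Gagliardo--Nirenberg) in terms of $\|v\|_2$ and $\|\nabla v\|_2$. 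This interpolation yields $\|\nabla v_\varepsilon\|_2^2\sim \varepsilon^{-1}\xi$-type scaling, where $\xi_\varepsilon$ is defined by equating the two sides. We then take $\xi_\varepsilon$ so that $\|\nabla w_\varepsilon\|_2=\|\nabla W_{\rho_0}\|_2$ (or via $\|w_\varepsilon\|_\infty$) and deduce that all four quantities for $w_\varepsilon$ are $\sim1$.

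\textbf{Step 3: convergence, the sup norm, and the main obstacle.} After the second rescaling, $w_\varepsilon$ is a bounded radial minimizing sequence for $S_\alpha$, since its HLS quotient tends to $S_\alpha$. Radial compactness and concentration-compactness for HLS (the dilation is already fixed by $\|\nabla w_\varepsilon\|\sim1$) give $w_\varepsilon\to W_\rho$ in $L^2$ for some $\rho$. Passing to the limit in the Pohozaev-type balance then forces $\rho=\rho_0$; here (H4) and dominated convergence handle $g$, and upgrading to $H^1$ uses the equation.

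The pointwise estimate $u_\varepsilon(0)\sim\varepsilon^{\cdots}$ needs uniform $L^\infty$ bounds and decay for $w_\varepsilon$. I expect this to be the main obstacle, because $\frac{N+\alpha}{N}-2<0$ makes the nonlinearity singular near $0$ and the HLS-critical term resists standard bootstrap. I would first try a Kelvin transform together with a uniform comparison and barrier argument as in \cite{MM}, using the $L^2$ convergence to control the nonlocal potential $I_\alpha*|w_\varepsilon|^{\frac{N+\alpha}{N}}$ uniformly. Once that is done, the monotonicity of the radial solution gives $u_\varepsilon(0)=\|u_\varepsilon\|_\infty$, and undoing the scalings produces the stated exponents.
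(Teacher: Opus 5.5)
Your outline has the paper's overall structure: an $L^2$-invariant rescaling, an energy upper bound by testing with $W_\rho$ and optimizing in $\rho$, an $H^1$ bound from the Poho\v{z}aev identity plus interpolation, and convergence to an HLS optimizer. The decisive part of Step 3, however, fails as stated. You pass to the limit in the Poho\v{z}aev balance \eqref{e45} knowing only $w_\varepsilon\to W_\rho$ in $L^2$ and $\nabla w_\varepsilon\rightharpoonup\nabla W_\rho$. Lower semicontinuity then gives only $\|\nabla W_\rho\|_2^2\le\frac{a[Nq-(N+\alpha)]}{2q}\int_{\mathbb R^N}(I_\alpha*W_\rho^{\frac{N+\alpha}{N}})W_\rho^{q}$, i.e.\ $\rho\ge\rho_0$. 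The paper's Lemma~\ref{lem47} also reaches only this inequality from \eqref{e45}. Nor can ``the equation'' upgrade weak to strong $D^{1,2}$ convergence: $-\Delta$ carries the factor $\varepsilon^\sigma\to0$, so testing with $w_\varepsilon-W_\rho$ controls only $\varepsilon^\sigma\|\nabla(w_\varepsilon-W_\rho)\|_2^2$.

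The missing idea is the energy comparison at order $\varepsilon^\sigma$ \emph{with sharp constants}. Write $Q(w)=\int_{\mathbb R^N}(I_\alpha*|w|^{\frac{N+\alpha}{N}})|w|^q$ and $g_0(\rho)=\frac aqQ(W_\rho)-\frac12\|\nabla W_\rho\|_2^2$.
\begin{itemize}
\item Testing with $W_{\rho_0}$ as in \eqref{e421} gives $\tilde m_\varepsilon\le\tilde m_0-\varepsilon^\sigma g_0(\rho_0)+o(\varepsilon^\sigma)$.
\item Project $w_\varepsilon$ onto $\mathcal N_0$ by the dilation with $t_\varepsilon=\tau(w_\varepsilon)^{1/\alpha}\to1$; this needs the $H^1$ bound and $\tau(w_\varepsilon)=1+O(\varepsilon^\sigma)$, cf.\ Lemma~\ref{lem43}. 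Since $t\mapsto J_\varepsilon((w_\varepsilon)_t)$ is maximal at $t=1$, this gives $\tilde m_0\le\tilde m_\varepsilon+\varepsilon^\sigma\big[\frac aqQ(w_\varepsilon)-\frac12\|\nabla w_\varepsilon\|_2^2\big]+o(\varepsilon^\sigma)$.
\end{itemize}
Divide by $\varepsilon^\sigma$ and let $\varepsilon_n\to0$: you get $g_0(\rho_0)\le\frac aqQ(W_\rho)-\frac12\lim\|\nabla w_{\varepsilon_n}\|_2^2\le g_0(\rho)$. Since $\rho_0$ is the unique maximizer of $g_0$, this forces $\rho=\rho_0$ and $\|\nabla w_{\varepsilon_n}\|_2\to\|\nabla W_{\rho_0}\|_2$, hence $H^1$ convergence of the whole family.

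The same two inequalities are what Step 2 lacks. Interpolation (HLS plus Gagliardo--Nirenberg) bounds $Q(w_\varepsilon)$ only from above, so it yields only $\|\nabla w_\varepsilon\|_2\lesssim1$. The lower bounds $Q(w_\varepsilon)\gtrsim1$ and $\|\nabla w_\varepsilon\|_2\gtrsim1$ come from $\varepsilon^\sigma g_0(\rho_0)+o(\varepsilon^\sigma)\le\tilde m_0-\tilde m_\varepsilon\le\varepsilon^\sigma\frac aqQ(w_\varepsilon)+o(\varepsilon^\sigma)$, as in Lemma~\ref{lem45}. These lower bounds are equivalent to the stated order of $\xi_\varepsilon$, and they give non-vanishing of the weak limit, since $Q$ is weakly continuous on bounded radial sets. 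Once that is known, your concentration-compactness route to $L^2$ convergence is a legitimate alternative to the paper's Palais--Smale/Brezis--Lieb argument.

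Fixing $\xi_\varepsilon$ by $\|\nabla w_\varepsilon\|_2=\|\nabla W_{\rho_0}\|_2$ is circular. The coefficients of the rescaled equation then depend on the unknown $\xi_\varepsilon$, and $\xi_\varepsilon\sim\varepsilon^{-\frac{N(q-1)+\alpha}{\alpha(N+\alpha+4-Nq)}}$ is exactly $\|\nabla w_\varepsilon\|_2\sim1$ in the canonical scaling $\xi=\varepsilon^{-(1+\sigma)/2}$. In the $H^1$ bound itself, the terms containing $I_\alpha*\varepsilon_1^{-1}G(\varepsilon_2w_\varepsilon)$ carry powers of $\|\nabla w_\varepsilon\|_2$ above $2$ when $q>\frac{N+\alpha+2}{N}$. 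You must absorb them using $\varepsilon^\sigma\|\nabla w_\varepsilon\|_2^2\lesssim1$, which follows from $\tilde m_\varepsilon=\frac{2+\alpha}{2(N+\alpha)}\varepsilon^\sigma\|\nabla w_\varepsilon\|_2^2+\frac{\alpha}{2(N+\alpha)}\|w_\varepsilon\|_2^2\le\tilde m_0$.

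Smaller points:
\begin{itemize}
\item Your first rescaling is miscomputed: $v(x)=\varepsilon^{-\frac N{2\alpha}}u(x/\sqrt\varepsilon)$ gives $-\Delta v+v$ with a factor $\varepsilon^{-\alpha/2}$ on the HLS term. The consistent family is $\varepsilon^{-\frac N{2\alpha}}\xi^{\frac N2}u(\xi x)$, which yields $-\varepsilon^{-1}\xi^{-2}\Delta v+v$ with unit HLS coefficient for every $\xi>0$.
\item $W_1\in H^1(\mathbb R^N)$ for all $N\ge3$, so no cut-off is needed.
\item For positive solutions, $w^{\frac{N+\alpha}{N}-1}=w^{\alpha/N}$ is not singular. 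The real obstruction to uniform bounds near the origin is the degenerate factor $\varepsilon^\sigma$ in front of $-\Delta$.
\item You are right that $u_\varepsilon(0)\sim\varepsilon^{\frac{N(2+\alpha)}{\alpha(N+\alpha+4-Nq)}}$, i.e.\ $w_\varepsilon(0)\sim1$, needs such uniform local control; the paper infers $w_{\varepsilon_n}(0)\to W_\rho(0)$ from a.e.\ convergence alone. But your Kelvin-transform and barrier step is only announced, not carried out.
\end{itemize}
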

\smallskip 

\smallskip


Surprisingly, in Theorem \ref{t11}, we establish that if  $q\in (\frac{N+\alpha}{N},\frac{N+\alpha+4}{N})$, then as $\varepsilon \to 0$,  the rescaled family of ground states
$w_\varepsilon$  converges  to the extremal function  $W_{\rho_0}$ in $H^1(\mathbb R^N)$, rather than in $L^2(\mathbb  R^N)$, even if the limit equation is well--posed in $L^2(\mathbb R^N)$ only. We divide the  associated $(\alpha, q)$ region $\left\{(\alpha, q): \ \frac{N+\alpha}{N}<q<\frac{N+\alpha}{N-2},  \ 0<\alpha<N \right\}$ into two parts:  
$$
\mathcal A_N=\left\{(\alpha,q): \ \frac{N+\alpha+4}{N}\le q<\frac{N+\alpha}{N-2}\right\}.
$$

$$
\mathcal B_N=\left\{ (\alpha, q): \ \frac{N+\alpha}{N}<q<\min\{\frac{N+\alpha}{N-2},\frac{N+\alpha+4}{N}\} \right\},
$$
which are depicted in Fig. 1.  Then Theorem \ref{t12} and Theorem \ref{t11} apply to $\mathcal A_N$ and $\mathcal B_N$, respectively. 
\vspace{ 6cm}

\begin{figure}[h]
	\caption{The regimes in the $(\alpha,q)$ plane where Theorem \ref{t12} and Theorem \ref{t11} are applicable.}
\label{table1}
\end{figure}

We highlight some new findings in this paper. 
Firstly, we find a new  threshold exponent $q_0=\frac{N+\alpha+4}{N}$ in the case that $\alpha>N-4$. This new exponent is larger than the mass critical exponent $\frac{N+\alpha+2}{N}$ for the Choquard equations. When the subcritical perturbation $g(s)$ satisfies $g(s)\sim s^{q-1}$ as $s\to 0$, then the existence and non-existence of ground states are determined by the location of the exponent $q$ in $(\frac{N+\alpha}{N}, \frac{N+\alpha}{N-2})$. The exponent $q_0$ is critical in the sense that if $q\in (\frac{N+\alpha}{N}, \frac{N+\alpha+4}{N})$, then \eqref{e12} admits a ground state for any frequency $\varepsilon>0$, while if $q\in [\frac{N+\alpha+4}{N},\frac{N+\alpha}{N-2})$, then the existence of ground state depends heavily on the size of  frequency $\varepsilon>0$ and a threshold phenomena occur: there exists $\varepsilon_q>0$ such that \eqref{e12} has no ground state for $\varepsilon \in (0,\varepsilon_q)$ and admits a ground state for $\varepsilon \ge \varepsilon_q$ ($\varepsilon>\varepsilon_q$ in the case $q=\frac{N+\alpha+4}{N}$).  Similar threshold phenomena are known for critical combined Schr\"odinger equation \cite{WW2023} and lower critical Choquard equation  combined with a general local perturbation term \cite{MM2024}, but the correspoding critical exponents are totally different. In the former case similar critical exponent occur only in $N=3$ and $q_0$ equals to 4, and in  the latter case the corresponding  critical exponent $q_0$ equals to the mass critical exponent $2+\frac{4}{N}$ for the Schr\"odinger equation. 

Secondly, 
if $g(s)\simeq as^{q-1}$ as $s\to 0$ for some $q\in (\frac{N+\alpha}{N},\frac{N+\alpha+4}{N})$ and some $a>0$, 
then as $\varepsilon\to 0$, the limit asymptotic profiles of ground states are determined solely by the location of $(a, q)$ in $(0,\infty)\times (\frac{N+\alpha}{N}, \frac{N+\alpha+4}{N})$, which is somewhat a  striking fact. In order to incorporate lower-order terms caused by $g(u)$, we first use  a suitable rescaling  to transform
the equation \eqref{e12}  into the equation
$$
-\varepsilon^{\sigma}\Delta w+w=\left(I_{\alpha}*(|w|^{\frac{N+\alpha}{N}} +\varepsilon_1^{-1}G(\varepsilon_2w))\right)
\left(\frac{N+\alpha}{N}|w|^{\frac{N+\alpha}{N}-2}w
+\varepsilon_1^{-1}\varepsilon_2g(\varepsilon_2w)\right),
$$
where
$\sigma:=\frac{(2+\alpha)[Nq-(N+\alpha)]}{\alpha(N+\alpha+4-Nq)}>0$, $\varepsilon_1, \varepsilon_2=o_\varepsilon(1)$, and  satisfies $\varepsilon^\sigma=\varepsilon_1^{-1}\varepsilon_2^{q}$. 
Then we show that the rescaled family of ground state solutions $\{w_\varepsilon\}$ converges to a limit $w_0\neq 0$ in $L^2(\mathbb R^N)$. 
Generally, the convergence of $w_\varepsilon$ to $w_0$ in $D^{1,2}(\mathbb R^N)$ is not expected. 
To show the existence of a rescaling which ensures convergence in $H^1(\mathbb R^N)$ (rather than only in $L^2(\mathbb R^N)$) to a particular solution of a limit equation, we employ the identity
\begin{equation}\label{e113}
\|\nabla w_{\varepsilon}\|_2^2=\frac{a[Nq-(N+\alpha)]}{2q}
\int_{\mathbb R^N}(I_{\alpha}*|w_{\varepsilon}|^{\frac{N+\alpha}{N}})|w_{\varepsilon}|^{q}+\mathcal{R_\varepsilon}.
\end{equation}
If $ \mathcal R_\varepsilon=o_\varepsilon(1)$, then the identity \eqref{e113}  allows us to adjust $w_\varepsilon$ by introducing a new $L^2$-norm preserving rescaling $\tilde w_\varepsilon(x)=\rho_\varepsilon^{N/2}w_\varepsilon(\rho_\varepsilon x)$ with a suitable $\rho_\varepsilon>0$, in such a way that the weak limit $\tilde w_0$ of $\tilde w_\varepsilon$ satisfies $\lim_{\varepsilon\to 0}\|\nabla \tilde w_\varepsilon\|_2=\|\nabla \tilde w_0\|_2$. 
This implies that $\tilde w_\varepsilon\to \tilde w_0$ in $D^{1,2}(\mathbb R^N)$, and hence in $H^1(\mathbb R^N)$.
We remark that proving the boundedness of $\{w_\varepsilon\}$  in $H^1(\mathbb R^N)$ is a main component in adopting this technique. However,  since $\mathcal R_\varepsilon\not=0$ due to the presence of the cross terms, the proof of this issue becomes much more involved  and some new techniques are required. In the present paper we first show that $\|w_\varepsilon\|_2^2$ and $\varepsilon^\sigma \|\nabla w_\varepsilon\|_2^2$ are bounded. By Hardy-Littlewood-Sobolev and Sobolev inequalities, we also show that  $\mathcal R_\varepsilon\lesssim \sum_{i=1}^k\varepsilon^{\tau_i\sigma}\|\nabla w_\varepsilon\|_2^{\beta_i}$, with $\tau_i>0$ and $\beta_i>0$.  Therefore, it follows from \eqref{e113} that 
\begin{equation}\label{e114}
\|\nabla w_\varepsilon\|_2^2\lesssim \|\nabla w_\varepsilon\|_2^{\frac{Nq-(N+\alpha)}{2}}+ \sum_{i=1}^k\varepsilon^{\tau_i\sigma}\|\nabla w_\varepsilon\|_2^{\beta_i}.
\end{equation}
Since $\beta_i>2$ for some $i\in \{1,2,\cdots, k\}$, we can not conclude the boundedness of $\|\nabla w_\varepsilon\|_2$ from \eqref{e114}.  To overcome this difficulty, we find some numbers $\tilde \tau_i\in [0,\tau_i)$ such that $0\le \beta_i-2\tilde \tau_i\le 2$,   and then by the boundedness of $\varepsilon^\sigma \|\nabla w_\varepsilon\|_2^2$ we obtain
$\mathcal R_\varepsilon\lesssim \sum_{i=1}^k \varepsilon^{(\tau_i-\tilde \tau_i)\sigma}\|\nabla w_\varepsilon\|_2^{\beta_i-2\tilde \tau_i}$. This estimate of  the remainder term $\mathcal R_\varepsilon$ allows us  to obtain $\|\nabla w_\varepsilon\|_2^2\lesssim \|\nabla w_\varepsilon\|_2^{\frac{Nq-(N+\alpha)}{2}}+ \sum_{i=1}^k\varepsilon^{(\tau_i-\tilde\tau_i)\sigma}\|\nabla w_\varepsilon\|_2^{\beta_i-2\tilde \tau_i}$  by using  \eqref{e113}, which  together with the fact that  $\frac{Nq-(N+\alpha)}{2}<2$ implies the boundedness of $\|\nabla w_\varepsilon\|_2$. See Lemma \ref{lem42} below for details. We remark that  a similar technique is also used in the proof of Lemma \ref{lem43}.
This reduction technique seems to be new and should be applicable in other related context.  

This paper is organized as follows. In section 2, we present some preliminary
lemmas. Then we prove our main results, Theorem \ref{t12} and Theorem \ref{t11}, in section 3 and section 4, respectively.  Finally, in the last section, we discuss briefly the asymptotic behavior of ground states as $\varepsilon\to \infty$. 

Throughout this paper, we assume $N\geq 3$. 
$B_r$ denotes the ball in $\mathbb R^N$ with radius $r>0$ and centered at the origin,  $|B_r|$ and $B_r^c$ denote its Lebesgue measure  and its complement in $\mathbb R^N$, respectively.

$L^p(\mathbb{R}^N)$ with $1\leq p<\infty$ is the Lebesgue space
with the norm
$\|u\|_p=\left(\int_{\mathbb{R}^N}|u|^p\right)^{1/p}$.

$ H^1(\mathbb{R}^N)$ is the usual Sobolev space with norm
$\|u\|_{H^1(\mathbb{R}^N)}=\left(\int_{\mathbb{R}^N}|\nabla
u|^2+|u|^2\right)^{1/2}$. 

$ D^{1,2}(\mathbb{R}^N)=\{u\in
L^{2^*}(\mathbb{R}^N): |\nabla u|\in
L^2(\mathbb{R}^N)\}$. 

\noindent
We denote by $c,C$, various positive constants.
For any $s>0$ and two nonnegative functional $f(s)$ and $g(s)$, we write 
\begin{itemize}
\item[(i)] $f(s)\lesssim g(s)$ or $g(s)\gtrsim f(s)$ if there exists a positive constant $C$ independent of $s$ such that
$f(s)\leq Cg(s)$.
\item[(ii)] $f(s) \backsim g(s)$ if $f(s)\lesssim g(s)$ and $f(s)\gtrsim g(s)$.
\end{itemize}
If $|f(s)|\lesssim |g(s)|$, we write $f(s)=O((g(s))$.  Finally, if $\lim f(s)/g(s)=1$, then we write $f(s)\simeq g(s)$.



\smallskip


\section{Preliminary}

In order to prove our results, we give some lemmas, which are useful for the subsequent proof. First, we give the well known Hardy-Littlewood-Sobolev inequality, which can be found in \cite{LL2001}.
\begin{lemma}\label{lem21}
Let $p$, $r>1$ and $0<\alpha<N$ with $1/p+(N-\alpha)/N+1/r=2$. Let $u \in L^{p}(\mathbb R^N)$ and $v \in L^{r}(\mathbb R^N)$. Then there exists a sharp constant
$C(N,\alpha,p)$, independent of $u$ and $v$, such that
$$\left|\int_{\mathbb R^N}\int_{\mathbb R^N}\frac{u(x)v(y)}{|x-y|^{N-\alpha}}\right|\leq C(N,\alpha,p)\|u\|_{p}\|v\|_{r}.$$
If $p=r=\frac{2N}{N+\alpha}$, then
$$C(N,\alpha,p)=C_{\alpha}(N)=\pi^{\frac{N-\alpha}{2}}
\frac{\Gamma(\frac{\alpha}{2})}{\Gamma(\frac{N+\alpha}{2})}\left\{\frac{\Gamma(\frac{N}{2})}{\Gamma(N)}\right\}^{-\frac{\alpha}{N}}.$$
\end{lemma}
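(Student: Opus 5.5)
The plan is to prove the inequality in two stages: first the existence of some finite constant for all admissible $(p,r)$, then the identification of the sharp constant in the diagonal case $p=r=\frac{2N}{N+\alpha}$. Since replacing $u,v$ by $|u|,|v|$ only increases the left-hand side, I assume $u,v\ge 0$ throughout. The relation $1/p+(N-\alpha)/N+1/r=2$ can be rewritten as $1-\frac1r=\frac1p-\frac{\alpha}{N}$, which forces $1<p<\frac N\alpha$. By H\"older's inequality it therefore suffices to prove the Riesz potential bound
$$\Bigl\|\,|x|^{-(N-\alpha)}*u\Bigr\|_{s}\lesssim \|u\|_p,\qquad \frac1s=\frac1p-\frac{\alpha}{N},$$
because $s=r'$.

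For this bound I would use Hedberg's pointwise argument. Fix $x$ and a radius $R>0$, and split the convolution integral into $|x-y|<R$ and $|x-y|\ge R$.
\begin{itemize}
\item On the near part, decomposing into dyadic annuli gives a bound of $C R^{\alpha}\,Mu(x)$, where $M$ is the Hardy--Littlewood maximal function.
\item On the far part, H\"older's inequality gives $C R^{\alpha-N/p}\|u\|_p$; the kernel is $L^{p'}$-integrable there precisely because $p<N/\alpha$.
\end{itemize}
Optimizing in $R$ yields
$$|x|^{-(N-\alpha)}*u(x)\lesssim (Mu(x))^{1-p\alpha/N}\,\|u\|_p^{p\alpha/N}.$$
Raising this to the power $s$ and using $s(1-p\alpha/N)=p$ together with the strong $(p,p)$ maximal inequality (valid since $p>1$) gives the claimed $L^s$ bound. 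This settles the non-sharp statement for every admissible pair $(p,r)$.

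For the sharp constant at $p=r=\frac{2N}{N+\alpha}$ I would follow Lieb's approach, in four steps.
\begin{enumerate}
\item The Riesz rearrangement inequality shows that the quotient does not decrease under symmetric decreasing rearrangement, so one may restrict to radial nonincreasing $u=v$.
\item An optimizer exists. For such radial functions the bound $u(x)\lesssim |x|^{-N/p}\|u\|_p$ holds, and Helly selection applied to a maximizing sequence normalized modulo dilations gives one.
\item The functional is invariant under translations, dilations and the inversion $u\mapsto |x|^{-(N+\alpha)}u(x/|x|^2)$, that is, under the conformal group. Transferring the problem to $S^N$ by stereographic projection, I would use the Carlen--Loss competing symmetries argument: alternately apply rearrangement and a fixed conformal rotation. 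This shows the maximizers are exactly $c\,(1+|x-x_0|^2/\lambda^2)^{-(N+\alpha)/2}$, equivalently constants on the sphere.
\item Evaluating the quotient on $h(x)=(1+|x|^2)^{-(N+\alpha)/2}$, or on the constant function on $S^N$ using the known convolution of the kernel with $1$ there, produces the stated formula for $C_\alpha(N)$ through Beta/Gamma function integrals.
\end{enumerate}

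The main obstacle is step 3 of this list, the classification of the optimizers. The existence of some constant and the reduction by rearrangement are routine, but pinning down the extremals needs either the full conformal-invariance machinery or a uniqueness result for the Euler--Lagrange equation. Since the paper only cites \cite{LL2001}, I would at that point invoke Lieb's theorem as established there rather than reproduce the competing symmetries argument in detail.
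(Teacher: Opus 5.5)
Your proposal is correct. The paper itself does not prove this lemma: it quotes it from Lieb--Loss \cite{LL2001}. Its later arguments only use the inequality with \emph{some} constant, for instance in the Riesz-potential form of the subsequent Remark, and never the value of $C_\alpha(N)$.

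Your first stage differs from the proof in that reference. Lieb--Loss obtain the general inequality from the layer-cake representations of $u$, $v$ and $|x|^{-(N-\alpha)}$, which reduces everything to an elementary bound on $\int\int\chi_A(x)\chi_{\{|x-y|<c\}}\chi_B(y)\,dx\,dy$. You instead use Hedberg's pointwise bound together with the maximal theorem. Your route is shorter and gives directly the strong-type estimate $\|I_\alpha*u\|_{r'}\lesssim\|u\|_p$ that the paper actually uses, at the cost of importing the Hardy--Littlewood maximal theorem. The layer-cake proof is self-contained and sits naturally next to the rearrangement tools needed for the sharp case.

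For the sharp constant, your outline is the argument of the cited reference: Riesz rearrangement, invariance under $u\mapsto|x|^{-(N+\alpha)}u(x/|x|^2)$, Carlen--Loss competing symmetries, and evaluation on $h$. Deferring to it at that point is exactly what the paper does.

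Three minor corrections to the sketch:
\begin{enumerate}
\item Rearrangement alone does not let you take $u=v$. You need the Cauchy--Schwarz inequality for the positive definite form $\int\int u(x)|x-y|^{-(N-\alpha)}v(y)\,dx\,dy$. The Fourier transform of the kernel is a positive multiple of $|\xi|^{-\alpha}$, and $p=r$ puts $u,v$ in the same space. Alternatively, run the competing-symmetries iteration on $u$ and $v$ simultaneously.
\item Your step 2 can be dropped. Helly selection by itself only produces a pointwise limit and does not rule out loss of mass; Lieb's existence proof needs a further compactness argument. Competing symmetries needs no existence result: with $R$ the symmetric decreasing rearrangement and $U$ the conformal rotation, $(RU)^n u\to\|u\|_p\,h/\|h\|_p$ in $L^p$. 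Combined with the $L^p$-continuity of the form, which your first stage supplies, this already shows that $h$ is a maximizer.
\item The lemma only asserts the value of the constant. You therefore need only that $h$ is a maximizer, not that the maximizers are ``exactly'' its conformal images. That classification requires an additional strictness argument beyond competing symmetries.
\end{enumerate}
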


\begin{remark}
By the Hardy-Littlewood-Sobolev inequality, for any $v\in L^{s}(\mathbb R^N)$ with
$s \in (1,\frac{N}{\alpha})$, $I_{\alpha}*v \in L^{\frac{Ns}{N-\alpha s}}(\mathbb R^N)$ and
$$\|I_{\alpha}*v\|_{\frac{Ns}{N-\alpha s}}\leq A_{\alpha}(N)C(N,\alpha,s)\|v\|_s.$$
\end{remark}

The Gagliardo-Nirenberg inequality can be seen in \cite{Weinstein}.
\begin{lemma}\label{lem23}
Let $N \geq 1$, $2< s <2^*$ and $\gamma_s =\frac{N(s-2)}{2s} $, then the following sharp inequality
$$\| u \|_s \leq C_{N,s}\| u\|_2^{1-\gamma_s} \| \nabla u\|_2^{\gamma_s}    $$
holds for any $u \in H^1(\mathbb R^N)$, where the constant $C_{N,s}=\left( \frac{s}{  2\| Q_s \|_2^{s-2}  }  \right)^{\frac{1}{s}}$
and $Q_s$ is the unique positive solution of
\begin{equation}\label{equ:Q sovles equ}
  -\frac{N(s-2)}{4} \Delta Q_s + \frac{2N-s(N-2)}{4} Q_s=|Q_s|^{s-2}Q_s, \ \ x\in \mathbb{R}^N.
\end{equation}
\end{lemma}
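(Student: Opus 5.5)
The plan has two parts: first prove the inequality with some constant, then identify the optimal constant through a variational problem whose minimizer is $Q_s$.

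\textbf{Inequality with some constant.} For $N\ge 3$, interpolate by H\"older between $L^2$ and $L^{2^*}$:
$$\|u\|_s\le \|u\|_2^{1-\theta}\|u\|_{2^*}^{\theta},\qquad \frac1s=\frac{1-\theta}{2}+\frac{\theta}{2^*}.$$
Solving gives $\theta=\frac{N(s-2)}{2s}=\gamma_s$. The Sobolev inequality then bounds $\|u\|_{2^*}$ by $\|\nabla u\|_2$. For $N=1,2$ (where $2^*=\infty$), I would instead use the elementary bound $\|u\|_\infty^2\le\|u\|_2\|u'\|_2$ in dimension one, or the Ladyzhenskaya-type estimate in dimension two, together with interpolation. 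This step is routine.

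\textbf{Variational problem.} To get the sharp constant, I consider the Weinstein functional
$$J(u)=\frac{\|u\|_2^{s(1-\gamma_s)}\|\nabla u\|_2^{s\gamma_s}}{\|u\|_s^s},\qquad u\in H^1(\mathbb R^N)\setminus\{0\},$$
so that $C_{N,s}^{s}=1/\inf J$. The functional $J$ is invariant under $u\mapsto\lambda u(\mu x)$. Hence a minimizing sequence can be normalized to $\|u_n\|_2=\|\nabla u_n\|_2=1$. By Schwarz symmetrization (P\'olya--Szeg\H{o}), it can also be taken radially decreasing.

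\textbf{Compactness.} This is the main obstacle. For $N\ge2$, Strauss' compact embedding $H^1_{rad}\hookrightarrow L^s$ for $2<s<2^*$ gives strong $L^s$ convergence of a subsequence to some $u^*$. Then $\|u^*\|_s^s=1/\inf J>0$, and weak lower semicontinuity of the two norms in the numerator shows that $u^*$ is a minimizer. For $N=1$, where radial compactness fails, I would use the uniform decay of symmetric decreasing functions, or Lions' concentration-compactness, to exclude vanishing and dichotomy.

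\textbf{Euler--Lagrange equation and the constant.} The minimizer satisfies an equation $-A\Delta u^*+Bu^*=C|u^*|^{s-2}u^*$ with positive constants $A,B,C$. Rescaling in amplitude and space turns it into the equation for $Q_s$ with coefficients $a=\frac{N(s-2)}{4}$ and $b=\frac{2N-s(N-2)}{4}$. Write $T=\|\nabla Q\|_2^2$, $M=\|Q\|_2^2$ and $P=\|Q\|_s^s$. Testing the equation with $Q$ gives $aT+bM=P$. The Pohozaev identity gives $\frac{a(N-2)}2T+\frac{bN}2M=\frac NsP$. Since $a+b=\frac s2$, solving these two relations yields $T=M$ and $P=\frac s2M$. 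Consequently
$$J(Q)=\frac{M^{s/2}}{P}=\frac{2}{s\,\|Q_s\|_2^{s-2}},$$
which is exactly $C_{N,s}^{s}=\frac{s}{2\|Q_s\|_2^{s-2}}$.

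\textbf{Identifying the minimizer with $Q_s$.} This needs the uniqueness of positive solutions of the $Q_s$ equation, which I would quote from Kwong's theorem. A positive minimizer exists because $|u|$ is a minimizer whenever $u$ is.
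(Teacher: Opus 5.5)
Your proposal is correct and is essentially Weinstein's variational argument: normalize via the scaling invariance, symmetrize, use compactness of radially decreasing functions, then apply the Nehari and Poho\v{z}aev identities and Kwong's uniqueness. This is exactly what the paper relies on, since it states this lemma without proof and only cites Weinstein. One algebra slip needs fixing: $T=M$ and $P=\frac{s}{2}M$ give $J(Q_s)=M^{s/2}/P=\frac{2}{s}\|Q_s\|_2^{s-2}$, not $\frac{2}{s\|Q_s\|_2^{s-2}}$, and it is $C_{N,s}^{s}=1/J(Q_s)$ that equals $\frac{s}{2\|Q_s\|_2^{s-2}}$.
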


Arguing as  in \cite[Theorem 2.1]{LM2019} it is easy to show that any weak solution of \eqref{e12} in
$H^1(\mathbb{R}^N)$ has additional regularity properties, which
allows us to establish the Poho\v{z}aev identity for all finite
energy solutions.

\smallskip

\begin{lemma}\label{lem24}
 Assume {\bf (H1)} holds.  If
$u\in H^1(\mathbb{R}^N)$ is a solution of \eqref{e12}, then $u\in
W_{\mathrm{loc}}^{2,r}(\mathbb{R}^N)$ for every $r>1$. Moreover, $u$
satisfies the Poho\v{z}aev identity
\begin{equation}\label{e22}
\begin{array}{rcl}
P_{\varepsilon}^0(u):&=&\frac{N-2}{2}\int_{\mathbb{R}^N}|\nabla
u|^2+\frac{N\varepsilon}{2}\int_{\mathbb{R}^N}|u|^2\\
&\mbox{}&
-\frac{N+\alpha}{2}\int_{\mathbb R^N}(I_{\alpha}*(|u|^{\frac{N+\alpha}{N}}
+G(u)))(|u|^{\frac{N+\alpha}{N}}+G(u))dx=0.
\end{array}
\end{equation}
\end{lemma}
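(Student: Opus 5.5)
We follow the approach of \cite[Theorem 2.1]{LM2019}: first establish local regularity by a bootstrap built on the Hardy--Littlewood--Sobolev inequality, then derive the Poho\v{z}aev identity by testing the equation against a truncated dilation field $x\cdot\nabla u$ and passing to the limit.

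\emph{Regularity.} Write the equation as $-\Delta u+\varepsilon u=V(x)\,h(u)$, where $V=I_\alpha*F(u)$ and $h(u)=F'(u)=\frac{N+\alpha}{N}|u|^{\frac{\alpha}{N}-1}u+g(u)$ (with $g$ extended oddly, or $u\ge0$). By {\bf (H1)}, for every $\delta>0$ we have $|F(s)|\le C_\delta(|s|^{\frac{N+\alpha}{N}}+|s|^{\frac{N+\alpha}{N-2}})$ and $|h(s)|\le C(|s|^{\frac{\alpha}{N}}+|s|^{\frac{\alpha+2}{N-2}})$. Since $u\in L^2\cap L^{2^*}$, we get $F(u)\in L^{\frac{2N}{N+\alpha}}$, and the remark after Lemma \ref{lem21} gives $V\in L^{\frac{2N}{N-\alpha}}$. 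The upper-critical exponent $\frac{N+\alpha}{N-2}$ and the lower-critical exponent $\frac{N+\alpha}{N}$ are both borderline for a naive bootstrap. We therefore use the splitting of Moroz--Van Schaftingen: decompose $V h(u)/u$ as $H+K$, where $H$ is arbitrarily small in $L^{\frac{2N}{\alpha}}+L^{\frac{2N}{\alpha+2}}$-type norms and $K$ lies in better spaces. A Brezis--Kato type lemma adapted to nonlocal terms (the one proved in \cite{MS2015} / \cite{LM2019}) then gives $u\in L^p$ for all $p\in[2,\infty)$. With this, $V\in L^\infty$ and $h(u)\in L^r_{\rm loc}$ for every $r$ (note $|u|^{\alpha/N}$ is locally bounded once $u\in L^\infty_{\rm loc}$). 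Calder\'on--Zygmund estimates then yield $u\in W^{2,r}_{\rm loc}$ for all $r>1$.

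\emph{Poho\v{z}aev identity.} Take a cutoff $\varphi\in C_c^\infty$ with $\varphi=1$ near $0$, set $\varphi_\lambda(x)=\varphi(\lambda x)$, and test the equation with $\varphi_\lambda\, x\cdot\nabla u$, which is admissible by the $W^{2,r}_{\rm loc}$ regularity. The local terms produce $-\frac{N-2}{2}\int|\nabla u|^2-\frac{N\varepsilon}{2}\int u^2$ plus error terms controlled by $\int_{|x|\sim 1/\lambda}(|\nabla u|^2+u^2)$, and these errors vanish as $\lambda\to0$.

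For the nonlocal term we use $h(u)\nabla u=\nabla F(u)$. Symmetrizing the double integral in $x$ and $y$ gives
\[
\int (I_\alpha*F(u))\,\varphi_\lambda\, x\cdot\nabla F(u)
=-\frac{N}{2}\iint\ldots+\frac{N-\alpha}{2}\iint\ldots .
\]
More precisely, one obtains $-\frac{N+\alpha}{2}\int (I_\alpha*F(u))F(u)$ plus terms involving $\varphi_\lambda(x)-\varphi_\lambda(y)$. These extra terms are dominated by $\iint I_\alpha(x-y)|F(u(x))||F(u(y))|$, since the relevant factor $\frac{(\varphi_\lambda(x)x-\varphi_\lambda(y)y)\cdot(x-y)}{|x-y|^2}$ is uniformly bounded. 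By dominated convergence, using $F(u)\in L^{\frac{2N}{N+\alpha}}$ and HLS, they converge to the right limit as $\lambda\to0$. Combining the local and nonlocal limits gives \eqref{e22}.

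\emph{Main obstacle.} The hard step is the bootstrap. Because of the lower-critical term $|u|^{\frac{\alpha}{N}}$ and the upper-critical growth permitted by {\bf (H1)}, neither endpoint yields any gain by plain H\"older estimates. The smallness in {\bf (H1)} (the limits are $0$, not merely finite) is exactly what lets us write $F$ and $h$ as (small critical part)$+$(subcritical part), so that the nonlocal Brezis--Kato argument applies. We would reproduce that splitting carefully and otherwise cite \cite{LM2019}.
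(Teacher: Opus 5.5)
Your outline follows the same route as the paper. The paper gives no argument of its own and refers to \cite[Theorem 2.1]{LM2019}, which is the Moroz--Van Schaftingen scheme: a nonlocal Brezis--Kato bootstrap, then $I_\alpha*F(u)\in L^\infty$, then Calder\'on--Zygmund estimates, then testing with $\varphi(\lambda x)\,x\cdot\nabla u$ and passing to the limit using the bounded symmetrized kernel and dominated convergence. Your Poho\v{z}aev part is correct. The regularity paragraph, however, misdescribes the mechanism. Taken literally, the splitting you propose would not go through, although the lemma you cite does the job once it is applied correctly.

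First, $Vh(u)/u$ is the wrong object to split. It contains $\frac{N+\alpha}{N}V|u|^{\frac{\alpha}{N}-1}$, which blows up wherever $u$ is small, including at infinity. So there is no reason for it to lie in $L^{N/2}+L^\infty$, and no Brezis--Kato argument with this potential is available. The lemma of \cite{MS2015} is applied to the equation written as $-\Delta u+\varepsilon u=(I_\alpha*(Hu))K$, with $H=F(u)/u$ and $K=F'(u)$. Both lie in $L^{\frac{2N}{\alpha}}+L^{\frac{2N}{\alpha+2}}$ by the growth bounds and $u\in L^2\cap L^{2^*}$.

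Second, {\bf (H1)} gives no smallness at the lower end, because $F$ contains $|u|^{\frac{N+\alpha}{N}}$ with coefficient $1$. So a splitting of $F$ and $h$ into ``small critical part $+$ subcritical part'' obtained from {\bf (H1)} does not exist there. None is needed. In the key estimate $\int(I_\alpha*(H|\psi|^{\theta}))K|\psi|^{2-\theta}\le\delta\|\nabla\psi\|_2^2+C_\delta\|\psi\|_2^2$, with $\frac{\alpha}{N}<\theta<2-\frac{\alpha}{N}$, the $L^{\frac{2N}{\alpha}}$ components are bounded by $C\|\psi\|_2^2$ via HLS. The smallness of the $L^{\frac{2N}{\alpha+2}}$ components comes from truncating the fixed functions $H,K$, moving the bounded part into $L^{\frac{2N}{\alpha}}$. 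Only the bound $|F(s)|+|sF'(s)|\lesssim|s|^{\frac{N+\alpha}{N}}+|s|^{\frac{N+\alpha}{N-2}}$ is used; the vanishing limits in {\bf (H1)} play no role in this lemma.

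Third, the lemma yields $u\in L^p$ only for $2\le p<\frac{N}{\alpha}\frac{2N}{N-2}$, not for all $p<\infty$. This range is enough. It gives $F(u)\in L^s$ for $\frac{2N}{N+\alpha}\le s<\frac{N}{\alpha}\frac{2N}{N+\alpha}$, an interval with $\frac{N}{\alpha}$ in its interior, so $I_\alpha*F(u)\in L^\infty$ by splitting the kernel near and far. The local bootstrap then applies, since $\frac{\alpha+2}{N-2}<\frac{N+2}{N-2}$, and gives $u\in W^{2,r}_{\rm loc}$ for every $r>1$.
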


It is well known that any weak solution of  \eqref{e12} corresponds to a critical point of the action  functionals $I_{\varepsilon}$ defined in \eqref{e14},
which is well defined and is of $C^1$ in $H^1(\mathbb R^N)$.  A nontrivial solution $u_\varepsilon\in H^1(\mathbb R^N)$  is called a ground state if
\begin{equation}\label{e23}
I_\varepsilon(u_{\varepsilon})=m_{\varepsilon}:=\inf\{ I_{\varepsilon}(u): \ u\in H^1(\mathbb R^N)\setminus \{0\} \ {\rm and}  \  I'_{\varepsilon}(u)=0\}.
\end{equation}
In \cite{LMZ2019, LM2019} (see also the proof of the main results in \cite{LM2019}),  it has been shown that
\begin{equation}\label{e24}
m_{\varepsilon}=\inf_{u\in \mathcal N_{\varepsilon}^0}I_{\varepsilon}(u)=\inf_{u\in \mathcal P_{\varepsilon}^0}I_{\varepsilon}(u),
\end{equation}
where $\mathcal N_{\varepsilon}^0$ and $\mathcal P_{\varepsilon}^0$ are the correspoding Nehari  and Poho\v{z}aev manifolds defined by
$$
\mathcal N_{\varepsilon}^0:=\left\{ u\in H^1(\mathbb R^N)\setminus\{0\}  \ \left | \ \begin{array}{rl}
&\int_{\mathbb R^N}|\nabla u|^2+\varepsilon |u|^2\\
&=\int_{\mathbb R^N}(I_{\alpha}*(|u|^{\frac{N+\alpha}{N}}+G(u))) (\frac{N+\alpha}{N}|u|^{\frac{N+\alpha}{N}}+g(u)u)dx \end{array}\right.\right\}
$$
and
$$
\mathcal P_{\varepsilon}^0:=\left\{ u\in H^1(\mathbb R^N)\setminus\{0\}  \ \left | \ P_{\varepsilon}^0(u)=0 \right. \right\},
$$
respectively.

In the following, we describe the following minimax characterizations for the least energy $m_{\varepsilon}$.
\begin{lemma}\label{lem25}
Let
$$u_{t}(x)=
\left\{
\begin{aligned}
&u \left(\frac{x}{t}\right),
\ & if& \ t>0,\\
&0,
\ & if&\ t=0,\\
\end{aligned}
\right.$$
then
$$m_{\varepsilon}=\inf_{u\in H^{1}(\mathbb R^N)\backslash\{0\}}\sup_{t\geq 0}I_{\varepsilon}(tu)=\inf_{u\in H^{1}(\mathbb R^N)\backslash\{0\}}\sup_{t\geq 0}I_{\varepsilon}(u_t).$$
In particular, if $u_\varepsilon$ is a ground state, then we have $m_{\varepsilon}=I_{\varepsilon}(u_{\varepsilon})=\sup_{t>0}I_{\varepsilon}(tu_{\varepsilon})=\sup_{t>0}I_{\varepsilon}((u_{\varepsilon})_t)$.
\end{lemma}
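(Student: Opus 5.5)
We prove the two minimax characterizations of Lemma \ref{lem25} by comparing them with the Nehari and Poho\v{z}aev characterizations \eqref{e24}, i.e. $m_\varepsilon=\inf_{\mathcal N^0_\varepsilon}I_\varepsilon=\inf_{\mathcal P^0_\varepsilon}I_\varepsilon$. For the dilation path we compute, for fixed $u\ne0$,
$$
I_\varepsilon(u_t)=\frac{t^{N-2}}{2}\|\nabla u\|_2^2+\frac{\varepsilon t^N}{2}\|u\|_2^2-\frac{t^{N+\alpha}}{2}\int_{\mathbb R^N}(I_\alpha*F(u))F(u).
$$
Since $F(u)=|u|^{\frac{N+\alpha}{N}}+G(u)>0$ for $u\ne0$ (by {\bf (H1)}, $G\ge0$), the last coefficient is positive. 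As $N+\alpha>N>N-2$, the function $t\mapsto I_\varepsilon(u_t)$ is positive for small $t>0$ and tends to $-\infty$ as $t\to\infty$.

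The derivative is $\frac{d}{dt}I_\varepsilon(u_t)=t^{-1}P^0_\varepsilon(u_t)$. Dividing by $t^{N-3}$, the equation $\frac{d}{dt}I_\varepsilon(u_t)=0$ becomes
$$
\frac{N-2}{2}a+\frac{N\varepsilon}{2}bt^2=\frac{N+\alpha}{2}ct^{2+\alpha},
$$
with $a,b,c>0$. Dividing further by $t^2$, the left side $\frac{N-2}{2}a\,t^{-2}+\frac{N\varepsilon}{2}b$ is strictly decreasing and the right side $\frac{N+\alpha}{2}c\,t^{\alpha}$ is strictly increasing. So there is a unique $t_u>0$ with $u_{t_u}\in\mathcal P^0_\varepsilon$, and it is the global maximizer: $\sup_{t\ge0}I_\varepsilon(u_t)=I_\varepsilon(u_{t_u})$. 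This gives
$$
\inf_{u\ne0}\sup_{t\ge0} I_\varepsilon(u_t)\ \ge\ \inf_{\mathcal P^0_\varepsilon}I_\varepsilon=m_\varepsilon .
$$
Conversely, if $u\in\mathcal P^0_\varepsilon$, uniqueness forces $t_u=1$, so $\sup_t I_\varepsilon(u_t)=I_\varepsilon(u)$, and taking the infimum gives equality.

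For the fiber $t\mapsto I_\varepsilon(tu)$ the step that needs care is uniqueness of the critical point. We have $\frac{d}{dt}I_\varepsilon(tu)=t^{-1}\langle I'_\varepsilon(tu),tu\rangle$, and $tu\in\mathcal N^0_\varepsilon$ exactly when
$$
\|\nabla u\|_2^2+\varepsilon\|u\|_2^2=\int_{\mathbb R^N}\Big(I_\alpha*\frac{F(tu)}{t}\Big)\frac{f(tu)tu}{t}.
$$
It suffices that the right side is strictly increasing in $t$ and tends to $0$ as $t\to0$ and to $\infty$ as $t\to\infty$. Under {\bf (H3)}, $G(s)/s^{\frac{N+\alpha}{N}}$ is nondecreasing, so $F(ts)/t^{\frac{N+\alpha}{N}}$ is nondecreasing in $t$. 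We also need $f(ts)ts/t^{\frac{N+\alpha}{N}}$ nondecreasing, which is where the argument is delicate. Without a monotonicity hypothesis I would not prove the Nehari minimax by strict monotonicity; instead I would argue by comparison, as follows.

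First, for any $u\ne0$, the function $\varphi(t)=I_\varepsilon(tu)$ is positive near $0$ by {\bf (H1)} and HLS, and tends to $-\infty$ because of the term $t^{\frac{2(N+\alpha)}{N}}$ with exponent $>2$. Its maximum point $t^*$ gives $t^*u\in\mathcal N^0_\varepsilon$, hence $\sup_t I_\varepsilon(tu)\ge m_\varepsilon$. Second, for the reverse inequality, take a ground state $u_\varepsilon$; it exists by the results cited before \eqref{e24}, and if it does not exist we use a minimizing sequence on $\mathcal P^0_\varepsilon$. We need $\sup_t I_\varepsilon(tu_\varepsilon)=I_\varepsilon(u_\varepsilon)$. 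Since the Poho\v{z}aev identity \eqref{e22} holds for $u_\varepsilon$, the dilation argument gives $\sup_t I_\varepsilon((u_\varepsilon)_t)=m_\varepsilon$.

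The main obstacle is showing that $t=1$ is a global maximum of $I_\varepsilon(tu_\varepsilon)$. I would try to handle it as in \cite{LMZ2019,LM2019}: construct a continuous path $\gamma$ joining $0$ to a point of negative energy that combines the dilation and the scalar multiplication, with $\max_\gamma I_\varepsilon=m_\varepsilon$. Then I would use the mountain-pass level $c_\varepsilon=\inf_\gamma\max I_\varepsilon$, which equals $m_\varepsilon$ by the Jeanjean-type argument via the augmented functional $(t,u)\mapsto I_\varepsilon(u_{e^t})$. Each scalar fiber $t\mapsto tu$ is itself an admissible path, so $\sup_t I_\varepsilon(tu)\ge c_\varepsilon=m_\varepsilon$. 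Combining these bounds gives the equalities, and the last assertion for a ground state $u_\varepsilon$ follows from the same comparison.
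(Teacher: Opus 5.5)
Your treatment of the dilation fiber is correct and is the standard argument. The function $\frac{d}{dt}I_\varepsilon(u_t)=t^{-1}P^0_\varepsilon(u_t)$ has exactly one zero, and it is the global maximum. With \eqref{e24} and Lemma \ref{lem24} this gives the second equality and $\sup_{t>0}I_\varepsilon((u_\varepsilon)_t)=I_\varepsilon(u_\varepsilon)$. The lower bound $\sup_{t\ge0}I_\varepsilon(tu)\ge m_\varepsilon$ via the Nehari point $t^*u$ is also fine.

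The gap is the reverse inequality $\inf_{u\neq0}\sup_{t\ge0}I_\varepsilon(tu)\le m_\varepsilon$, which is equivalent to the claim $\sup_{t>0}I_\varepsilon(tu_\varepsilon)=I_\varepsilon(u_\varepsilon)$. Your mountain-pass step does not provide it. A mountain-pass level is an infimum over paths, so it only yields lower bounds for the maximum along a given path. Using the scalar fiber as an admissible path gives $\sup_{t\ge0}I_\varepsilon(tu)\ge c_\varepsilon=m_\varepsilon$, which you already had. Knowing $\max_\gamma I_\varepsilon=m_\varepsilon$ on some other path $\gamma$ (built from dilations and multiplications) says nothing about $\max_{t\ge0}I_\varepsilon(tu_\varepsilon)$, which could exceed $m_\varepsilon$. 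So ``combining these bounds'' combines two lower bounds, and the upper bound, including the last assertion for $u_\varepsilon$, is never proved.

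What is missing is exactly the monotonicity you set aside: you must show that $t=1$ is the global maximum of $t\mapsto I_\varepsilon(tu_\varepsilon)$. Write $\frac{d}{dt}I_\varepsilon(tu)=t\big(\|\nabla u\|_2^2+\varepsilon\|u\|_2^2-h_u(t)\big)$ with $h_u(t)=t^{-2}\int_{\mathbb R^N}(I_\alpha*F(tu))F'(tu)tu$. It then suffices that $h_{u_\varepsilon}$ be strictly increasing. This holds, for instance, when $g(s)/s^{\frac{\alpha}{N}}$ is nondecreasing on $(0,\infty)$. In that case $h_u(t)$ equals $t^{\frac{2\alpha}{N}}$ times an integral that is nondecreasing in $t$. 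That condition also implies {\bf (H3)} and covers the power-type $g$ of the Corollary in Section 3. As you noticed, {\bf (H3)} alone controls only the factor $F(tu)/t^{\frac{N+\alpha}{N}}$, and {\bf (H1)}, {\bf (H4)} give no such monotonicity. So you need either an extra hypothesis of this kind or a genuinely different argument for the upper bound.

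Two further points. Ground states need not exist in this paper's setting: Theorem \ref{t12} gives nonexistence for small $\varepsilon$. Your fallback to a minimizing sequence in $\mathcal P^0_\varepsilon$ does not help, because Poho\v{z}aev membership controls the dilation fibers of those functions, not their scalar fibers.
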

\begin{proof}
The proof is standard, so we omit it here, and we refer readers to \cite{JJ2002,LM2019}.
\end{proof}

\section{Proof of Theorem \ref{t12}}
In this section, we consider the existence and nonexistence of ground states for the equation \eqref{e12} in case $\alpha\geq N-4$. 
In this case,  we have $\frac{N+\alpha+4}{N}\le \frac{N+\alpha}{N-2}$.
 We make  a canonical rescaling:
\begin{equation}\label{e31}
v(x)=\varepsilon^{-\frac{N(2+\alpha)}{4\alpha}}u(\varepsilon^{-\frac{1}{2}} x),
\end{equation}
then \eqref{e12} transforms into the equation
\begin{equation}\label{e32}
\begin{aligned}
-\Delta v+ v=&\left(I_{\alpha}*(|v|^{\frac{N+\alpha}{N}}+\varepsilon^{-\frac{(N+\alpha)(2+\alpha)}{4\alpha}}G(\varepsilon^{\frac{N(2+\alpha)}{4\alpha}}v))\right)\\
\cdot &\left(\frac{N+\alpha}{N}|v|^{\frac{N+\alpha}{N}-2}v+\varepsilon^{-\frac{2+\alpha}{4}}g(\varepsilon^{\frac{N(2+\alpha)}{4\alpha}}v)\right).\\
\end{aligned}
\end{equation}

The formal limit equation of \eqref{e32} as $\varepsilon\to 0$ is given by
\begin{equation}\label{e33}
-\Delta v+ v=\frac{N+\alpha}{N}(I_{\alpha}*|v|^{\frac{N+\alpha}{N}})|v|^{\frac{N+\alpha}{N}-2}v.
\end{equation}

We denote the Nehari manifolds for \eqref{e32} and
\eqref{e33} as follows:
$$\tilde{ \mathcal{N}}_{\varepsilon}:=\{v \in H^1(\mathbb R^N)\backslash\{0\}\mid \tilde  N_{\varepsilon}(v)=0\},$$
and
$$\tilde{\mathcal{N}}_{0}:=\{v \in H^1(\mathbb R^N)\backslash\{0\}\mid
\int_{\mathbb R^N}|\nabla v|^2+\int_{\mathbb R^N}|v|^2=\frac{N+\alpha}{N}\int_{\mathbb R^N}(I_{\alpha}*|v|^{\frac{N+\alpha}{N}})|v|^{\frac{N+\alpha}{N}}\},$$
where
$$\begin{aligned}
\tilde N_{\varepsilon}(v)=\int_{\mathbb R^N}|\nabla v|^2+\int_{\mathbb R^N}|v|^2
-\int_{\mathbb R^N}&(I_{\alpha}*(|v|^{\frac{N+\alpha}{N}}
+\varepsilon^{-\frac{(N+\alpha)(2+\alpha)}{4\alpha}}G(\varepsilon^{\frac{N(2+\alpha)}{4\alpha}}v)))\\
\cdot&(\frac{N+\alpha}{N}|v|^{\frac{N+\alpha}{N}}+\varepsilon^{-\frac{2+\alpha}{4}}g(\varepsilon^{\frac{N(2+\alpha)}{4\alpha}}v)v)dx.\\
\end{aligned}$$
The corresponding energy functional of \eqref{e32} is given by
\begin{equation}\label{e34}
\begin{aligned}
\tilde J_{\varepsilon}(v)=\frac{1}{2}\int_{\mathbb R^N}|\nabla v|^2+ |v|^2
-\frac{1}{2}\int_{\mathbb R^N}&(I_{\alpha}*(|v|^{\frac{N+\alpha}{N}}
+\varepsilon^{-\frac{(N+\alpha)(2+\alpha)}{4\alpha}}G(\varepsilon^{\frac{N(2+\alpha)}{4\alpha}}v)))\\
\cdot&(|v|^{\frac{N+\alpha}{N}}+\varepsilon^{-\frac{(N+\alpha)(2+\alpha)}{4\alpha}}G(\varepsilon^{\frac{N(2+\alpha)}{4\alpha}}v))dx,\\
\end{aligned}
\end{equation}
and the energy functional of the limit equation \eqref{e33} is
\begin{equation}\label{e35}
\tilde J_{0}(v)=\frac{1}{2}\int_{\mathbb R^N}|\nabla v|^2+|v|^2
-\frac{1}{2}\int_{\mathbb R^N}(I_{\alpha}*|v|^{\frac{N+\alpha}{N}})|v|^{\frac{N+\alpha}{N}}.
\end{equation}
It is easy to see that
$$\tilde m_{\varepsilon}:=\inf_{v\in \tilde{\mathcal{N}}_{\varepsilon}}\tilde J_{\varepsilon}(v)\ \ \mbox{and}\ \  \tilde m_{0}:=\inf_{v \in \tilde{ \mathcal{N}}_{0}}\tilde J_{0}(v)$$
are well defined and positive. Moreover,  it is not hard to show that    
$$
\tilde m_{\varepsilon}=\inf_{u\in H^1(\mathbb R^N)\setminus\{0\}}\sup_{t\ge 0}\tilde{J}_{\varepsilon}(tu)
=\inf_{u\in H^1(\mathbb R^N)\setminus\{0\}}\sup_{t\ge 0}\tilde{J}_{\varepsilon}(u_t).
$$
Similar min-max characterization holds true for $\tilde m_0$. Let  $S_\alpha$ be defined by \eqref{e110}, then it is standard to show that  
\begin{equation}\label{e36}
S_{\alpha}=\inf_{v \in H^1(\mathbb R^N)\backslash\{0\}}\frac{\int_{\mathbb R^N}|\nabla v|^2+\int_{\mathbb R^N}|v|^2}
{\left(\int_{\mathbb R^N}(I_{\alpha}*|v|^{\frac{N+\alpha}{N}})|v|^{\frac{N+\alpha}{N}}\right)^{\frac{N}{N+\alpha}}}
\end{equation}
and  $\tilde m_0=\frac{\alpha}{2N}(\frac{N}{N+\alpha}S_\alpha)^{\frac{N+\alpha}{\alpha}}$  is not attained.  By the min-max descriptions of $\tilde m_\varepsilon$ and $\tilde  m_0$,  it is easy to show that $\tilde m_\varepsilon\le \tilde m_0=\frac{\alpha}{2N}(\frac{N}{N+\alpha}S_\alpha)^{\frac{N+\alpha}{\alpha}}$ for all $\varepsilon>0$.

\begin{proof}[Proof of Theorem \ref{t12}]   
By the assumption {\bf (H3)}, for any fixed $v\in H^1(\mathbb R^N)$,  we have 
$$
\begin{array}{rcl}
\frac{d}{d\varepsilon}\tilde J_\varepsilon(v)&=&\frac{N(2+\alpha)}{4\alpha}\varepsilon^{-\frac{(N+\alpha)(2+\alpha)+4\alpha}{4\alpha}}\int_{\mathbb R^N}(I_\alpha\ast (|v|^{\frac{N+\alpha}{N}}+\varepsilon^{-\frac{(N+\alpha)(2+\alpha)}{4\alpha}}G(\varepsilon^{\frac{N(2+\alpha)}{4\alpha}}v)))\\
&\mbox{} & \qquad\qquad \qquad \qquad  \qquad \cdot (\frac{N+\alpha}{N}G(\varepsilon^{\frac{N(2+\alpha)}{4\alpha}}v)-g(\varepsilon^{\frac{N(2+\alpha)}{4\alpha}}v)\varepsilon^{\frac{N(2+\alpha)}{4\alpha}}v)dx\\
&\le & 0.
\end{array}
$$
Therefore,  $\tilde J_\varepsilon(v)$ is nonincreasing with respect to $\varepsilon>0$ for any fixed $v\in H^1(\mathbb R^N)$.  For any $\varepsilon_1<\varepsilon_2$ and any $v\in \tilde{\mathcal{N}}_{\varepsilon_1}$, there exists $t>0$ such that $tv\in \tilde{\mathcal{N}}_{\varepsilon_2}$. Therefore, we obtain
$$
\tilde m_{\varepsilon_2}\le \tilde J_{\varepsilon_2}(tv)\le \tilde J_{\varepsilon_1}(tv)\le \tilde J_{\varepsilon_1}(v),
$$
from which it follows that $\tilde m_{\varepsilon_2}\le \tilde m_{\varepsilon_1}$. That is,  $\tilde m_\varepsilon$ is nonincreasing  with respect to $\varepsilon$.    On the other hand, if $\tilde m_\varepsilon<\frac{\alpha}{2N}(\frac{N}{N+\alpha}S_\alpha)^{\frac{N+\alpha}{\alpha}}$ for some $\varepsilon>0$, arguing as in \cite{LLT2022} , we can show that \eqref{e12} 
has a ground state solution.

We claim that \eqref{e12} 
has no ground state solution for small $\varepsilon>0$. Suppose for the contrary that \eqref{e12} 
has a ground state $u_\varepsilon$ for all  $\varepsilon>0$. Then 
$$
v_\varepsilon(x)=\varepsilon^{-\frac{N(2+\alpha)}{4\alpha}}u_\varepsilon(\varepsilon^{-\frac{1}{2}}x)
$$ 
is a ground state of the equation \eqref{e32}.
  By the Nehari and Poho\v zaev identities,  we obtain
$$\begin{aligned}
\int_{\mathbb R^N}|\nabla v_{\varepsilon}|^2+\int_{\mathbb R^N}|v_{\varepsilon}|^2
=&\int_{\mathbb R^N}(I_{\alpha}*(|v_{\varepsilon}|^{\frac{N+\alpha}{N}}
+\varepsilon^{-\frac{(N+\alpha)(2+\alpha)}{4\alpha}}G(\varepsilon^{\frac{N(2+\alpha)}{4\alpha}}v_{\varepsilon})))\\
&\quad \cdot(\frac{N+\alpha}{N}|v_{\varepsilon}|^{\frac{N+\alpha}{N}}
+\varepsilon^{-\frac{(N+\alpha)(2+\alpha)}{4\alpha}}
g(\varepsilon^{\frac{N(2+\alpha)}{4\alpha}}v_{\varepsilon})\varepsilon^{\frac{N(2+\alpha)}{4\alpha}}v_{\varepsilon})dx\\
\end{aligned}
$$
and 
$$
\begin{aligned}
\frac{1}{2^*}\int_{\mathbb R^N}|\nabla v_{\varepsilon}|^2+\frac{1}{2}\int_{\mathbb R^N}|v_{\varepsilon}|^2
=&\frac{N+\alpha}{2N}\int_{\mathbb R^N}(I_{\alpha}*(|v_{\varepsilon}|^{\frac{N+\alpha}{N}}
+\varepsilon^{-\frac{(N+\alpha)(2+\alpha)}{4\alpha}}G(\varepsilon^{\frac{N(2+\alpha)}{4\alpha}}v_{\varepsilon})))\\
&\qquad \qquad \cdot(|v_{\varepsilon}|^{\frac{N+\alpha}{N}}
+\varepsilon^{-\frac{(N+\alpha)(2+\alpha)}{4\alpha}}G(\varepsilon^{\frac{N(2+\alpha)}{4\alpha}}v_{\varepsilon}))dx.\\
\end{aligned}
$$
Therefore,  we obtain
$$
\tilde m_\varepsilon=\frac{2+\alpha}{2(N+\alpha)}\int_{\mathbb R^N}|\nabla v_\varepsilon|^2+\frac{\alpha}{2(N+\alpha)}\int_{\mathbb R^N}|v_\varepsilon|^{2}.
$$
Hence, $\|\nabla v_\varepsilon\|_2$ and $\|v_\varepsilon\|_2$ are bounded.

On the other hand, by the Nehari and the Poho\v zaev identities,  we have
\begin{equation}\label{e37}
\begin{aligned}
\varepsilon^{\frac{(N+\alpha)(2+\alpha)}{4\alpha}}\|\nabla v_{\varepsilon}\|_2^2
&=\int_{\mathbb R^N}(I_{\alpha}*(|v_{\varepsilon}|^{\frac{N+\alpha}{N}}
+\varepsilon^{-\frac{(N+\alpha)(2+\alpha)}{4\alpha}}G(\varepsilon^{\frac{N(2+\alpha)}{4\alpha}}v_{\varepsilon})))\\
&\quad \cdot\left(\frac{N}{2}
g(\varepsilon^{\frac{N(2+\alpha)}{4\alpha}}v_{\varepsilon})\varepsilon^{\frac{N(2+\alpha)}{4\alpha}}v_{\varepsilon}
-\frac{N+\alpha}{2}G(\varepsilon^{\frac{N(2+\alpha)}{4\alpha}}v_{\varepsilon})\right)dx.\\
\end{aligned}
\end{equation}
Since $q\in [\frac{N+\alpha+4}{N}, \frac{N+\alpha}{N-2})$, by {\bf (H1)} and {\bf (H2)}, for any $\eta>\limsup_{s\to 0}g(s)/s^{\frac{\alpha+4}{N}}$ and any  small $\delta>0$, there exists $C>0$ such that
\begin{equation}\label{e38}
g(s)\le \eta s^{\frac{\alpha+4}{N}}+Cs^{q-1}+\delta s^{\frac{2+\alpha}{N-2}},\quad 
G(s)\le \eta s^{\frac{N+\alpha+4}{N}}+Cs^{q}+\delta s^{\frac{N+\alpha}{N-2}},
\end{equation}
and
\begin{equation}\label{e39}
\left|\frac{N}{2}g(s)s-\frac{N+\alpha}{2}G(s)\right|\le \eta s^{\frac{N+\alpha+4}{N}}+Cs^{q}+\delta s^{\frac{N+\alpha}{N-2}}, \quad \forall \ s\ge 0.
\end{equation}
By the Hardy-Littlewood-Sobolev, the Gagliardo-Nirenberg inequality, and the boundedness of $\{v_\varepsilon\}$ in $H^1(\mathbb R^N)$, we have
$$
\int_{\mathbb R^N}(I_{\alpha}*|v_{\varepsilon}|^{\frac{N+\alpha}{N}})|v_\varepsilon|^{\frac{N+\alpha+4}{N}} \lesssim\|\nabla v_\varepsilon\|_2^2,
$$
$$
\int_{\mathbb R^N}(I_{\alpha}*|v_{\varepsilon}|^{\frac{N+\alpha}{N}})|v_\varepsilon|^{q} \lesssim \|\nabla v_\varepsilon\|_2^{\frac{Nq-N-\alpha}{2}},
$$
$$
\int_{\mathbb R^N}(I_{\alpha}*|v_{\varepsilon}|^{\frac{N+\alpha}{N}})|v_\varepsilon|^{\frac{N+\alpha}{N-2}} \lesssim \|\nabla v_\varepsilon\|_2^{\frac{N+\alpha}{N-2}}.
$$
Therefore, we obtain 
\begin{equation}\label{e310}
\begin{array}{rl}
&\int_{\mathbb R^N}(I_{\alpha}\ast |v_{\varepsilon}|^{\frac{N+\alpha}{N}})\left(\frac{N}{2}
g(\varepsilon^{\frac{N(2+\alpha)}{4\alpha}}v_{\varepsilon})\varepsilon^{\frac{N(2+\alpha)}{4\alpha}}v_{\varepsilon}
-\frac{N+\alpha}{2}G(\varepsilon^{\frac{N(2+\alpha)}{4\alpha}}v_{\varepsilon})\right)dx\\
&\lesssim \eta \varepsilon^{\frac{(N+\alpha+4)(2+\alpha)}{4\alpha}}\|\nabla v_\varepsilon\|_2^2+ \varepsilon^{\frac{Nq(2+\alpha)}{4\alpha}}\|\nabla v_\varepsilon\|_2^{\frac{Nq-N-\alpha}{2}}
+\delta \varepsilon^{\frac{N(2+\alpha)(N+\alpha)}{4\alpha(N-2)}}\|\nabla v_\varepsilon\|_2^{\frac{N+\alpha}{N-2}}.
\end{array}
\end{equation}
Let $r,s\in [\frac{N+\alpha+4}{N}, \frac{N+\alpha}{N-2})$, then by the Hardy-Littlewood-Sobolev, the Gagliardo-Nirenberg inequality, and the boundedness of $\{v_\varepsilon\}$ in $H^1(\mathbb R^N)$, we get
$$
\int_{\mathbb R^N}(I_\alpha\ast |v_\varepsilon|^r)|v_\varepsilon|^s\le C\|\nabla v_\varepsilon\|_2^{\frac{N(r+s)}{2}-N-\alpha}.
$$
Therefore, we have 
\begin{equation}\label{e311}
\begin{array}{rl}
&\int_{\mathbb R^N}(I_{\alpha}\ast G(\varepsilon^{\frac{N(2+\alpha)}{4\alpha}}v_{\varepsilon}))
\left(\frac{N}{2}
g(\varepsilon^{\frac{N(2+\alpha)}{4\alpha}}v_{\varepsilon})\varepsilon^{\frac{N(2+\alpha)}{4\alpha}}v_{\varepsilon}
-\frac{N+\alpha}{2}G(\varepsilon^{\frac{N(2+\alpha)}{4\alpha}}v_{\varepsilon})\right)dx\\
&\lesssim \eta^2 \varepsilon^{\frac{(N+\alpha+4)(2+\alpha)}{2\alpha}}\|\nabla v_\varepsilon\|_2^4+\eta \varepsilon^{\frac{(N+\alpha+4+Nq)(2+\alpha)}{4\alpha}}\|\nabla v_\varepsilon\|_2^{\frac{Nq+4-N-\alpha}{2}}\\
&+\eta \delta \varepsilon^{\frac{(N+\alpha+4+N2_\alpha^*)(2+\alpha)}{4\alpha}}\|\nabla v_\varepsilon\|_2^{\frac{N2_\alpha^*+4-N-\alpha}{2}}
+\varepsilon^{\frac{Nq(2+\alpha)}{2\alpha}}\|\nabla v_\varepsilon\|_2^{Nq-N-\alpha}\\
&+\delta \varepsilon^{\frac{N(2_\alpha^*+q)(2+\alpha)}{4\alpha}} \|\nabla v_\varepsilon\|_2^{\frac{N(2_\alpha^*+q)-2N-2\alpha}{2}}+\delta^2\varepsilon^{\frac{N2_\alpha^*(2+\alpha)}{2\alpha}}\|\nabla v_\varepsilon\|_2^{N2_\alpha^*-N-\alpha},
\end{array}
\end{equation}
where  and in what follows $2^*_\alpha=\frac{N+\alpha}{N-2}$. Thus, we get
$$
\begin{array}{rl}
&\varepsilon^{\frac{(N+\alpha)(2+\alpha)}{4\alpha}}\|\nabla v_{\varepsilon}\|_2^2\\
&\lesssim \eta \varepsilon^{\frac{(N+\alpha+4)(2+\alpha)}{4\alpha}}\|\nabla v_\varepsilon\|_2^2+ \varepsilon^{\frac{Nq(2+\alpha)}{4\alpha}}\|\nabla v_\varepsilon\|_2^{\frac{Nq-N-\alpha}{2}}\\
&\mbox{} +\delta \varepsilon^{\frac{N(2+\alpha)(N+\alpha)}{4\alpha(N-2)}}\|\nabla v_\varepsilon\|_2^{\frac{N+\alpha}{N-2}}
+\eta^2 \varepsilon^{\frac{(N+\alpha+8)(2+\alpha)}{4\alpha}}\|\nabla v_\varepsilon\|_2^4\\
&\mbox{} +\eta \varepsilon^{\frac{(4+Nq)(2+\alpha)}{4\alpha}}\|\nabla v_\varepsilon\|_2^{\frac{Nq+4-N-\alpha}{2}}
+\eta \delta \varepsilon^{\frac{(4+N2_\alpha^*)(2+\alpha)}{4\alpha}}\|\nabla v_\varepsilon\|_2^{\frac{N2_\alpha^*+4-N-\alpha}{2}}\\
&\mbox{} +\varepsilon^{\frac{(2Nq-N-\alpha)(2+\alpha))}{4\alpha}}\|\nabla v_\varepsilon\|_2^{Nq-N-\alpha}+\delta \varepsilon^{\frac{(N(2_\alpha^*+q)-N-\alpha)(2+\alpha)}{4\alpha}} \|\nabla v_\varepsilon\|_2^{\frac{N(2_\alpha^*+q)-2N-2\alpha}{2}}\\
&\mbox{} +\delta^2\varepsilon^{\frac{(2N2_\alpha^*-N-\alpha)(2+\alpha)}{4\alpha}}\|\nabla v_\varepsilon\|_2^{N2_\alpha^*-N-\alpha},
\end{array}
$$
which implies that
\begin{equation}\label{e312}
\begin{array}{rcl}
1&\lesssim &\eta \varepsilon^{\frac{2+\alpha}{\alpha}}+ \varepsilon^{\frac{(Nq-N-\alpha)(2+\alpha)}{4\alpha}}\|\nabla v_\varepsilon\|_2^{\frac{Nq-N-\alpha-4}{2}}+\delta \varepsilon^{\frac{(N2_\alpha^*-N-\alpha)(2+\alpha)}{4\alpha}}\|\nabla v_\varepsilon\|_2^{2_\alpha^*-2}\\
&\mbox{}&+\eta^2 \varepsilon^{\frac{2(2+\alpha)}{\alpha}}\|\nabla v_\varepsilon\|_2^2+\eta \varepsilon^{\frac{(4+Nq-N-\alpha)(2+\alpha)}{4\alpha}}\|\nabla v_\varepsilon\|_2^{\frac{Nq-N-\alpha}{2}}\\
&\mbox{}&
+\eta \delta \varepsilon^{\frac{(4+N2_\alpha^*-N-\alpha)(2+\alpha)}{4\alpha}}\|\nabla v_\varepsilon\|_2^{\frac{N2_\alpha^*-N-\alpha}{2}}+\varepsilon^{\frac{(Nq-N-\alpha)(2+\alpha)}{2\alpha}}\|\nabla v_\varepsilon\|_2^{Nq-N-\alpha-2}\\
&\mbox{}&+\delta \varepsilon^{\frac{(N(2_\alpha^*+q)-2N-2\alpha)(2+\alpha)}{4\alpha}} \|\nabla v_\varepsilon\|_2^{\frac{N(2_\alpha^*+q)-2N-2\alpha-4}{2}}\\
&\mbox{}&+\delta^2\varepsilon^{\frac{(N2_\alpha^*-N-\alpha)(2+\alpha)}{2\alpha}}\|\nabla v_\varepsilon\|_2^{N2_\alpha^*-N-\alpha-2}.
\end{array}
\end{equation}
By the boundedness of $\{v_\varepsilon\}$ in $H^1(\mathbb R^N)$, we conclude $\varepsilon>\tilde \varepsilon_q$ for some $\tilde \varepsilon_q>0$. This leads to a contradiction and establish the claim.
Therefore, we conclude that  $\tilde m_\varepsilon\ge \frac{\alpha}{2N}(\frac{N}{N+\alpha}S_\alpha)^{\frac{N+\alpha}{\alpha}}$ for small $\varepsilon>0$. Thus, we get
$\tilde m_\varepsilon=\frac{\alpha}{2N}(\frac{N}{N+\alpha}S_\alpha)^{\frac{N+\alpha}{\alpha}}$  for small  $\varepsilon>0$. Let
\begin{equation}\label{e313}
\varepsilon_q:=\sup\left\{\varepsilon>0 \left | \  \tilde m_\varepsilon=\frac{\alpha}{2N}(\frac{N}{N+\alpha}S_\alpha)^{\frac{N+\alpha}{\alpha}} \right.\right\}.
\end{equation}
Then $\varepsilon_q>0$ for $\frac{N+\alpha+4}{N}\le q<\frac{N+\alpha}{N-2}$. It is well known (cf.\cite{LLT2022}) that
$\tilde m_\varepsilon<\frac{\alpha}{2N}(\frac{N}{N+\alpha}S_\alpha)^{\frac{N+\alpha}{\alpha}}$ for sufficiently large $\varepsilon>0$. Therefore, we have $0<\varepsilon_q<+\infty$. Hence, $\tilde m_\varepsilon=\frac{\alpha}{2N}(\frac{N}{N+\alpha}S_\alpha)^{\frac{N+\alpha}{\alpha}}$  for $\varepsilon\in (0,\varepsilon_q)$ and $\tilde m_\varepsilon<\frac{\alpha}{2N}(\frac{N}{N+\alpha}S_\alpha)^{\frac{N+\alpha}{\alpha}}$  for $\varepsilon>\varepsilon_q.$  If there exists $\varepsilon\in (0,\varepsilon_q)$ such that $\tilde m_\varepsilon$ is attained, then arguing as in \cite[Lemma 3.3]{WW2023}, we obtain  that $\tilde m_{\varepsilon'}<\tilde m_\varepsilon=\frac{\alpha}{2N}(\frac{N}{N+\alpha}S_\alpha)^{\frac{N+\alpha}{\alpha}}$  for $\varepsilon'>\varepsilon,$ which contradicts to the definition of $\varepsilon_q$.  Thus, we conclude that \eqref{e12} has no ground state solution for $\varepsilon\in (0,\varepsilon_q)$ and admits a ground state solution for $\varepsilon>\varepsilon_q$.

Finally, assume that $\lim_{s\to 0}g(s)/s^{\frac{\alpha+4}{N}}=0$, then we can choose $\eta>0$ sufficiently small in \eqref{e39} and \eqref{e312}.  Let $u_\varepsilon\in H^1(\mathbb R^N)$ be  a ground state of \eqref{e12} for $\varepsilon>\varepsilon_q$ and $v_\varepsilon(x)=\varepsilon^{-\frac{N(2+\alpha)}{4\alpha}}u_\varepsilon(\varepsilon^{-\frac{1}{2}}x)$, then $v_\varepsilon$ is bounded, hence $v_\varepsilon\rightharpoonup v_{\varepsilon_q}$ weakly in $H^1(\mathbb R^N)$ and $v_\varepsilon\to v_{\varepsilon_q}$  in $L^s(\mathbb R^N)$ as $\varepsilon\to \varepsilon_q^+$ for any $s\in (2,2^*)$.

If $q\in (\frac{N+\alpha+4}{N},\frac{N+\alpha}{N-2})$, then choose $\eta>0$ small enough, by \eqref{e312} and the boundedness of $\{v_\varepsilon\}$, it follows that there exists two constants $C_1,C_2>0$ such that
$$
C_1\le \|\nabla v_\varepsilon\|_2\le C_2, \quad {\rm for} \  \varepsilon>\varepsilon_q \ {\rm with} \ \varepsilon-\varepsilon_q>0 \ {\rm small}.
$$
Choosing $\delta>0$ sufficiently small,  by the boundedness of $\{v_\varepsilon\}$, then it follows \eqref{e37}, \eqref{e310} and \eqref{e311} that
\begin{equation}\label{e314}
\int_{\mathbb R^N}(I_\alpha\ast (|v_\varepsilon|^{\frac{N+\alpha}{N}}+|v_\varepsilon|^q))|v_\varepsilon|^q\sim 1, \quad {\rm as} \ \varepsilon\to \varepsilon_q^+.
\end{equation}
Since $\frac{N+\alpha}{N}<q<\frac{N+\alpha}{N-2}$, we choose a positive number $\delta_q$ such that 
$$
0<\delta_q<\min\left\{\frac{4N}{N+2}, \frac{2N}{q}\left(q-\frac{N+\alpha}{N}\right)\right\}.
$$
Then it is easy to see that 
\begin{equation}\label{e315}
2<\frac{2N-\delta_q}{N-\delta_q}<\frac{2N}{N-2}, \quad 2\le \frac{(2N-\delta_q)q}{N+\alpha}\le \frac{2N}{N-2}.
\end{equation}
By the Hardy-Littlewood-Sobolev inequality, we get 
$$
\begin{array}{rl}
&\int_{\mathbb R^N}(I_\alpha\ast |v_\varepsilon|^{\frac{N+\alpha}{N}})|v_\varepsilon|^q-
\int_{\mathbb R^N}(I_\alpha\ast |v_{\varepsilon_q}|^{\frac{N+\alpha}{N}})|v_{\varepsilon_q}|^q\\
&=\int_{\mathbb R^N}(I_\alpha\ast |v_\varepsilon|^{\frac{N+\alpha}{N}})(|v_\varepsilon|^q-|v_{\varepsilon_q}|^q)+
\int_{\mathbb R^N}(I_\alpha\ast (|v_\varepsilon|^{\frac{N+\alpha}{N}}-|v_{\varepsilon_q}|^{\frac{N+\alpha}{N}}))|v_{\varepsilon_q}|^q\\
&\le C\|v_\varepsilon\|_2^{\frac{N+\alpha}{N}}\|v_\varepsilon-v_{\varepsilon_q}\|_{\frac{2Nq}{N+\alpha}}^q+C\|v_\varepsilon-v_{\varepsilon_q}\|_{\frac{2N-\delta_q}{N-\delta_q}}^{\frac{N+\alpha}{N}}\|v_{\varepsilon_q}\|_{\frac{(2N-\delta_q)q}{N+\alpha}}^q.
\end{array}
$$
 Therefore, we get 
 $$
 \lim_{\varepsilon\to \varepsilon_q^+}\int_{\mathbb R^N}(I_\alpha\ast |v_\varepsilon|^{\frac{N+\alpha}{N}})|v_\varepsilon|^q=
\int_{\mathbb R^N}(I_\alpha\ast |v_{\varepsilon_q}|^{\frac{N+\alpha}{N}})|v_{\varepsilon_q}|^q.
$$
Similarly, we have 
$$
 \lim_{\varepsilon\to \varepsilon_q^+}\int_{\mathbb R^N}(I_\alpha\ast |v_\varepsilon|^q)|v_\varepsilon|^q=
\int_{\mathbb R^N}(I_\alpha\ast |v_{\varepsilon_q}|^q)|v_{\varepsilon_q}|^q.
$$
Thus, by \eqref{e314}, we have
\begin{equation}\label{e316}
\lim_{\varepsilon\to\varepsilon_q^+}\int_{\mathbb R^N}(I_\alpha\ast (|v_{\varepsilon}|^{\frac{N+\alpha}{N}}+|v_{\varepsilon}|^q))|v_{\varepsilon}|^q=\int_{\mathbb R^N}(I_\alpha\ast (|v_{\varepsilon_q}|^{\frac{N+\alpha}{N}}+|v_{\varepsilon_q}|^q))|v_{\varepsilon_q}|^q\not=0.
\end{equation}
Hence, we conclude 
that $v_{\varepsilon_q}\not=0$.  
It is standard to show that $v_\varepsilon\to v_{\varepsilon_q}$ in $H^1(\mathbb R^N)$  as $\varepsilon\to \varepsilon_q^+$ up to a subsequence, then it follows that $v_{\varepsilon_q}\in \tilde{\mathcal{N}}_{\varepsilon_q}$.
Thus $\tilde m_{\varepsilon_q}$ is attained by $v_{\varepsilon_q}$, and $v_{\varepsilon_q}$ is a  positive ground state solution of  the equation \eqref{e32} with $\varepsilon=\varepsilon_q$.
The proof is complete.
\end{proof}

\begin{corollary}  Assume that
$$
g(u)=\sum_{i=1}^kc_i|u|^{q_i-2}u, \quad c_i>0, \quad q_i\in \Big(\frac{N+\alpha}{N},\frac{N+\alpha}{N-2}\Big).
$$
Put $q:=\min\{q_i \ | \ i=1,2, \cdots, k\}$. Then the following statements hold true:

{\rm (1)} if $q\ge \frac{N+\alpha+4}{N}$, then there exists $\varepsilon_q>0$ such that  for any $\varepsilon\in (0,\varepsilon_q)$, the problem  \eqref{e12} 
has no  ground state, while for any $\varepsilon\ge \varepsilon_q$  $(\varepsilon>\varepsilon_q$ in the case that $q=\frac{N+\alpha+4}{N})$,  the problem  \eqref{e12} 
admits a positive  ground state $u_\varepsilon\in H^1(\mathbb R^N)$;

{\rm (2)} if $q<\frac{N+\alpha+4}{N}$, then for any $\varepsilon>0$, the problem  \eqref{e12} 
admits a positive  ground state $u_\varepsilon\in H^1(\mathbb R^N)$.
\end{corollary}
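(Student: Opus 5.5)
The plan is to reduce the Corollary to Theorem \ref{t12} for part (1), and to the result of \cite{LLT2022} for part (2), by checking the hypotheses on $g(u)=\sum_i c_i|u|^{q_i-2}u$.

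\textbf{Part (1).} We will verify {\bf (H1)}–{\bf (H3)}, plus the strengthened condition $\lim_{s\to0}g(s)/s^{\frac{\alpha+4}{N}}=0$ when $q>\frac{N+\alpha+4}{N}$.

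For {\bf (H1)}: clearly $g\in C([0,\infty),[0,\infty))$. Since every $q_i>\frac{N+\alpha}{N}$, we have $q_i-1>\frac{\alpha}{N}$, so $g(s)/s^{\alpha/N}\to0$ as $s\to0$. Since every $q_i<\frac{N+\alpha}{N-2}$, we have $q_i-1<\frac{2+\alpha}{N-2}$, so $g(s)/s^{\frac{2+\alpha}{N-2}}\to0$ as $s\to\infty$.

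For {\bf (H2)}: as $s\to0$, $g(s)\simeq c_{i_0}s^{q-1}$, where $i_0$ indexes the minimal exponent (summing the $c_i$ if the minimum is repeated). Hence $\liminf_{s\to0} g(s)/s^{q-1}>0$. Because $q\ge\frac{N+\alpha+4}{N}$, i.e. $q-1\ge\frac{\alpha+4}{N}$, the $\limsup$ of $g(s)/s^{\frac{\alpha+4}{N}}$ is finite. It is in fact zero when $q>\frac{N+\alpha+4}{N}$.

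For {\bf (H3)}: $G(s)=\sum_i\frac{c_i}{q_i}s^{q_i}$ and $g(s)s=\sum_i c_is^{q_i}$. Since $q_i>\frac{N+\alpha}{N}$ for each $i$, every term satisfies $\frac{N+\alpha}{N}\frac{c_i}{q_i}s^{q_i}\le c_is^{q_i}$.

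Note that the existence of some $q_i$ with $q\ge\frac{N+\alpha+4}{N}$ and $q<\frac{N+\alpha}{N-2}$ forces $\alpha>N-4$, which is the remaining assumption of Theorem \ref{t12}.

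Theorem \ref{t12} then gives nonexistence on $(0,\varepsilon_q)$ and existence for $\varepsilon>\varepsilon_q$. When $q>\frac{N+\alpha+4}{N}$, the \emph{moreover} part of Theorem \ref{t12} also gives existence at $\varepsilon=\varepsilon_q$, which is the claim "$\varepsilon\ge\varepsilon_q$". In the borderline case $q=\frac{N+\alpha+4}{N}$ the limit $\lim g(s)/s^{\frac{\alpha+4}{N}}=c_{i_0}$ is nonzero, so we only claim $\varepsilon>\varepsilon_q$.

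Positivity of the ground state comes from Theorem \ref{t12}. Strictly speaking, $g$ is defined on all of $\mathbb R$ as an odd function, but replacing $u$ by $|u|$ does not increase the energy along the min-max characterization of Lemma \ref{lem25}, so ground states can be taken nonnegative. The strong maximum principle then makes them positive.

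\textbf{Part (2).} If $q<\frac{N+\alpha+4}{N}$, then $G(s)\ge \frac{c_{i_0}}{q}s^{q}$, so
$$\frac{G(s)}{s^{\frac{N+\alpha+4}{N}}}\ge \frac{c_{i_0}}{q}\,s^{q-\frac{N+\alpha+4}{N}}\to+\infty \quad\text{as } s\to0.$$
Together with {\bf (H1)}, checked exactly as above, the result of Li et al. \cite{LLT2022} quoted in the introduction yields a ground state for every $\varepsilon>0$. This ground state is positive by the same $|u|$ plus maximum principle argument.

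\textbf{Main obstacle.} No genuine difficulty is expected, since everything is a direct check of hypotheses. The only points needing care are the borderline case $q=\frac{N+\alpha+4}{N}$, where the endpoint $\varepsilon_q$ must be excluded, and confirming that $\alpha>N-4$ follows automatically in part (1).
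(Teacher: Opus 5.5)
Your proposal is correct and matches the paper's approach. The paper gives no separate proof: the Corollary is treated as an immediate consequence of Theorem \ref{t12} for part (1), where the ``moreover'' clause applies exactly when $q>\frac{N+\alpha+4}{N}$, and of the result of \cite{LLT2022} recalled right after Theorem \ref{t12} for part (2), and your verification of {\bf (H1)}--{\bf (H3)}, including the observation that case (1) forces $\alpha>N-4$, is precisely what that deduction requires.
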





\noindent
\textbf{Remark 3.1.}  Assume  {\bf (H1)} and {\bf (H2)} hold, $q\ge \frac{N+\alpha+4}{N}$. If ${\bf (H3)}$ is not true,  then there exist $0<\varepsilon_q^1\le \varepsilon_q^2<\infty$ such that  for any $\varepsilon\in (0,\varepsilon_q^1)$, the problem \eqref{e12} has no  ground state, while if $\varepsilon> \varepsilon_q^2$,  the problem \eqref{e12} admits a positive  ground state $u_\varepsilon\in H^1(\mathbb R^N)$, which is  radially symmetric and radially nonincreasing. Moreover,
if  $\lim_{s\to 0}g(s)/s^{\frac{\alpha+4}{N}}=0$ and $q\in (\frac{N+\alpha+4}{N}, \frac{N+\alpha}{N-2})$, then, as $\varepsilon\to \varepsilon_q^{2+}$, up to a subsequence, the rescaled family of ground states $v_\varepsilon(x)=\varepsilon^{-\frac{N(2+\alpha)}{4\alpha}}u_\varepsilon(\varepsilon^{-\frac{1}{2}}x)$ converges in $H^1(\mathbb R^N)$ to a positive solution of the equation \eqref{e32} with $\varepsilon=\varepsilon_q^2$.
\smallskip

\section{Proof of Theorem \ref{t11}}
In this section, we assume that $q<\frac{N+\alpha+4}{N}$ and study the asymptotic behavior of positive ground state solutions $u_\varepsilon$ of \eqref{e12} as $\varepsilon \to 0$.
Let 
$$\sigma=\frac{(2+\alpha)[Nq-(N+\alpha)]}{\alpha(N+\alpha+4-Nq)},$$ 
we   consider the following  rescalings 
\begin{equation}\label{e41}
w(x)=\varepsilon^{-\frac{N\sigma}{4}}v(\varepsilon^{-\frac{\sigma}{2}}x), \qquad v(x)=\varepsilon^{-\frac{N(2+\alpha)}{4\alpha}}u(\varepsilon^{-\frac{1}{2}} x),
\end{equation}
then equation \eqref{e32} transforms into the equation
\begin{equation}\label{e42}
-\varepsilon^{\sigma}\Delta w+w=\left(I_{\alpha}*(|w|^{\frac{N+\alpha}{N}} +\varepsilon_1^{-1}G(\varepsilon_2w))\right)
\left(\frac{N+\alpha}{N}|w|^{\frac{N+\alpha}{N}-2}w
+\varepsilon_1^{-1}\varepsilon_2g(\varepsilon_2w)\right),
\end{equation}
where $$\varepsilon_1=\varepsilon^{\frac{(N+\alpha)(2+\alpha)}{4\alpha}+\frac{(N+\alpha)\sigma}{4}},\ \ \ \varepsilon_2=\varepsilon^{\frac{N(2+\alpha)}{4\alpha}+\frac{N\sigma}{4}}.$$

It is easy to check  that 
\begin{equation}\label{e43}
\varepsilon^{\sigma}=\varepsilon_1^{-1}\varepsilon_2^{q}.
\end{equation}
The corresponding energy functional of \eqref{e42} is given by
$$\begin{aligned}
J_{\varepsilon}(w)=\frac{1}{2}\int_{\mathbb R^N}\varepsilon^{\sigma}|\nabla w|^2+|w|^2-\frac{1}{2}\int_{\mathbb R^N}
&(I_{\alpha}*(|w|^{\frac{N+\alpha}{N}}+\varepsilon_1^{-1}G(\varepsilon_2w)))\\
\cdot&(|w|^{\frac{N+\alpha}{N}}+\varepsilon_1^{-1}G(\varepsilon_2w))dx.\\
\end{aligned}$$

Also, we denote the Nehari manifold and Poho\v{z}aev manifold  corresponding to \eqref{e47} by $\mathcal{N}_{\varepsilon}$ and $\mathcal{P}_{\varepsilon}$, respectively. That is,
$$\mathcal{N}_{\varepsilon}:=\{w \in H^1(\mathbb R^N)\backslash\{0\}\mid N_{\varepsilon}(w)=0 \},$$ where
$$\begin{aligned}
N_{\varepsilon}(w)=\int_{\mathbb R^N}\varepsilon^{\sigma}|\nabla w|^2+|w|^2
-\int_{\mathbb R^N}&(I_{\alpha}*(|w|^{\frac{N+\alpha}{N}}+\varepsilon_1^{-1}G(\varepsilon_2w)))\\
\cdot&(\frac{N+\alpha}{N}|w|^{\frac{N+\alpha}{N}}+\varepsilon_1^{-1}\varepsilon_2g(\varepsilon_2w)w)dx,\\
\end{aligned}$$
$$\mathcal{P}_{\varepsilon}:=\{w \in H^1(\mathbb R^N)\backslash\{0\}\mid P_{\varepsilon}(w)=0\},$$
where
$$\begin{aligned}
P_{\varepsilon}(w)&=\frac{(N-2)\varepsilon^{\sigma}}{2}\int_{\mathbb R^N}|\nabla w|^2+\frac{N}{2}\int_{\mathbb R^N}|w|^2\\
&\ \ \ -\frac{N+\alpha}{2}\int_{\mathbb R^N}(I_{\alpha}*(|w|^{\frac{N+\alpha}{N}}
+\varepsilon_1^{-1}G(\varepsilon_2w)))(|w|^{\frac{N+\alpha}{N}}+\varepsilon_1^{-1}G(\varepsilon_2w))dx.\\
\end{aligned}$$

As $\varepsilon \to 0$, the limit of the equation \eqref{e42} is the Hardy-Littlewood-Sobolev  critical equation \eqref{e19}.
The corresponding  energy functional is given by
$$J_0(w)=\frac{1}{2}\int_{\mathbb R^N}|w|^2-\frac{1}{2}\int_{\mathbb R^N}(I_{\alpha}*|w|^{\frac{N+\alpha}{N}})|w|^{\frac{N+\alpha}{N}},$$
and the corresponding Nehari and Poho\v zaev manifolds are defined by
$$\mathcal{N}_{0}=\mathcal{P}_{0}:=\left\{w \in H^1(\mathbb R^N)\backslash\{0\} \left | \
\int_{\mathbb R^N}|w|^2=\frac{N+\alpha}{N}\int_{\mathbb R^N}(I_{\alpha}*|w|^{\frac{N+\alpha}{N}})|w|^{\frac{N+\alpha}{N}}\right\}.\right. $$

In what follows, we set 
\begin{equation}\label{e44}
\mathbb D(u):=\int_{\mathbb R^N}(I_{\alpha}*|u|^{\frac{N+\alpha}{N}})|u|^{\frac{N+\alpha}{N}}.
\end{equation}
It is standard to verify the following result.
\begin{lemma}\label{lem41}
Let $\varepsilon>0$, $u \in H^1(\mathbb R^{N})$ and $v$ is the rescaling \eqref{e41} of $u$, $v \in H^1(\mathbb R^{N})$ and $w$ is the rescaling \eqref{e46} of $v$. Then
\begin{itemize}
\item[(i)] \ \ $\varepsilon^{-\frac{N}{\alpha}}\|u\|_2^2=\|v\|_2^2=\|w\|_2^2,\ \ \
            \varepsilon^{-\frac{N+\alpha}{\alpha}}\|\nabla u\|_2^2=\|\nabla v\|_2^2=\varepsilon^{\sigma}\|\nabla w\|_2^2,$
\item[(ii)] \  \ $ \varepsilon^{-\frac{N+\alpha}{\alpha}}\mathbb D(u)=\mathbb D(v)=\mathbb D(w).$
\end{itemize}
\end{lemma}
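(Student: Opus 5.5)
The plan is to prove each identity by a change of variables, composing the two rescalings in \eqref{e41}. First, for $v(x)=\varepsilon^{-\frac{N(2+\alpha)}{4\alpha}}u(\varepsilon^{-\frac12}x)$, substitute $y=\varepsilon^{-1/2}x$, so that $dx=\varepsilon^{N/2}dy$. This gives
$$\|v\|_2^2=\varepsilon^{-\frac{N(2+\alpha)}{2\alpha}+\frac N2}\|u\|_2^2=\varepsilon^{-\frac N\alpha}\|u\|_2^2 .$$
Each derivative contributes an extra factor $\varepsilon^{-1/2}$, so
$$\|\nabla v\|_2^2=\varepsilon^{-\frac N\alpha-1}\|\nabla u\|_2^2=\varepsilon^{-\frac{N+\alpha}{\alpha}}\|\nabla u\|_2^2 .$$

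For the nonlocal term I will use the scaling of the Riesz kernel, $I_\alpha(\lambda z)=\lambda^{-(N-\alpha)}I_\alpha(z)$. If $v(x)=\mu u(\lambda x)$, then substituting $x=y/\lambda$ and $x'=y'/\lambda$ in the double integral gives
$$\mathbb D(v)=\mu^{\frac{2(N+\alpha)}{N}}\lambda^{-N-\alpha}\mathbb D(u).$$
With $\mu=\varepsilon^{-\frac{N(2+\alpha)}{4\alpha}}$ and $\lambda=\varepsilon^{-1/2}$, the exponent of $\varepsilon$ is
$$-\tfrac{(N+\alpha)(2+\alpha)}{2\alpha}+\tfrac{N+\alpha}{2}=-\tfrac{N+\alpha}{\alpha},$$
which is the claimed identity for $v$.

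Second, for $w(x)=\varepsilon^{-\frac{N\sigma}{4}}v(\varepsilon^{-\frac\sigma2}x)$ the same computations apply with $\mu=\lambda^{N/2}$ and $\lambda=\varepsilon^{-\sigma/2}$. This scaling preserves the $L^2$ norm, since the exponent is $-\frac{N\sigma}{2}+\frac{N\sigma}{2}=0$. It also preserves $\mathbb D$, because $\mu^{\frac{2(N+\alpha)}{N}}\lambda^{-N-\alpha}=\lambda^{N+\alpha}\lambda^{-N-\alpha}=1$; this is exactly the HLS-critical invariance. The gradient term picks up the factor $\lambda^2=\varepsilon^{-\sigma}$, so $\|\nabla w\|_2^2=\varepsilon^{-\sigma}\|\nabla v\|_2^2$, that is, $\|\nabla v\|_2^2=\varepsilon^\sigma\|\nabla w\|_2^2$. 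Combining the two steps yields (i) and (ii).

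There is no real obstacle here; the only care needed is in tracking the exponents, in particular checking $\frac{2(N+\alpha)}{N}\cdot\frac{N(2+\alpha)}{4\alpha}=\frac{(N+\alpha)(2+\alpha)}{2\alpha}$. I will also read the references ``\eqref{e46}'' and ``\eqref{e47}'' in the statement as referring to the second rescaling in \eqref{e41} and to equation \eqref{e42}.
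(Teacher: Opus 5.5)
Your proposal is correct: the paper gives no proof beyond calling the lemma ``standard to verify,'' and your direct change-of-variables computation is the intended check. All the exponents work out, including $\mu^{2(N+\alpha)/N}\lambda^{-N-\alpha}=1$ for the $L^2$-preserving second rescaling, and you are right to read the miscited labels as the second rescaling in \eqref{e41} and the equation \eqref{e42}.
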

Moreover, arguing as in \cite{LMZ2019, LM2019}, it is easy to show that
$$
\tilde m_0=\inf_{w \in \mathcal{N}_{0}} J_0(w)=\inf_{w \in \mathcal{P}_{0}} J_0(w),
$$
and 
$$
\tilde m_{\varepsilon}=\inf_{w \in \mathcal{N}_{\varepsilon}} J_{\varepsilon}(w)=\inf_{w \in \mathcal{P}_{\varepsilon}} J_{\varepsilon}(w).
$$
Let $u_\varepsilon$ be a ground state of \eqref{e12} and set
$$
w_\varepsilon(x)=\varepsilon^{-\frac{N\sigma}{4}}v_\varepsilon(\varepsilon^{-\frac{\sigma}{2}}x), \qquad v_\varepsilon(x)=\varepsilon^{-\frac{N(2+\alpha)}{4\alpha}}u_\varepsilon(\varepsilon^{-\frac{1}{2}} x),
$$
then $w_\varepsilon$ is a ground state of \eqref{e47} and satisfies $w_\varepsilon\in \mathcal{N}_\varepsilon \cap \mathcal{P}_\varepsilon$ and $\tilde m_\varepsilon=J_\varepsilon(w_\varepsilon)$. 
\begin{lemma}\label{lem42}
Assume that  {\bf (H1)} and {\bf (H4)} hold, then the rescaled family of solutions $\{w_{\varepsilon}\}$ is bounded in $H^1(\mathbb R^N)$, and satisfies
\begin{equation}\label{e45}
\|\nabla w_{\varepsilon}\|_2^2=\frac{a[Nq-(N+\alpha)]}{2q}
\int_{\mathbb R^N}(I_{\alpha}*|w_{\varepsilon}|^{\frac{N+\alpha}{N}})|w_{\varepsilon}|^{q}+o_{\varepsilon}(1).
\end{equation}
\end{lemma}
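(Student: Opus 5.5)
We combine the Nehari identity $N_\varepsilon(w_\varepsilon)=0$ with the Poho\v{z}aev identity $P_\varepsilon(w_\varepsilon)=0$ to isolate the gradient term. This yields an identity of the form \eqref{e37} in the $w$-variables:
$$
\varepsilon^{\sigma}\|\nabla w_\varepsilon\|_2^2=\int_{\mathbb R^N}\big(I_\alpha*(|w_\varepsilon|^{\frac{N+\alpha}{N}}+\varepsilon_1^{-1}G(\varepsilon_2w_\varepsilon))\big)\Big(\tfrac N2\varepsilon_1^{-1}\varepsilon_2 g(\varepsilon_2w_\varepsilon)w_\varepsilon-\tfrac{N+\alpha}{2}\varepsilon_1^{-1}G(\varepsilon_2w_\varepsilon)\Big).
$$
We also obtain the energy identity $\tilde m_\varepsilon=J_\varepsilon(w_\varepsilon)=\frac{2+\alpha}{2(N+\alpha)}\varepsilon^\sigma\|\nabla w_\varepsilon\|_2^2+\frac{\alpha}{2(N+\alpha)}\|w_\varepsilon\|_2^2$. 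Since $\tilde m_\varepsilon\le\tilde m_0$, this gives the a priori bounds $\|w_\varepsilon\|_2\lesssim1$ and $\varepsilon^\sigma\|\nabla w_\varepsilon\|_2^2\lesssim1$.

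Next we divide the first identity by $\varepsilon^\sigma=\varepsilon_1^{-1}\varepsilon_2^q$ (by \eqref{e43}). By {\bf (H4)}, $\frac N2 g(s)s-\frac{N+\alpha}{2}G(s)=\frac{a[Nq-(N+\alpha)]}{2q}s^q+h(s)$ with $h(s)=o(s^q)$ as $s\to0$. By {\bf (H1)}, for every $\delta>0$ we have $|h(s)|\le\delta s^q+C_\delta s^{p}$, with any fixed $p\in(q,2^*_\alpha]$ (we take $p=2^*_\alpha$, the growth bound being $\delta$-small at infinity). Similarly $G(s)\lesssim s^q+s^{2^*_\alpha}$. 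Since $\varepsilon_2\to0$, the contributions of $\varepsilon_2^{p}$ carry the extra factor $\varepsilon_2^{p-q}$, which is a positive power of $\varepsilon^\sigma$ up to constants, because $\varepsilon_2$ is a power of $\varepsilon$. This gives \eqref{e45} with
$$
\mathcal R_\varepsilon\lesssim \delta\,\mathcal D_q(w_\varepsilon)+C_\delta\varepsilon_2^{p-q}\!\int(I_\alpha*|w_\varepsilon|^{\frac{N+\alpha}{N}})|w_\varepsilon|^{p}+\varepsilon_1^{-1}\varepsilon_2^{q}\!\int(I_\alpha*(|w_\varepsilon|^q+\varepsilon_2^{p-q}|w_\varepsilon|^p))(|w_\varepsilon|^q+\varepsilon_2^{p-q}|w_\varepsilon|^p),
$$
where $\mathcal D_q(w)=\int(I_\alpha*|w|^{\frac{N+\alpha}{N}})|w|^q$. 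The cross terms come from the $\varepsilon_1^{-1}G$ part of the convolution; their prefactor is again $\varepsilon^\sigma$.

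Each double integral is estimated by Hardy-Littlewood-Sobolev and Gagliardo-Nirenberg/Sobolev using the $L^2$ bound. We get $\int(I_\alpha*|w|^r)|w|^s\lesssim\|\nabla w\|_2^{\frac{N(r+s)}{2}-N-\alpha}$, and in particular $\mathcal D_q(w_\varepsilon)\lesssim\|\nabla w_\varepsilon\|_2^{\frac{Nq-N-\alpha}{2}}$. Hence $\mathcal R_\varepsilon\lesssim\delta\|\nabla w_\varepsilon\|_2^{\frac{Nq-N-\alpha}{2}}+\sum_i\varepsilon^{\tau_i\sigma}\|\nabla w_\varepsilon\|_2^{\beta_i}$ with $\tau_i>0$. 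This leads to \eqref{e114}, and here lies the main obstacle: some $\beta_i$ (for instance $N2^*_\alpha-N-\alpha$ or $Nq-N-\alpha$) exceed $2$. Following the reduction described in the introduction, for each such term we choose $\tilde\tau_i\in[0,\tau_i)$ with $0\le\beta_i-2\tilde\tau_i<2$. We then write $\varepsilon^{\tau_i\sigma}\|\nabla w_\varepsilon\|_2^{\beta_i}=\varepsilon^{(\tau_i-\tilde\tau_i)\sigma}(\varepsilon^\sigma\|\nabla w_\varepsilon\|_2^2)^{\tilde\tau_i}\|\nabla w_\varepsilon\|_2^{\beta_i-2\tilde\tau_i}$ and use $\varepsilon^\sigma\|\nabla w_\varepsilon\|_2^2\lesssim1$. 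The key verification is that such $\tilde\tau_i$ exists, i.e.\ that $\beta_i-2\tau_i<2$. This reduces to exponent bookkeeping using $q<\frac{N+\alpha+4}{N}$ and $p\le2^*_\alpha$ (the borderline $p=2^*_\alpha$ may require taking $p$ slightly below $2^*_\alpha$ or exploiting the $\delta$-smallness at infinity).

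With all exponents on the right strictly below $2$, because $\frac{Nq-N-\alpha}{2}<2$ follows from $q<\frac{N+\alpha+4}{N}$, the inequality $\|\nabla w_\varepsilon\|_2^2\lesssim\|\nabla w_\varepsilon\|_2^{\frac{Nq-N-\alpha}{2}}+\sum_i\|\nabla w_\varepsilon\|_2^{\beta_i-2\tilde\tau_i}$ yields $\|\nabla w_\varepsilon\|_2\lesssim1$. Together with the $L^2$ bound this gives boundedness in $H^1$. Feeding the bound back into the estimate of $\mathcal R_\varepsilon$, all terms with positive powers of $\varepsilon$ vanish, and the $\delta$-term is $O(\delta)$. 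Since $\delta$ is arbitrary, $\mathcal R_\varepsilon=o_\varepsilon(1)$, which is \eqref{e45}.
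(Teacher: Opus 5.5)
Your proposal is correct and follows essentially the same route as the paper. Both arguments combine the Nehari and Poho\v{z}aev identities, divide by $\varepsilon^\sigma=\varepsilon_1^{-1}\varepsilon_2^q$, split off $\frac{a[Nq-(N+\alpha)]}{2q}|w_\varepsilon|^q$, bound the remainder by Hardy--Littlewood--Sobolev and Gagliardo--Nirenberg, and trade powers of $\varepsilon$ against $\varepsilon^\sigma\|\nabla w_\varepsilon\|_2^2\lesssim 1$ until every exponent of $\|\nabla w_\varepsilon\|_2$ is below $2$.

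The exponent bookkeeping you left open does close at $p=2^*_\alpha$, with no borderline issue. Since $\varepsilon_2=\varepsilon^{\frac{N(2+\alpha)}{4\alpha}}(\varepsilon^{\sigma})^{N/4}$, you get $\varepsilon_2\|\nabla w_\varepsilon\|_2^{N/2}\lesssim\varepsilon^{\frac{N(2+\alpha)}{4\alpha}}\to 0$; this is exactly the paper's device. After this absorption the only exponents left are $\frac{Nq-N-\alpha}{2}$ and $Nq-N-\alpha-2$, both below $2$. Note also that {\bf (H1)} does not let you take $p<2^*_\alpha$, so your ``any fixed $p\in(q,2^*_\alpha]$'' should be restricted to $p=2^*_\alpha$.
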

\begin{proof}
First, since $G(s)\ge 0$ for all $s\ge 0$, by the min-max descriptions of $\tilde m_\varepsilon$ and $\tilde  m_0$, it is easy to show that $\tilde m_{\varepsilon}\leq \tilde m_{0}$. Also, according to $w_{\varepsilon}\in\mathcal{P}_{\varepsilon}$, we have
$$
\begin{aligned}
\tilde m_{0}\geq \tilde m_{\varepsilon}&=J_{\varepsilon}(w_{\varepsilon})=J_{\varepsilon}(w_{\varepsilon})-\frac{1}{N+\alpha}P_{\varepsilon}(w_{\varepsilon})\\
&=\frac{2+\alpha}{2(N+\alpha)}\int_{\mathbb R^N}\varepsilon^{\sigma}|\nabla w_{\varepsilon}|^2
+\frac{\alpha}{2(N+\alpha)}\int_{\mathbb R^N}|w_{\varepsilon}|^2.\\
\end{aligned}
$$
Hence,
we have $\{w_{\varepsilon}\}$ is bounded in $L^2(\mathbb R^N)$ and $\varepsilon^{\sigma}\|\nabla w_{\varepsilon}\|_2^2$ is bounded.
To show that $\{w_{\varepsilon}\}$ is bounded in $H^1(\mathbb R^N)$, it suffices to show that $\{w_{\varepsilon}\}$ is also bounded in $\mathcal{D}^{1,2}(\mathbb R^N)$.
Since $w_{\varepsilon} \in \mathcal{N}_{\varepsilon}\cap \mathcal{P}_{\varepsilon}$, we obtain
\begin{equation}\label{e46}
\begin{aligned}
\varepsilon^{\sigma}\|\nabla w_{\varepsilon}\|_2^2&=
\int_{\mathbb R^N}(I_{\alpha}*|w_{\varepsilon}|^{\frac{N+\alpha}{N}})
\left[\frac{N}{2}\varepsilon_1^{-1}g(\varepsilon_2w_{\varepsilon})\varepsilon_2w_{\varepsilon}-\frac{N+\alpha}{2}\varepsilon_1^{-1}G(\varepsilon_2w_{\varepsilon})\right]dx\\
&+\int_{\mathbb R^N}(I_{\alpha}*\varepsilon_1^{-1}G(\varepsilon_2w_{\varepsilon}))
\left[\frac{N}{2}\varepsilon_1^{-1}g(\varepsilon_2w_{\varepsilon})\varepsilon_2w_{\varepsilon}-\frac{N+\alpha}{2}\varepsilon_1^{-1}G(\varepsilon_2w_{\varepsilon})\right]dx,\\
\end{aligned}
\end{equation}
hence
\begin{equation}\label{e47}
\begin{aligned}
\|\nabla w_{\varepsilon}\|_2^2&=
\int_{\mathbb R^N}(I_{\alpha}*|w_{\varepsilon}|^{\frac{N+\alpha}{N}})
\left[\frac{N}{2}g(\varepsilon_2w_{\varepsilon})\varepsilon_2w_{\varepsilon}-\frac{N+\alpha}{2}G(\varepsilon_2w_{\varepsilon})\right]\varepsilon_2^{-q}dx\\
&+\int_{\mathbb R^N}(I_{\alpha}*\varepsilon_1^{-1}G(\varepsilon_2w_{\varepsilon}))
\left[\frac{N}{2}g(\varepsilon_2w_{\varepsilon})\varepsilon_2w_{\varepsilon}-\frac{N+\alpha}{2}G(\varepsilon_2w_{\varepsilon})\right]\varepsilon_2^{-q}dx.\\
\end{aligned}
\end{equation}
Put $g(s)=as^{q-1}+\tilde g(s)$, then $G(s)=\frac{a}{q}s^{q}+\tilde G(s)$, where $\tilde G(s)=\int_0^s\tilde g(\tau)d\tau$.
By {\bf (H1)} and {\bf (H4)}, we have
$$
\lim_{s\to 0}\tilde g(s)/|s|^{q-1}=\lim_{s\to 0}\tilde G(s)/|s|^{q}=0, 
$$
and 
$$
\lim_{s\to \infty}\tilde g(s)/|s|^{\frac{2+\alpha}{N-2}}=\lim_{s\to \infty}\tilde G(s)/|s|^{\frac{N+\alpha}{N-2}}=0.
$$
Therefore, for any $\delta>0$, there exists a constant $C_{\delta}>0$ such that
\begin{equation}\label{e48}
\left|\frac{N}{2}\tilde g(s)s-\frac{N+\alpha}{2}\tilde G(s)\right|\le \delta |s|^{q}+C_{\delta}|s|^{\frac{N+\alpha}{N-2}}.
\end{equation}
According to \eqref{e47}, we have
\begin{equation}\label{e49}
\begin{aligned}
\|\nabla w_{\varepsilon}\|_2^2
&=\frac{a[Nq-(N+\alpha)]}{2q}\int_{\mathbb R^N}(I_{\alpha}\ast |w_{\varepsilon}|^{\frac{N+\alpha}{N}})|w_{\varepsilon}|^{q}\\
&+\frac{a[Nq-(N+\alpha)]}{2q}\int_{\mathbb R^N}(I_{\alpha}\ast \varepsilon_1^{-1}G(\varepsilon_2 w_\varepsilon))|w_{\varepsilon}|^{q}\\
&+\int_{\mathbb R^N}(I_\alpha\ast (|w_\varepsilon|^{\frac{N+\alpha}{N}}+\varepsilon_1^{-1}G(\varepsilon_2 w_\varepsilon)))
\mbox{} \cdot \left[\frac{N}{2}
\tilde g(\varepsilon_2 w_\varepsilon)\varepsilon_2 w_\varepsilon-\frac{N+\alpha}{2}\tilde G(\varepsilon_2 w_\varepsilon)\right]\varepsilon_2^{-q}dx.
\end{aligned}
\end{equation}
Recall that $\varepsilon^{\sigma}\|\nabla w_{\varepsilon}\|_2^2$ is bounded, by the Hardy-Littlewood-Sobolev inequality and  the Gagliardo-Nirenberg inequality,  we get  
$$
\begin{array}{rl}
&\varepsilon_1^{-1}\varepsilon_2^{q}\int_{\mathbb R^N}(I_{\alpha}\ast |w_\varepsilon|^{q})|w_{\varepsilon}|^{q}dx
\lesssim \|\nabla w_\varepsilon\|_2^{Nq-(N+\alpha+2)}, 
\end{array}
$$
and
$$
\begin{array}{rl}
\varepsilon_1^{-1}\varepsilon_2^{\frac{N+\alpha}{N-2}}\int_{\mathbb R^N}(I_{\alpha}\ast |w_\varepsilon|^{\frac{N+\alpha}{N-2}})|w_{\varepsilon}|^{q}dx
&\lesssim \varepsilon_2^{\frac{N+\alpha}{N-2}-q}\|\nabla w_\varepsilon\|_2^{\frac{N}{2}(\frac{N+\alpha}{N-2}+q)-(N+\alpha+2)}\\
&\lesssim \varepsilon^{\frac{N(2+\alpha)}{4\alpha}(\frac{N+\alpha}{N-2}-q)}\|\nabla w_\varepsilon\|_2^{Nq-(N+\alpha+2)}.
\end{array}
$$
Therefore, it follows from \eqref{e48} that
\begin{equation}\label{e410}
\begin{array}{rl}
&\int_{\mathbb R^N}(I_{\alpha}\ast \varepsilon_1^{-1}G(\varepsilon_2 w_\varepsilon))|w_{\varepsilon}|^{q}\\
&\lesssim
\varepsilon_1^{-1}\varepsilon_2^{q}\int_{\mathbb R^N}(I_{\alpha}\ast |w_\varepsilon|^{q})|w_{\varepsilon}|^{q}
+\varepsilon_1^{-1}\varepsilon_2^{\frac{N+\alpha}{N-2}}\int_{\mathbb R^N}(I_{\alpha}\ast |w_\varepsilon|^{\frac{N+\alpha}{N-2}})|w_{\varepsilon}|^{q}\\
&\lesssim \|\nabla w_\varepsilon\|_2^{Nq-(N+\alpha+2)}
+\varepsilon^{\frac{N(2+\alpha)}{4\alpha}(\frac{N+\alpha}{N-2}-q)}\|\nabla w_\varepsilon\|_2^{Nq-(N+\alpha+2)},
\end{array}
\end{equation}
Similarly, we have 
\begin{equation}\label{e411}
\begin{array}{rl}
&\int_{\mathbb R^N}(I_\alpha\ast \varepsilon_1^{-1}G(\varepsilon_2 w_\varepsilon))\left |\frac{N}{2}\tilde g(\varepsilon_2 w_\varepsilon)\varepsilon_2 w_\varepsilon-\frac{N+\alpha}{2}\tilde G(\varepsilon_2 w_\varepsilon)\right |\varepsilon_2^{-q}dx\\
&\lesssim \varepsilon_1^{-1}\varepsilon_2^{q}\int_{\mathbb R^N}(I_{\alpha}\ast |w_\varepsilon|^{q})|w_{\varepsilon}|^{q}
+\varepsilon_1^{-1}\varepsilon_2^{\frac{N+\alpha}{N-2}}\int_{\mathbb R^N}(I_{\alpha}\ast |w_\varepsilon|^{\frac{N+\alpha}{N-2}})|w_{\varepsilon}|^{q}\\
&\qquad +\varepsilon_1^{-1}\varepsilon_2^{\frac{2(N+\alpha)}{N-2}-q}\int_{\mathbb R^N}
(I_{\alpha}\ast |w_\varepsilon|^{\frac{N+\alpha}{N-2}})|w_{\varepsilon}|^{\frac{N+\alpha}{N-2}}\\
&\lesssim \|\nabla w_\varepsilon\|_2^{Nq-(N+\alpha+2)}
+\varepsilon^{\frac{N(2+\alpha)}{4\alpha}(\frac{N+\alpha}{N-2}-q)}\|\nabla w_\varepsilon\|_2^{Nq-(N+\alpha+2)}\\
&\qquad +\varepsilon^{\frac{N(2+\alpha)[(N+\alpha)-(N-2)q]}{2\alpha(N-2)}}\|\nabla w_{\varepsilon}\|_2^{Nq-(N+\alpha+2)},
\end{array}
\end{equation}
and 
\begin{equation}\label{e412}
\begin{array}{rl}
&\int_{\mathbb R^N}(I_\alpha\ast |w_\varepsilon|^{\frac{N+\alpha}{N}})\left|\frac{N}{2}\tilde g(\varepsilon_2 w_\varepsilon)\varepsilon_2 w_\varepsilon-\frac{N+\alpha}{2}\tilde G(\varepsilon_2 w_\varepsilon)\right|\varepsilon_2^{-q}dx\\
&\lesssim \delta \int_{\mathbb R^N}(I_\alpha\ast |w_\varepsilon|^{\frac{N+\alpha}{N}})|w_\varepsilon|^{q}
+C_{\delta}\varepsilon_2^{\frac{N+\alpha}{N-2}-q}\int_{\mathbb R^N}(I_\alpha\ast |w_\varepsilon|^{\frac{N+\alpha}{N}})|w_\varepsilon|^{\frac{N+\alpha}{N-2}}\\
&\lesssim \delta \|\nabla w_{\varepsilon}\|_2^{\frac{Nq-(N+\alpha)}{2}}+\varepsilon^{\frac{(2+\alpha)(N+\alpha)}{2\alpha(N-2)}}\|\nabla w_{\varepsilon}\|_2^2
\end{array}
\end{equation}
Since 
$$
\int_{\mathbb R^N}(I_{\alpha}\ast |w_{\varepsilon}|^{\frac{N+\alpha}{N}})|w_{\varepsilon}|^{q}dx\lesssim \|\nabla w_{\varepsilon}\|_2^{\frac{Nq-(N+\alpha)}{2}},
$$
 and  note that $Nq-(N+\alpha+2)<2$ and $\frac{Nq-(N+\alpha)}{2}<2,$ it follows from \eqref{e49}--\eqref{e412} that 
 $\{w_\varepsilon\}$ is bounded in $D^{1,2}(\mathbb R^N)$. Hence, $\{w_\varepsilon\}$ is bounded in $H^1(\mathbb R^N)$. 

On the other hand,  by the Hardy-Littlewood-Sobolev inequality,  it follows from \eqref{e410}  that 
$$
\begin{array}{rl}
&\int_{\mathbb R^N}(I_{\alpha}\ast \varepsilon_1^{-1}G(\varepsilon_2 w_\varepsilon))|w_{\varepsilon}|^{q}\\
&\lesssim
\varepsilon_1^{-1}\varepsilon_2^{q}\int_{\mathbb R^N}(I_{\alpha}\ast |w_\varepsilon|^{q})|w_{\varepsilon}|^{q}dx
+\varepsilon_1^{-1}\varepsilon_2^{\frac{N+\alpha}{N-2}}\int_{\mathbb R^N}(I_{\alpha}\ast |w_\varepsilon|^{\frac{N+\alpha}{N-2}})|w_{\varepsilon}|^{q}\\
&\lesssim \varepsilon^\sigma \|w_\varepsilon\|_{\frac{2Nq}{N+\alpha}}^{2q}+\varepsilon^\sigma \varepsilon_2^{\frac{N+\alpha}{N-2}-q}\|w_\varepsilon\|_{2^*}^{\frac{N+\alpha}{N-2}}\|w_\varepsilon\|_{\frac{2Nq}{N+\alpha}}^q,
\end{array}
$$
which together with the  boundedness of $\{w_\varepsilon\}$ in $H^1(\mathbb R^N)$ implies that
$$
\lim_{\varepsilon\to 0}\int_{\mathbb R^N}(I_{\alpha}\ast \varepsilon_1^{-1}G(\varepsilon_2 w_\varepsilon))|w_{\varepsilon}|^{q}=0.
$$
Similarly, from \eqref{e411} and \eqref{e412} we get 
$$
\lim_{\varepsilon \to 0}
\int_{\mathbb R^N}(I_\alpha\ast \varepsilon_1^{-1}G(\varepsilon_2 w_\varepsilon))\left|\frac{N}{2}
\tilde g(\varepsilon_2 w_\varepsilon)\varepsilon_2 w_\varepsilon-\frac{N+\alpha}{2}\tilde G(\varepsilon_2 w_\varepsilon)\right|\varepsilon_2^{-q}dx=0,
$$
and 
$$
\limsup_{\varepsilon \to 0}
\int_{\mathbb R^N}(I_\alpha\ast |w_\varepsilon|^{\frac{N+\alpha}{N}})\left |\frac{N}{2}
\tilde g(\varepsilon_2 w_\varepsilon)\varepsilon_2 w_\varepsilon-\frac{N+\alpha}{2}\tilde G(\varepsilon_2 w_\varepsilon)\right|\varepsilon_2^{-q}dx\lesssim \delta,
$$
Because $\delta$ is arbitrary,  by \eqref{e49}, we obtain
$$\|\nabla w_{\varepsilon}\|_2^2=\frac{a[Nq-(N+\alpha)]}{2q}
\int_{\mathbb R^N}(I_{\alpha}*|w_{\varepsilon}|^{\frac{N+\alpha}{N}})|w_{\varepsilon}|^{q}+o_{\varepsilon}(1).$$
The proof is complete.
\end{proof}

For $w \in H^1(\mathbb R^N)\backslash\{0\}$, let
\begin{equation}\label{e413}
\tau(w)=\frac{N}{N+\alpha}\cdot\frac{\int_{\mathbb R^N}|w|^2}{\int_{\mathbb R^N}(I_{\alpha}*|w|^{\frac{N+\alpha}{N}})|w|^{\frac{N+\alpha}{N}}},
\end{equation}
then $\tau(w)^{\frac{N}{2\alpha}}w \in \mathcal{N}_{0}$ for any $w \in H^1(\mathbb R^N)\backslash\{0\}$ and $w \in \mathcal{N}_{0}$ if and only if $\tau(w)=1$.

\begin{lemma}\label{lem43}
Under the conditions {\bf(H1)} and {\bf(H4)}, then  
there exists a constant $C>0$ such that for any small $\varepsilon>0$, there holds
\begin{equation}\label{e414}
1-C\varepsilon^{\sigma} \leq \tau(w_{\varepsilon})\leq 1+C\varepsilon^{\sigma}.
\end{equation}
\end{lemma}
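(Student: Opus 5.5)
The plan is to compare the Nehari identity for $w_\varepsilon$ with the defining relation of $\mathcal N_0$. Write $\mathbb D(w)$ as in \eqref{e44}. Since $w_\varepsilon\in\mathcal N_\varepsilon$,
$$
\|w_\varepsilon\|_2^2-\frac{N+\alpha}{N}\mathbb D(w_\varepsilon)=-\varepsilon^\sigma\|\nabla w_\varepsilon\|_2^2+\mathcal R_\varepsilon ,
$$
where $\mathcal R_\varepsilon$ collects the three cross terms:
$$
\int(I_\alpha*|w_\varepsilon|^{\frac{N+\alpha}{N}})\,\varepsilon_1^{-1}\varepsilon_2 g(\varepsilon_2w_\varepsilon)w_\varepsilon,\qquad
\frac{N+\alpha}{N}\int(I_\alpha*\varepsilon_1^{-1}G(\varepsilon_2w_\varepsilon))|w_\varepsilon|^{\frac{N+\alpha}{N}},
$$
$$
\int(I_\alpha*\varepsilon_1^{-1}G(\varepsilon_2w_\varepsilon))\,\varepsilon_1^{-1}\varepsilon_2g(\varepsilon_2w_\varepsilon)w_\varepsilon .
$$
Dividing by $\frac{N+\alpha}{N}\mathbb D(w_\varepsilon)$ gives
$$
\tau(w_\varepsilon)-1=\frac{N\left(-\varepsilon^\sigma\|\nabla w_\varepsilon\|_2^2+\mathcal R_\varepsilon\right)}{(N+\alpha)\,\mathbb D(w_\varepsilon)} .
$$
So \eqref{e414} follows from two facts: (a) $|\mathcal R_\varepsilon|\lesssim\varepsilon^\sigma$, and (b) $\mathbb D(w_\varepsilon)\gtrsim 1$. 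Here we use that $\|\nabla w_\varepsilon\|_2$ is bounded by Lemma \ref{lem42}.

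For (a) I will use {\bf (H1)} and {\bf (H4)} in the crude form $g(s)\lesssim s^{q-1}+s^{\frac{2+\alpha}{N-2}}$ and $G(s)\lesssim s^q+s^{\frac{N+\alpha}{N-2}}$, together with the scaling relation \eqref{e43}, $\varepsilon_1^{-1}\varepsilon_2^q=\varepsilon^\sigma$.

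The first two cross terms are bounded by
$$
\varepsilon^\sigma\int(I_\alpha*|w_\varepsilon|^{\frac{N+\alpha}{N}})|w_\varepsilon|^q+\varepsilon^\sigma\varepsilon_2^{\frac{N+\alpha}{N-2}-q}\int(I_\alpha*|w_\varepsilon|^{\frac{N+\alpha}{N}})|w_\varepsilon|^{\frac{N+\alpha}{N-2}} .
$$
The third term is bounded by $\varepsilon_1^{-2}\varepsilon_2^{2q}$ times HLS quantities. Since $\varepsilon_1^{-2}\varepsilon_2^{2q}=\varepsilon^{2\sigma}\varepsilon_1\le\varepsilon^\sigma$, this is no worse. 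The critical-power pieces carry the extra small factors $\varepsilon_2^{\frac{N+\alpha}{N-2}-q}$.

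All of these HLS integrals are bounded by Lemma \ref{lem21} together with the Sobolev and Gagliardo–Nirenberg inequalities (Lemma \ref{lem23}), exactly as in \eqref{e410}–\eqref{e412}. The difference is that the $H^1$ bound of Lemma \ref{lem42} is now available, so no powers of $\|\nabla w_\varepsilon\|_2$ need to be absorbed. This yields $|\mathcal R_\varepsilon|\le C\varepsilon^\sigma$.

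For (b), the main obstacle is a uniform lower bound. I will obtain it from the energy via the identity in the proof of Lemma \ref{lem42}:
$$
\tilde m_\varepsilon=\frac{2+\alpha}{2(N+\alpha)}\varepsilon^\sigma\|\nabla w_\varepsilon\|_2^2+\frac{\alpha}{2(N+\alpha)}\|w_\varepsilon\|_2^2 .
$$
By Lemma \ref{lem42} the gradient term is $O(\varepsilon^\sigma)$, so it suffices to show $\tilde m_\varepsilon\ge c>0$ uniformly for small $\varepsilon$.

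To get this, I use the min-max characterization $\tilde m_\varepsilon=\sup_{t\ge0}J_\varepsilon(tw_\varepsilon)$ and compare with $J_0$:
$$
J_\varepsilon(tw)\ge J_0(tw)-\text{(cross terms at }tw).
$$
For $t$ in a bounded range the cross terms are $O(\varepsilon^\sigma)$ by the estimates of (a). Evaluating at $t=\tau(w_\varepsilon)^{N/(2\alpha)}$ gives
$$
J_0(tw_\varepsilon)\ge\tilde m_0=\frac{\alpha}{2N}\left(\frac{N}{N+\alpha}S_\alpha\right)^{\frac{N+\alpha}{\alpha}} .
$$
Hence $\tilde m_\varepsilon\ge\tilde m_0-C\varepsilon^\sigma$, and therefore $\|w_\varepsilon\|_2^2\gtrsim1$. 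The Nehari identity with (a) then gives $\frac{N+\alpha}{N}\mathbb D(w_\varepsilon)=\|w_\varepsilon\|_2^2+O(\varepsilon^\sigma)\gtrsim1$.

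The one delicate point is that this argument requires $\tau(w_\varepsilon)$ to be bounded a priori. I will get this from HLS: $\mathbb D(w)\le C\|w\|_2^{2(N+\alpha)/N}$, so $\tau(w)\ge c\|w\|_2^{-2\alpha/N}$, which is bounded below. For the upper bound on $\tau(w_\varepsilon)$, I will use the Nehari identity: $\|w_\varepsilon\|_2^2\le\frac{N+\alpha}{N}\mathbb D(w_\varepsilon)+C\varepsilon^\sigma$. If $\mathbb D(w_\varepsilon)\to0$ along a sequence, this forces $\|w_\varepsilon\|_2\to0$. Then HLS gives $\mathbb D=o(\|w_\varepsilon\|_2^2)$, and a refinement of (a) keeping the factor $\|w_\varepsilon\|_2^{(N+\alpha)/N}$ in each cross term is needed to close the contradiction.

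If that refinement proves awkward, I will fall back on the energy argument applied with $t$ close to $1$ to obtain the contradiction directly. Combining (a) and (b) then gives $|\tau(w_\varepsilon)-1|\le C\varepsilon^\sigma$.
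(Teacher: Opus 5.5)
Your step (a) is fine: with the $H^1$ bound of Lemma \ref{lem42} you do get $0\le\mathcal R_\varepsilon\le C\varepsilon^\sigma$. (Incidentally $\varepsilon_1^{-2}\varepsilon_2^{2q}=\varepsilon^{2\sigma}$, not $\varepsilon^{2\sigma}\varepsilon_1$, which is harmless.)

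\textbf{The gap is in step (b).}

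\emph{The main argument is circular.} Your bound $\tilde m_\varepsilon\ge\tilde m_0-C\varepsilon^\sigma$ needs the cross terms at $t_0w_\varepsilon$, with $t_0=\tau(w_\varepsilon)^{N/(2\alpha)}$, to be $O(\varepsilon^\sigma)$. That requires an a priori upper bound on $\tau(w_\varepsilon)$. But by HLS and the Nehari identity with $\mathcal R_\varepsilon\le C\varepsilon^\sigma$, the three statements ``$\tau(w_\varepsilon)$ bounded above'', ``$\|w_\varepsilon\|_2\gtrsim1$'' and ``$\mathbb D(w_\varepsilon)\gtrsim1$'' are equivalent. So the argument assumes exactly what (b) is meant to prove.

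\emph{The proposed escape does not close.} Suppose $\mathbb D(w_\varepsilon)\to0$. Keeping the factor $\|w_\varepsilon\|_2^{(N+\alpha)/N}$ in the cross terms only gives $\|w_\varepsilon\|_2^2\le o(\|w_\varepsilon\|_2^2)+C\varepsilon^\sigma\|w_\varepsilon\|_2^{(N+\alpha)/N}$, i.e.\ $\|w_\varepsilon\|_2^{(N-\alpha)/N}\lesssim\varepsilon^\sigma$. Since $(N+\alpha)/N<2$, this is perfectly consistent with $\|w_\varepsilon\|_2\to0$. The fallback ``energy argument with $t$ close to $1$'' presupposes $\tau(w_\varepsilon)\approx1$, which is the claim itself.

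\textbf{The missing idea.} You cannot discard the sign of $-\varepsilon^\sigma\|\nabla w_\varepsilon\|_2^2$ by calling it $O(\varepsilon^\sigma)$. The paper writes $\tau=1+\tau H$ with $H=(\mathcal R_\varepsilon-\varepsilon^\sigma\|\nabla w_\varepsilon\|_2^2)/\|w_\varepsilon\|_2^2$. It bounds the leading cross term by $\varepsilon^\sigma\|w_\varepsilon\|_2^{b}\|\nabla w_\varepsilon\|_2^{\beta}$, where $b+\beta=\frac{N+\alpha}{N}+q>2$ and $\beta=\frac{Nq-(N+\alpha)}{2}<2$; the latter is exactly where $q<\frac{N+\alpha+4}{N}$ enters. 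With $x=\|\nabla w_\varepsilon\|_2/\|w_\varepsilon\|_2$ this gives $\varepsilon^{-\sigma}H\le Cx^\beta-\frac12x^2+o(1)\le C$, however small $\|w_\varepsilon\|_2$ might be. Hence $\tau\le1+C\varepsilon^\sigma$ comes first. HLS then gives $\|w_\varepsilon\|_2\gtrsim1$, and the lower bound follows from $\mathcal R_\varepsilon\ge0$.

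\textbf{An alternative repair.} Your reduction of (b) to $\tilde m_\varepsilon\ge c>0$ can be completed without going through $\tau$ at all.
\begin{itemize}
\item Work in the $v$-scaling \eqref{e31}. Every term on the right of $\tilde N_\varepsilon(v)=0$ is bounded by a superquadratic power of $\|v\|_{H^1}$, with coefficient a positive power of $\varepsilon$, hence $\lesssim1$ for $\varepsilon\le1$.
\item So $\|v\|_{H^1}\ge c$ on $\tilde{\mathcal N}_\varepsilon$, and $\tilde m_\varepsilon=\frac{2+\alpha}{2(N+\alpha)}\|\nabla v_\varepsilon\|_2^2+\frac{\alpha}{2(N+\alpha)}\|v_\varepsilon\|_2^2\ge c'$.
\item Combined with $\varepsilon^\sigma\|\nabla w_\varepsilon\|_2^2=O(\varepsilon^\sigma)$ from Lemma \ref{lem42}, this yields $\|w_\varepsilon\|_2\gtrsim1$, after which your step (a) finishes the proof.
\end{itemize}
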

\begin{proof}
Since $w_{\varepsilon} \in \mathcal{N}_{\varepsilon}$,  it follows  that
\begin{equation}\label{e415}
\tau(w_{\varepsilon})=\frac{N}{N+\alpha}\cdot\frac{\int_{\mathbb R^N}|w_{\varepsilon}|^2}
{\int_{\mathbb R^N}(I_{\alpha}*|w_{\varepsilon}|^{\frac{N+\alpha}{N}})|w_{\varepsilon}|^{\frac{N+\alpha}{N}}}
=1+\tau(w_\varepsilon)H(w_\varepsilon),
\end{equation}
where
\begin{equation}\label{e416}
\begin{aligned}
&H(w_{\varepsilon})=\frac{\int_{\mathbb R^N}(I_{\alpha}*\varepsilon_1^{-1}G(\varepsilon_2w_{\varepsilon}))\varepsilon_1^{-1}\varepsilon_2g(\varepsilon_2w_{\varepsilon})w_{\varepsilon}}
{\int_{\mathbb R^N}|w_{\varepsilon}|^2}\\
+&\frac{\int_{\mathbb R^N}(I_{\alpha}*|w_{\varepsilon}|^{\frac{N+\alpha}{N}})(\frac{N+\alpha}{N}\varepsilon_1^{-1}G(\varepsilon_2w_{\varepsilon})
+\varepsilon_1^{-1}\varepsilon_2g(\varepsilon_2w_{\varepsilon})w_{\varepsilon})-\varepsilon^{\sigma}\int_{\mathbb R^N}|\nabla w_{\varepsilon}|^2}
{\int_{\mathbb R^N}|w_{\varepsilon}|^2}.
\end{aligned}
\end{equation}
It follows from {\bf (H1)} and {\bf (H4)} that $g(s)s, G(s)\le C_1s^q+C_2s^{\frac{N+\alpha}{N-2}}$ for all $s\ge 0$ and some $C_1,C_2>0$. Therefore, we get
\begin{equation}\label{e417}
\begin{aligned}
&\int_{\mathbb R^N}(I_{\alpha}*\varepsilon_1^{-1}G(\varepsilon_2w_{\varepsilon}))\varepsilon_1^{-1}\varepsilon_2g(\varepsilon_2w_{\varepsilon})w_{\varepsilon}\\
&\lesssim \varepsilon_1^{-2}\varepsilon_2^{2q}\int_{\mathbb R^N}(I_\alpha\ast |w_\varepsilon|^q)|w_\varepsilon|^q
+\varepsilon_1^{-2}\varepsilon_2^{q+\frac{N+\alpha}{N-2}}\int_{\mathbb R^N}(I_\alpha\ast |w_\varepsilon|^q)|w_\varepsilon|^{\frac{N+\alpha}{N-2}}\\
&+\varepsilon_1^{-2}\varepsilon_2^{2\frac{N+\alpha}{N-2}}\int_{\mathbb R^N}(I_\alpha\ast |w_\varepsilon|^{\frac{N+\alpha}{N-2}})|w_\varepsilon|^{\frac{N+\alpha}{N-2}},
\end{aligned}
\end{equation}
and 
\begin{equation}\label{e418}
\begin{aligned}
&\int_{\mathbb R^N}(I_{\alpha}\ast |w_\varepsilon|^{\frac{N+\alpha}{N}})(\frac{N+\alpha}{N}\varepsilon_1^{-1}G(\varepsilon_2w_{\varepsilon})
+\varepsilon_1^{-1}\varepsilon_2g(\varepsilon_2w_{\varepsilon})w_{\varepsilon})dx\\
&\lesssim \varepsilon_1^{-1}\varepsilon_2^{q}\int_{\mathbb R^N}(I_\alpha\ast |w_\varepsilon|^{\frac{N+\alpha}{N}})|w_\varepsilon|^q
+\varepsilon_1^{-1}\varepsilon_2^{\frac{N+\alpha}{N-2}}\int_{\mathbb R^N}(I_\alpha\ast |w_\varepsilon|^{\frac{N+\alpha}{N}})|w_\varepsilon|^{\frac{N+\alpha}{N-2}}.
\end{aligned}
\end{equation}
By the Hardy-Littlewood-Sobolev, the Sobolev and the interpolation inequalities, it is easy to see that  for any $t,s\in [\frac{N+\alpha}{N},\frac{N+\alpha}{N-2}]$, there holds
$$
\int_{\mathbb R^N}(I_\alpha \ast |w_\varepsilon|^t)|w_\varepsilon|^s\lesssim \|w_\varepsilon\|_2^{N+\alpha-\frac{t+s}{2}(N-2)}\|\nabla w_\varepsilon\|_2^{N\frac{t+s}{2}-N-\alpha}.
$$
Note that $N+\alpha-\frac{t+s}{2}(N-2)+N\frac{t+s}{2}-N-\alpha=t+s\in [\frac{2(N+\alpha)}{N},\frac{2(N+\alpha)}{N-2}]$, 
we have 
$$
\frac{N+\alpha-\frac{t+s}{2}(N-2)}{t+s}+\frac{N\frac{t+s}{2}-N-\alpha}{t+s}=1.
$$
 By Hardy-Littlewood-Sobolev,  Young inequalities and the boundedness of $\{w_\varepsilon\}$ in $H^1(\mathbb R^N)$, for $t+s\in (2\frac{N+\alpha}{N}, 2\frac{N+\alpha}{N-2})$, we have 
 \begin{equation}\label{e419}
\begin{aligned}
\int_{\mathbb R^N}(I_\alpha \ast |w_\varepsilon|^t)|w_\varepsilon|^s&\lesssim \frac{N+\alpha-\frac{t+s}{2}(N-2)}{t+s}\|w_\varepsilon\|_2^{t+s}+\frac{N\frac{t+s}{2}-N-\alpha}{t+s}\|\nabla w_\varepsilon\|_2^{t+s}\\
&\lesssim \|w_\varepsilon\|_2^2+\|\nabla w_\varepsilon\|_2^2.
\end{aligned}
\end{equation}
Therefore, by $\varepsilon^\sigma=\varepsilon_1^{-1}\varepsilon_2^q$, \eqref{e416}, \eqref{e417}, \eqref{e418}, \eqref{e419} and the boundedness of $\{w_\varepsilon\}$ in $H^1(\mathbb R^N)$, we have 
$$\begin{aligned}
\varepsilon^{-\sigma}H(w_\varepsilon)\|w_\varepsilon\|_2^2+\|\nabla w_\varepsilon\|_2^2\lesssim  &\|w_\varepsilon\|_2^{N+\alpha-\frac{1}{2}(\frac{N+\alpha}{N}+q)(N-2)}\|\nabla w_\varepsilon\|_2^{\frac{1}{2}N(\frac{N+\alpha}{N}+q)-N-\alpha}\\
&+\varepsilon_2^{\frac{N+\alpha}{N-2}-q}(\|w_\varepsilon\|_2^{\frac{N+\alpha}{N}+\frac{N+\alpha}{N-2}}+\|\nabla w_\varepsilon\|_2^{\frac{N+\alpha}{N}+\frac{N+\alpha}{N-2}})\\
&+\varepsilon^\sigma (\|w_\varepsilon\|_2^{2q}+\|\nabla w_\varepsilon\|_2^{2q})\\
&+\varepsilon^{\sigma}\varepsilon_2^{\frac{N+\alpha}{N-2}-q}(\|w_\varepsilon\|_2^{q+\frac{N+\alpha}{N-2}}+\|\nabla w_\varepsilon\|_2^{q+\frac{N+\alpha}{N-2}})\\
&+\varepsilon^\sigma\varepsilon_2^{2(\frac{N+\alpha}{N-2}-q)}\|\nabla w_\varepsilon\|^{\frac{2(N+\alpha)}{N-2}}\\
\lesssim  &\|w_\varepsilon\|_2^{N+\alpha-\frac{1}{2}(\frac{N+\alpha}{N}+q)(N-2)}\|\nabla w_\varepsilon\|_2^{\frac{1}{2}N(\frac{N+\alpha}{N}+q)-N-\alpha}\\
&+o_\varepsilon(1)(\|w_\varepsilon\|_2^2+\|\nabla w_\varepsilon\|_2^2).
\end{aligned}
$$
Note that $q<\frac{N+\alpha+4}{N}$ implies that $\frac{1}{2}N(\frac{N+\alpha}{N}+q)-N-\alpha<2$, it follows from  the boundedness of $\{w_\varepsilon\}$ in $H^1(\mathbb R^N)$ that  
$$\begin{aligned}
\varepsilon^{-\sigma}H(w_{\varepsilon}) 
\lesssim  &\frac{1}{\|w_\varepsilon\|_2^2}\|w_\varepsilon\|_2^{N+\alpha-\frac{1}{2}(\frac{N+\alpha}{N}+q)(N-2)}\|\nabla w_\varepsilon\|_2^{\frac{1}{2}N(\frac{N+\alpha}{N}+q)-N-\alpha}\\
&-\frac{\|\nabla w_\varepsilon\|_2^2}{\|w_\varepsilon\|_2^2}
+o_\varepsilon(1)(1+\frac{\|\nabla w_\varepsilon\|_2^2}{\|w_\varepsilon\|_2^2})\\ 
\lesssim & C\left(\frac{\|\nabla w_{\varepsilon}\|_2}{\|w_{\varepsilon}\|_2}\right)^{\frac{1}{2}N(\frac{N+\alpha}{N}+q)-N-\alpha}-\frac{1}{2}\left(\frac{\|\nabla w_{\varepsilon}\|_2}{\|w_{\varepsilon}\|_2}\right)^2+o_\varepsilon(1)<C<+\infty.
\end{aligned}$$
Therefore, we obtain $H(w_\varepsilon)\lesssim \varepsilon^\sigma$, and hence 
$$
\tau(w_{\varepsilon})=1+\tau(w_\varepsilon)H(w_\varepsilon)\le 1+C\varepsilon^{\sigma}\tau(w_{\varepsilon}),
$$
therefore, for small $\varepsilon>0$, there holds
$$\tau(w_{\varepsilon})\leq \frac{1}{1-C\varepsilon^{\sigma}}=\frac{1-C\varepsilon^{\sigma}+C\varepsilon^{\sigma}}{1-C\varepsilon^{\sigma}}\leq 1+C\varepsilon^{\sigma}.$$
On the  other hand,  from \eqref{e413} and  Hardy-Littlewood-Sobolev inequality, we get
$$\int_{\mathbb R^N}|w_{\varepsilon}|^2
=\frac{N+\alpha}{N}\tau(w_{\varepsilon})\int_{\mathbb R^N}(I_{\alpha}*|w_{\varepsilon}|^{\frac{N+\alpha}{N}})|w_{\varepsilon}|^{\frac{N+\alpha}{N}}
\leq C\tau(w_{\varepsilon})(\int_{\mathbb R^N}|w_{\varepsilon}|^2)^{\frac{N+\alpha}{N}},$$
and hence
$$\int_{\mathbb R^N}|w_{\varepsilon}|^2\geq(C\tau(w_{\varepsilon}))^{-\frac{N}{\alpha}}\geq C>0.$$
Therefore, by \eqref{e416} and  the boundedness of $\{w_{\varepsilon}\}$ in $H^1(\mathbb R^N)$, we have
$$
H(w_\varepsilon)\ge -\varepsilon^\sigma\frac{\int_{\mathbb R^N}|\nabla w_\varepsilon|^2}{\int_{\mathbb R^N}|w_\varepsilon|^2}\ge  -C\varepsilon^\sigma, 
$$
and hence 
$$
\tau(w_{\varepsilon})=1+\tau(w_\varepsilon)H(w_\varepsilon)\ge 1-C\varepsilon^{\sigma}\tau(w_{\varepsilon}),
$$
which together with the fact that $\tau(w_\varepsilon)\le 1+C\varepsilon^\sigma$ implies that  $\tau(w_{\varepsilon})\geq1-C\varepsilon^{\sigma}$.
The proof is complete.
\end{proof}

\begin{lemma}\label{lem44}
Under the conditions {\bf(H1)} and {\bf(H4)}, we have
$$\tilde m_0-\tilde m_{\varepsilon}\sim \varepsilon^{\sigma}\ \ \ \mbox{as}\ \ \ \varepsilon\ \to\ 0.$$
\end{lemma}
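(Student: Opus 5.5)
We prove the two-sided estimate $\tilde m_0-\tilde m_\varepsilon\sim\varepsilon^\sigma$ by an upper bound from a test function and a lower bound from the ground states $w_\varepsilon$ together with Lemma \ref{lem43}.

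\emph{Lower bound $\tilde m_0-\tilde m_\varepsilon\gtrsim\varepsilon^\sigma$.} We use the rescaled extremal $W_\rho$, normalized so that $W_\rho\in\mathcal N_0$; for every $\rho>0$ this gives $J_0(W_\rho)=\tilde m_0$ and $\|W_\rho\|_2^2=\frac{N+\alpha}{N}\mathbb D(W_\rho)$. Since $G\ge 0$ we have
\[
\tilde m_\varepsilon\le\sup_{t\ge0}J_\varepsilon(tW_\rho)\le \sup_{t\ge 0}\Big[\tfrac{t^2}{2}\big(\varepsilon^\sigma\|\nabla W_\rho\|_2^2+\|W_\rho\|_2^2\big)-\tfrac{t^{2(N+\alpha)/N}}{2}\mathbb D(W_\rho)-t^{\frac{N+\alpha}{N}}\int(I_\alpha*|W_\rho|^{\frac{N+\alpha}{N}})\varepsilon_1^{-1}G(\varepsilon_2 tW_\rho)\Big].
\]
The maximizer satisfies $t_\varepsilon\to1$, because the perturbation terms are $O(\varepsilon^\sigma)$. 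By {\bf (H4)}, Fatou's lemma and \eqref{e43},
\[
\varepsilon_1^{-1}G(\varepsilon_2 tW_\rho)=\varepsilon^\sigma\varepsilon_2^{-q}G(\varepsilon_2tW_\rho)\ge \varepsilon^\sigma\tfrac{a}{2q}t^q|W_\rho|^q
\]
asymptotically in integrated form. This yields
\[
\tilde m_\varepsilon\le \tilde m_0+\varepsilon^\sigma\Big(\tfrac12\|\nabla W_\rho\|_2^2-\tfrac{a}{2q}\int(I_\alpha*|W_\rho|^{\frac{N+\alpha}{N}})|W_\rho|^q\Big)+o(\varepsilon^\sigma).
\]
Under $W_\rho=\rho^{-N/2}W_1(\cdot/\rho)$, the gradient term scales like $\rho^{-2}$ and the cross term like $\rho^{-(Nq-N-\alpha)/2}$. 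Because $q<\frac{N+\alpha+4}{N}$, the exponent $(Nq-N-\alpha)/2$ is strictly less than $2$. Hence choosing $\rho$ large makes the bracket negative, and we obtain $\tilde m_0-\tilde m_\varepsilon\ge c\varepsilon^\sigma$. Note that $W_1\in H^1$ requires $N\ge3$ with decay $|x|^{-N}$, which holds.

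\emph{Upper bound $\tilde m_0-\tilde m_\varepsilon\lesssim\varepsilon^\sigma$.} We take the ground state $w_\varepsilon$ and test $\tilde m_0$ with $\tau(w_\varepsilon)^{N/(2\alpha)}w_\varepsilon\in\mathcal N_0$. On $\mathcal N_0$ we have $J_0(v)=\frac{\alpha}{2(N+\alpha)}\|v\|_2^2$, so
\[
\tilde m_0\le\frac{\alpha}{2(N+\alpha)}\tau(w_\varepsilon)^{N/\alpha}\|w_\varepsilon\|_2^2 .
\]
From the proof of Lemma \ref{lem42},
\[
\tilde m_\varepsilon=\frac{2+\alpha}{2(N+\alpha)}\varepsilon^\sigma\|\nabla w_\varepsilon\|_2^2+\frac{\alpha}{2(N+\alpha)}\|w_\varepsilon\|_2^2\ge\frac{\alpha}{2(N+\alpha)}\|w_\varepsilon\|_2^2 .
\]
Combining these with Lemma \ref{lem43}, which gives $\tau(w_\varepsilon)^{N/\alpha}\le1+C\varepsilon^\sigma$, and with the boundedness of $\|w_\varepsilon\|_2$, we obtain
\[
\tilde m_0-\tilde m_\varepsilon\le\frac{\alpha}{2(N+\alpha)}\|w_\varepsilon\|_2^2\big(\tau(w_\varepsilon)^{N/\alpha}-1\big)\lesssim\varepsilon^\sigma .
\]

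\emph{Main obstacle.} The upper bound is immediate from earlier lemmas. The delicate step is the lower-bound expansion: we must justify that the maximizer $t_\varepsilon$ of the fiber map deviates from $1$ only by $O(\varepsilon^\sigma)$, so that the second-order error is $o(\varepsilon^\sigma)$. We must also pass to the limit in $\varepsilon_2^{-q}G(\varepsilon_2 tW_\rho)\to\frac aq t^q W_\rho^q$. This uses the growth bound $G(s)\le C_1s^q+C_2s^{\frac{N+\alpha}{N-2}}$, the Hardy–Littlewood–Sobolev inequality, and dominated convergence; the fixed $W_\rho$ lies in $L^2\cap L^{2^*}$, so the required integrability holds. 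Only a lower bound on the $G$-term is needed, so even a one-sided liminf argument via Fatou's lemma suffices.
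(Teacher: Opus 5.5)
Your proof is correct. The lower bound $\tilde m_0-\tilde m_\varepsilon\gtrsim\varepsilon^\sigma$ is essentially the paper's argument, while your upper bound takes a different and shorter route.

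\textbf{Lower bound.} The paper also tests $\tilde m_\varepsilon$ with $tW_\rho$, shows that the fibre maximizer tends to $1$, and uses that $\frac12\|\nabla W_\rho\|_2^2-\frac aq\int(I_\alpha*|W_\rho|^{\frac{N+\alpha}{N}})|W_\rho|^q<0$. It takes the optimal $\rho_0$ (the maximizer of $g_0$) instead of an arbitrary large $\rho$. That is the same $\rho^{-2}$ versus $\rho^{-(Nq-N-\alpha)/2}$ comparison you make. Your stated ``main obstacle'' is not really one. Since $\frac{t^2}{2}\|W_\rho\|_2^2-\frac{t^{2(N+\alpha)/N}}{2}\mathbb D(W_\rho)\le\tilde m_0$ for every $t\ge0$, you only need $t_\varepsilon\to1$ plus Fatou on the $G$-term, not an $O(\varepsilon^\sigma)$ rate for $t_\varepsilon-1$.

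\textbf{Upper bound.} The paper tests $\tilde m_0$ with the dilation $(w_\varepsilon)_{t_\varepsilon}$, $t_\varepsilon=\tau(w_\varepsilon)^{1/\alpha}$, and bounds $J_\varepsilon((w_\varepsilon)_{t_\varepsilon})\le\tilde m_\varepsilon$ by Lemma \ref{lem25}. It must then show that the $G$-cross terms discarded in passing from $J_\varepsilon$ to $J_0$ are $O(\varepsilon^\sigma)$, using \textbf{(H4)}, HLS and the $H^1$ bound of Lemma \ref{lem42}. For this it only needs $\tau(w_\varepsilon)$ to stay bounded.

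You project along amplitudes instead. The resulting value of $J_0$, namely $\frac{\alpha}{2(N+\alpha)}\tau(w_\varepsilon)^{N/\alpha}\|w_\varepsilon\|_2^2$, is in fact the same as the paper's. You then compare it with $\tilde m_\varepsilon$ through the Poho\v zaev expression $\tilde m_\varepsilon\ge\frac{\alpha}{2(N+\alpha)}\|w_\varepsilon\|_2^2$, so no nonlinear term has to be estimated at this stage.

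\textbf{Trade-off.} Your route relies on the sharp rate $\tau(w_\varepsilon)\le 1+C\varepsilon^\sigma$ from Lemma \ref{lem43}, where the cross-term estimates have already been absorbed. The paper's dilation comparison is cruder here, but it is the computation the paper refines in Lemma \ref{lem45} to obtain the lower bound on $\int(I_\alpha*|w_\varepsilon|^{\frac{N+\alpha}{N}})|w_\varepsilon|^q$. Your identity-based shortcut does not by itself provide that bound.
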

\begin{proof}
By {\bf (H4)}, Lemma \ref{lem25}, Lemma \ref{lem43} and the boundedness of $\{w_{\varepsilon}\}$ in $H^1(\mathbb R^N)$, we find that
$$\begin{aligned}
\tilde m_0&\leq \sup_{t\geq0}J_0((w_{\varepsilon})_t)=J_0((w_{\varepsilon})_{t_{\varepsilon}})\\
&\leq \sup_{t\geq0}J_{\varepsilon}((w_{\varepsilon})_t)-\frac{1}{2}\varepsilon^{\sigma}\int_{\mathbb R^N}|\nabla(w_{\varepsilon})_{t_{\varepsilon}}|^2\\
&\ \ \ +\frac{1}{2}\int_{\mathbb R^N}
(I_{\alpha}*(2|(w_{\varepsilon})_{t_{\varepsilon}}|^{\frac{N+\alpha}{N}}+\varepsilon_1^{-1}G(\varepsilon_2(w_{\varepsilon})_{t_{\varepsilon}})))
\varepsilon_1^{-1}G(\varepsilon_2(w_{\varepsilon})_{t_{\varepsilon}})dx\\
&\leq \tilde m_{\varepsilon}+\varepsilon^{\sigma}\Big(\tau(w_{\varepsilon})^{\frac{N+\alpha}{\alpha}}
C\int_{\mathbb R^N}(I_{\alpha}*|w_{\varepsilon}|^{\frac{N+\alpha}{N}})|w_{\varepsilon}|^{q}
-\frac{\tau(w_{\varepsilon})^{\frac{N-2}{\alpha}}}{2}\int_{\mathbb R^N}|\nabla w_{\varepsilon}|^2\Big)+o(\varepsilon^{\sigma})\\
&\leq  \tilde m_{\varepsilon}+C\varepsilon^{\sigma},
\end{aligned}$$
where
\begin{equation}\label{e420}
t_{\varepsilon}=\left(\frac{N}{N+\alpha}\cdot \frac{\int_{\mathbb R^N}|w_{\varepsilon}|^2}{\int_{\mathbb R^N}(I_{\alpha}*|w_{\varepsilon}|^{\frac{N+\alpha}{N}})|w_{\varepsilon}|^{\frac{N+\alpha}{N}}}\right)^{\frac{1}{\alpha}}=\tau(w_{\varepsilon})^{\frac{1}{\alpha}}.
\end{equation}

For each $\rho>0$, the family $W_{\rho}:=\rho^{-\frac{N}{2}}W_1(x/\rho)$ are radial ground states of \eqref{e19}.
Then
$$\begin{aligned}
g_0(\rho):=&\frac{a}{q}\int_{\mathbb R^N}(I_{\alpha}*|W_\rho|^{\frac{N+\alpha}{N}})|W_\rho|^{q}-\frac{1}{2}\int_{\mathbb R^N}|\nabla W_\rho|^2\\
=&\frac{a}{q}\rho^{\frac{N+\alpha}{2}-\frac{Nq}{2}}\int_{\mathbb R^N}(I_{\alpha}*|W_1|^{\frac{N+\alpha}{N}})|W_1|^{q}
-\frac{1}{2}\rho^{-2}\int_{\mathbb R^N}|\nabla W_1|^2.\\
\end{aligned}$$
Clearly, there exists $\rho_0=\rho(q)\in (0,\infty)$ with
$$\rho_0=\left(\frac{2q\int_{\mathbb R^N}|\nabla W_1|^2}
{a[Nq-(N+\alpha)]\int_{\mathbb R^N}(I_{\alpha}*|W_1|^{\frac{N+\alpha}{N}})|W_1|^{q}}\right)^{\frac{2}{N+\alpha+4-Nq}},$$
such that
$$
g_0(\rho_0)=\sup_{\rho>0}g_0(\rho)=\frac{N+\alpha+4-Nq}{2[Nq-(N+\alpha)]}\cdot\rho_0^{-2}\int_{\mathbb R^N}|\nabla W_1|^2>0.
$$
Let $W_0=W_{\rho_0}$,
then by {\bf (H4)} and Lemma \ref{lem25}, there exists $t_{\varepsilon} \in (0,\infty)$ with $t_{\varepsilon}W_0 \in \mathcal{N}_{\varepsilon}$ such that
\begin{equation}\label{e421}
\begin{aligned}
\tilde m_{\varepsilon}&\leq \sup_{t\geq0}J_{\varepsilon}(tW_0)=J_{\varepsilon}(t_{\varepsilon}W_0)\\
&\leq\sup_{t\geq 0}\left(\frac{t^2}{2}\int_{\mathbb R^N}|W_0|^2
-\frac{t^{\frac{2(N+\alpha)}{N}}}{2}\int_{\mathbb R^N}(I_{\alpha}*|W_0|^{\frac{N+\alpha}{N}})|W_0|^{\frac{N+\alpha}{N}}\right)\\
&\ -\varepsilon^{\sigma}\left(\frac{a}{q}t_{\varepsilon}^{\frac{N+\alpha}{N}+q}
\int_{\mathbb R^N}(I_{\alpha}*|W_0|^{\frac{N+\alpha}{N}})|W_0|^{q}-\frac{t_{\varepsilon}^2}{2}\int_{\mathbb R^N}|\nabla W_0|^2\right)+o(\varepsilon^{\sigma})\\
&=\tilde m_0-\varepsilon^{\sigma}\left(\frac{a}{q}t_{\varepsilon}^{\frac{N+\alpha}{N}+q}
\int_{\mathbb R^N}(I_{\alpha}*|W_0|^{\frac{N+\alpha}{N}})|W_0|^{q}-\frac{t_{\varepsilon}^2}{2}\int_{\mathbb R^N}|\nabla W_0|^2\right)+o(\varepsilon^{\sigma}).\\
\end{aligned}
\end{equation}
According to $t_{\varepsilon}W_0 \in \mathcal{N}_{\varepsilon}$, we have
$$\begin{aligned}
t_{\varepsilon}^2&\int_{\mathbb R^N}\varepsilon^{\sigma}|\nabla W_0|^2+|W_0|^2
=t_{\varepsilon}^{\frac{2(N+\alpha)}{N}}\cdot\frac{N+\alpha}{N}\int_{\mathbb R^N}(I_{\alpha}*|W_0|^{\frac{N+\alpha}{N}})|W_0|^{\frac{N+\alpha}{N}}\\
&+\int_{\mathbb R^N}(I_{\alpha}*|t_{\varepsilon}W_0|^{\frac{N+\alpha}{N}})\Big(\frac{N+\alpha}{N}\varepsilon_1^{-1}G(\varepsilon_2t_{\varepsilon}W_0)
+\varepsilon_1^{-1}\varepsilon_2g(\varepsilon_2t_{\varepsilon}W_0)t_{\varepsilon}W_0\Big)dx\\
&+\int_{\mathbb R^N}(I_{\alpha}*\varepsilon_1^{-1}G(\varepsilon_2t_{\varepsilon}W_0))\varepsilon_1^{-1}\varepsilon_2g(\varepsilon_2t_{\varepsilon}W_0)t_{\varepsilon}W_0dx.\\
\end{aligned}$$
If $t_{\varepsilon}\geq1$, by {\bf  (H4)}, we have
$$\begin{aligned}
\int_{\mathbb R^N}\varepsilon^{\sigma}|\nabla W_0|^2+|W_0|^2
&\geq t_{\varepsilon}^{\frac{2\alpha}{N}}\Big\{\frac{N+\alpha}{N}\int_{\mathbb R^N}(I_{\alpha}*|W_0|^{\frac{N+\alpha}{N}})|W_0|^{\frac{N+\alpha}{N}}\\
&+\frac{a(N+\alpha+Nq)}{Nq}\varepsilon^{\sigma}\int_{\mathbb R^N}(I_{\alpha}*|W_0|^{\frac{N+\alpha}{N}})|W_0|^{q}+o(\varepsilon^{\sigma})\Big\}.\\
\end{aligned}$$
Hence,
$$
1\le t_{\varepsilon}\le\left(\frac{\frac{2(N+\alpha)\tilde m_0}{\alpha}
+\varepsilon^{\sigma}\int_{\mathbb R^N}|\nabla W_0|^2}{\frac{2(N+\alpha)\tilde m_0}{\alpha}+\frac{a(N+\alpha+Nq)}{Nq}\varepsilon^{\sigma}\int_{\mathbb R^N}
(I_{\alpha}*|W_0|^{\frac{N+\alpha}{N}})|W_0|^{q}+o(\varepsilon^{\sigma})}\right)^{\frac{N}{2\alpha}}.
$$
If $t_{\varepsilon}\leq1$, by {\bf (H4)}, we have
$$\begin{aligned}
\int_{\mathbb R^N}\varepsilon^{\sigma}|\nabla W_0|^2+|W_0|^2
&\leq t_{\varepsilon}^{\frac{2\alpha}{N}}\Big\{\frac{N+\alpha}{N}\int_{\mathbb R^N}(I_{\alpha}*|W_0|^{\frac{N+\alpha}{N}})|W_0|^{\frac{N+\alpha}{N}}\\
&+\frac{a(N+\alpha+Nq)}{Nq}\varepsilon^{\sigma} \int_{\mathbb R^N}(I_{\alpha}*|W_0|^{\frac{N+\alpha}{N}})|W_0|^{q}+o(\varepsilon^{\sigma})\Big\}.\\
\end{aligned}$$
Hence,
$$
1\ge t_{\varepsilon}\ge \left(\frac{\frac{2(N+\alpha)\tilde m_0}{\alpha}
+\varepsilon^{\sigma}\int_{\mathbb R^N}|\nabla W_0|^2}{\frac{2(N+\alpha)\tilde m_0}{\alpha}+\frac{a(N+\alpha+Nq)}{Nq}\varepsilon^{\sigma} \int_{\mathbb R^N}
(I_{\alpha}*|W_0|^{\frac{N+\alpha}{N}})|W_0|^{q}+o(\varepsilon^{\sigma})}\right)^{\frac{N}{2\alpha}}.
$$
So we conclude that $t_{\varepsilon}\to1$ as $\varepsilon \to 0$.

Since $W_0=W_{\rho_0}$ and  $g_0(\rho_0)>0$, we have 
$$\frac{a}{q}\int_{\mathbb R^N}(I_{\alpha}*|W_0|^{\frac{N+\alpha}{N}})|W_0|^{q}>\frac{1}{2}\int_{\mathbb R^N}|\nabla W_0|^2,$$
therefore, it follows from \eqref{e421} that there exists a constant $C>0$ such that
$$\tilde m_{\varepsilon}\leq \tilde m_0-C\varepsilon^{\sigma}$$ for small $\varepsilon>0$,
and the conclusion follows.
\end{proof}

\begin{lemma}\label{lem45}
Under the conditions {\bf (H1)} and {\bf (H4)}, we have
$$\int_{\mathbb R^N}(I_{\alpha}*|w_{\varepsilon}|^{\frac{N+\alpha}{N}})|w_{\varepsilon}|^{q}\sim\|\nabla w_{\varepsilon}\|_2^2\sim1\ \ \  \mbox{as}\ \ \  \varepsilon\to0.$$
\end{lemma}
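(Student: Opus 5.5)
We need to show $\mathbb E_\varepsilon:=\int(I_\alpha*|w_\varepsilon|^{\frac{N+\alpha}{N}})|w_\varepsilon|^q\sim 1$ and $\|\nabla w_\varepsilon\|_2^2\sim1$. The upper bounds are already available: by Lemma \ref{lem42}, $\{w_\varepsilon\}$ is bounded in $H^1(\mathbb R^N)$, and the Hardy--Littlewood--Sobolev and Gagliardo--Nirenberg inequalities give $\mathbb E_\varepsilon\lesssim \|w_\varepsilon\|_2^{\cdot}\|\nabla w_\varepsilon\|_2^{\frac{Nq-(N+\alpha)}{2}}\lesssim 1$. By the identity \eqref{e45}, $\|\nabla w_\varepsilon\|_2^2$ and $\mathbb E_\varepsilon$ differ by $o_\varepsilon(1)$ after multiplication by a fixed positive constant. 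It therefore suffices to obtain a uniform lower bound on $\mathbb E_\varepsilon$; equivalently, we must rule out $\|\nabla w_\varepsilon\|_2\to0$ along a subsequence.

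For the lower bound I will use the energy gap from Lemma \ref{lem44}, namely $\tilde m_0-\tilde m_\varepsilon\ge c\varepsilon^\sigma$. Rerun the first half of the proof of Lemma \ref{lem44}, but keep the terms explicitly rather than bounding them by $C\varepsilon^\sigma$. With $t_\varepsilon=\tau(w_\varepsilon)^{1/\alpha}$ and $\tau(w_\varepsilon)=1+O(\varepsilon^\sigma)$ by Lemma \ref{lem43}, this gives
\begin{equation*}
\tilde m_0\le \tilde m_\varepsilon+\varepsilon^\sigma\Big(C\,\mathbb E_\varepsilon-\tfrac12\tau(w_\varepsilon)^{\frac{N-2}{\alpha}}\|\nabla w_\varepsilon\|_2^2\Big)+R_\varepsilon,
\end{equation*}
where $R_\varepsilon$ collects the cross terms $\int (I_\alpha*\varepsilon_1^{-1}G)\varepsilon_1^{-1}G$ and the non-leading parts of $G$. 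Combining this with $\tilde m_0-\tilde m_\varepsilon\ge c\varepsilon^\sigma$ and dropping the negative gradient term yields
\begin{equation*}
c\,\varepsilon^\sigma\le C\varepsilon^\sigma \mathbb E_\varepsilon+R_\varepsilon .
\end{equation*}

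The key step, and the main obstacle, is showing $R_\varepsilon=o(\varepsilon^\sigma)$ uniformly. The term from $\tilde G$ is handled as in \eqref{e412}: split $|\tilde G(s)|\le\delta s^q+C_\delta s^{\frac{N+\alpha}{N-2}}$. This gives the bound $\varepsilon^\sigma(\delta\,\mathbb E_\varepsilon+C_\delta\varepsilon_2^{\frac{N+\alpha}{N-2}-q}\cdot O(1))$, where the second factor tends to zero by Sobolev and the $H^1$ bound. The quadratic term in $\varepsilon_1^{-1}G$ is bounded by $\varepsilon^{2\sigma}\int(I_\alpha*|w|^q)|w|^q+\dots=O(\varepsilon^{2\sigma})$ plus terms carrying positive powers of $\varepsilon_2$, as in \eqref{e417}. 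One must check that the $t_\varepsilon$ factors stay bounded; this follows from Lemma \ref{lem43}.

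Dividing by $\varepsilon^\sigma$ and letting $\delta\to0$ gives $\liminf_{\varepsilon\to0}\mathbb E_\varepsilon\ge c/C>0$. Then \eqref{e45} yields $\|\nabla w_\varepsilon\|_2^2\gtrsim1$ for small $\varepsilon$, which completes the chain of equivalences.
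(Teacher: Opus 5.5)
Your proposal is correct and is essentially the paper's proof. The paper also bounds $\tilde m_0\le\sup_{t\ge0}J_0((w_\varepsilon)_t)$ with $t_\varepsilon=\tau(w_\varepsilon)^{1/\alpha}$, shows the cross and $\tilde G$ contributions are $o(\varepsilon^\sigma)$, and uses $\tilde m_0-\tilde m_\varepsilon\gtrsim\varepsilon^\sigma$ from Lemma \ref{lem44} to obtain $\int_{\mathbb R^N}(I_\alpha*|w_\varepsilon|^{\frac{N+\alpha}{N}})|w_\varepsilon|^q\gtrsim1$; it then concludes with HLS for the upper bound and \eqref{e45} for the gradient. The only minor difference is that the paper rewrites the gradient term via \eqref{e46}, getting the coefficient $\frac{a[N+\alpha+4-Nq]}{4q}$, whereas you drop that nonpositive term, which works equally well and is slightly simpler.
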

\begin{proof}
First, combining \eqref{e46}, \eqref{e414}, \eqref{e420}, {\bf (H4)} and Lemma \ref{lem25}, we have
$$\begin{aligned}
\tilde m_0&\leq \sup_{t\geq0}J_0((w_{\varepsilon})_t)\\
&\leq \tilde m_{\varepsilon}+
\frac{1}{2}\tau(w_\varepsilon)^{\frac{N+\alpha}{\alpha}}\int_{\mathbb R^N}(I_{\alpha}\ast (2|w_\varepsilon|^{\frac{N+\alpha}{N}}+\varepsilon_1^{-1}G(\varepsilon_2w_{\varepsilon}))\varepsilon_1^{-1}G(\varepsilon_2w_{\varepsilon})dx\\
& \quad-\frac{1}{2}\tau(w_\varepsilon)^{\frac{N-2}{\alpha}}\int_{\mathbb R^N}(I_{\alpha}*|w_{\varepsilon}|^{\frac{N+\alpha}{N}})
\left[\frac{N}{2}\varepsilon_1^{-1}g(\varepsilon_2w_{\varepsilon})\varepsilon_2w_{\varepsilon}-\frac{N+\alpha}{2}\varepsilon_1^{-1}G(\varepsilon_2w_{\varepsilon})\right]dx\\
& \quad-\frac{1}{2}\tau(w_\varepsilon)^{\frac{N-2}{\alpha}}\int_{\mathbb R^N}(I_{\alpha}*\varepsilon_1^{-1}G(\varepsilon_2w_{\varepsilon}))
\left[\frac{N}{2}\varepsilon_1^{-1}g(\varepsilon_2w_{\varepsilon})\varepsilon_2w_{\varepsilon}-\frac{N+\alpha}{2}\varepsilon_1^{-1}G(\varepsilon_2w_{\varepsilon})\right]dx\\
&\leq \tilde m_{\varepsilon}+\varepsilon^{\sigma}\left(\frac{a[N+\alpha+4-Nq]}{4q}\right)\int_{\mathbb R^N}(I_{\alpha}*|w_{\varepsilon}|^{\frac{N+\alpha}{N}})
|w_{\varepsilon}|^{q}+o(\varepsilon^{\sigma}),
\end{aligned}$$
and it follows from Lemma \ref{lem44} that
$$\int_{\mathbb R^N}(I_{\alpha}*|w_{\varepsilon}|^{\frac{N+\alpha}{N}})|w_{\varepsilon}|^{q}
\geq \frac{4q(\tilde m_0-\tilde m_{\varepsilon}+o(\varepsilon^\sigma))\varepsilon^{-\sigma}}{a[N+\alpha+4-Nq]}\geq C>0.
$$

On the other hand, by the Hardy-Littlewood-Sobolev inequality, H\"{o}lder inequality and the boundedness of $\{w_{\varepsilon}\}$ in $H^1(\mathbb R^N)$, we have
$\int_{\mathbb R^N}(I_{\alpha}*|w_{\varepsilon}|^{\frac{N+\alpha}{N}})|w_{\varepsilon}|^{q}\leq C$.
Therefore, $\int_{\mathbb R^N}(I_{\alpha}*|w_{\varepsilon}|^{\frac{N+\alpha}{N}})|w_{\varepsilon}|^{q}\sim 1$ as $\varepsilon\to 0$.

Finally, it follows from \eqref{e45} that
$$\|\nabla w_{\varepsilon}\|_2^2\sim1\ \ \mbox{as}\ \ \varepsilon\to 0.$$
The proof is complete.
\end{proof}

The following result is a special case of the classical Brezis-Lieb \cite{BLE1983} for Riesz potentials and for a proof, we refer the reader to \cite{MS2013}.

\begin{lemma}\label{lem46}
Let $N \in \mathbb N$, $\alpha \in (0,N)$ and $\{w_{n}\}$ be a bounded sequence in $L^2(\mathbb R^N)$. If $w_{n}\to w$ almost everywhere
on $\mathbb R^N$ as $n\to \infty$, then
$$\lim_{n\to\infty}\mathbb D(w_n)-\mathbb D(w_n-w_0)=\mathbb D(w_0).$$
\end{lemma}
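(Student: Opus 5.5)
The plan is to reduce the statement to the classical Brezis--Lieb lemma in a Lebesgue space and then use the bilinear structure of $\mathbb D$ together with the Hardy--Littlewood--Sobolev inequality (Lemma \ref{lem21}). (I read the limit function $w$ in the hypothesis and $w_0$ in the conclusion as the same function.) Set $p=\frac{N+\alpha}{N}\in(1,2)$ and $r=\frac{2N}{N+\alpha}$, so that $pr=2$. Put $f_n=|w_n|^p$, $h_n=|w_n-w_0|^p$ and $f=|w_0|^p$. Since $\{w_n\}$ is bounded in $L^2(\mathbb R^N)$ and $w_0\in L^2(\mathbb R^N)$ by Fatou, the sequences $\{f_n\}$ and $\{h_n\}$ are bounded in $L^r(\mathbb R^N)$. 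Write $\langle \phi,\psi\rangle=\int_{\mathbb R^N}(I_\alpha*\phi)\psi$, which is symmetric and, by Lemma \ref{lem21}, continuous on $L^r\times L^r$.

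\emph{Step 1 (local Brezis--Lieb).} I will show that $f_n-h_n\to f$ strongly in $L^r(\mathbb R^N)$.
\begin{itemize}
\item The starting point is the elementary inequality: for every $\delta>0$ there is $C_\delta>0$ with
$$\big||a+b|^p-|a|^p\big|\le \delta|a|^p+C_\delta|b|^p .$$
\item Apply it with $a=w_n-w_0$ and $b=w_0$. The function
$$\big(\big||w_n|^p-|w_n-w_0|^p-|w_0|^p\big|-\delta|w_n-w_0|^p\big)_+^r$$
is then dominated by a constant multiple of $|w_0|^2$.
\item It tends to $0$ a.e., because $w_n\to w_0$ a.e.
\item Dominated convergence and then $\delta\to0$ (using that $\||w_n-w_0|^p\|_r^r=\|w_n-w_0\|_2^2$ is bounded) give $\|f_n-h_n-f\|_r\to0$. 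This is exactly the Brezis--Lieb argument with the exponent pair $(p,r)$, $pr=2$.
\end{itemize}

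\emph{Step 2 (algebraic splitting).} By symmetry of $\langle\cdot,\cdot\rangle$,
$$\mathbb D(w_n)-\mathbb D(w_n-w_0)=\langle f_n,f_n\rangle-\langle h_n,h_n\rangle=\langle f_n-h_n,f_n-h_n\rangle+2\langle f_n-h_n,h_n\rangle .$$
By Step 1 and the continuity of the form, the first term converges to $\langle f,f\rangle=\mathbb D(w_0)$.

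\emph{Step 3 (cross term).} By HLS, $I_\alpha*(f_n-h_n)\to I_\alpha*f$ strongly in $L^{\frac{2N}{N-\alpha}}(\mathbb R^N)$, which is the dual space of $L^r$. Meanwhile $h_n$ is bounded in the reflexive space $L^r$ and $h_n\to0$ a.e., so $h_n\rightharpoonup0$ weakly in $L^r$: every weakly convergent subsequence has an a.e. limit that must be $0$. A strongly convergent sequence paired with a weakly null one gives $\langle f_n-h_n,h_n\rangle\to0$, which completes the proof.

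The main obstacle is Step 1, i.e.\ getting strong $L^r$ convergence from only $L^2$ boundedness and a.e. convergence. The $\delta$-truncation argument handles it, and it works because $p>1$ and $|w_0|^{pr}=|w_0|^2$ is integrable. The weak-null identification in Step 3 is the only other point needing care, and it is standard.
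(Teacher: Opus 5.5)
Your proof is correct, and it is essentially the standard argument behind the result the paper relies on. The paper gives no proof of its own and only cites the Brezis--Lieb lemma for Riesz potentials of Moroz--Van Schaftingen, which is proved by exactly your scheme: Brezis--Lieb truncation to get $|w_n|^p-|w_n-w_0|^p\to|w_0|^p$ in $L^{\frac{2N}{N+\alpha}}$, the symmetric splitting of the quadratic form, and HLS paired with $|w_n-w_0|^p\rightharpoonup 0$. Your version specialises this to $p=\frac{N+\alpha}{N}$, where $L^2$-boundedness is precisely the needed hypothesis. The one loosely stated point is that the weak limit of $|w_n-w_0|^p$ coincides with its a.e. limit $0$; this is a standard fact (e.g.\ via Egorov's theorem or Mazur's lemma) and leaves no gap.
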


\begin{lemma}\label{lem47}
Assume that $N\geq3$, {\bf (H1)} and {\bf (H4)} hold, then there exists $\xi_{\varepsilon}\in(0,\infty)$ satisfying
$$\xi_{\varepsilon}\thicksim \varepsilon^{-\frac{(2+\alpha)[Nq-(N+\alpha)]}{2\alpha(N+\alpha+4-Nq)}},$$
such that the rescaled ground states
$$w_{\varepsilon}(x)=\xi_{\varepsilon}^{\frac{N}{2}}v_{\varepsilon}(\xi_{\varepsilon}x)$$
converge to $W_{\rho_0}$ in $H^1(\mathbb R^N)$ as
$\varepsilon\to0$, where $W_{\rho_0}$ is a positive ground state of the equation \eqref{e19} with
\begin{equation}\label{e4222}
\rho_0=\left(\frac{2q\int_{\mathbb R^N}|\nabla W_1|^2}
{a[Nq-(N+\alpha)]\int_{\mathbb R^N}(I_{\alpha}*|W_1|^{\frac{N+\alpha}{N}})|W_1|^{q}}\right)^{\frac{2}{N+\alpha+4-Nq}}.
\end{equation}
\end{lemma}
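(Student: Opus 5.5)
Starting from the already-rescaled family $w_\varepsilon(x)=\varepsilon^{-N\sigma/4}v_\varepsilon(\varepsilon^{-\sigma/2}x)$ of \eqref{e41}, I will add one more $L^2$-preserving dilation $\tilde w_\varepsilon(x)=\rho_\varepsilon^{N/2}w_\varepsilon(\rho_\varepsilon x)$ with $\rho_\varepsilon\sim1$, chosen so that the weak limit has the right gradient norm. Then $\xi_\varepsilon=\varepsilon^{-\sigma/2}\rho_\varepsilon\sim\varepsilon^{-\sigma/2}$, which is the claimed rate. By Lemma \ref{lem42} and Lemma \ref{lem45}, $\{w_\varepsilon\}$ is bounded in $H^1(\mathbb R^N)$ and $\|\nabla w_\varepsilon\|_2\sim\|w_\varepsilon\|_2\sim1$. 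Since $w_\varepsilon$ is radial, Strauss compactness gives $w_\varepsilon\rightharpoonup w_0$ in $H^1$ along a subsequence, and $w_\varepsilon\to w_0$ in $L^s$ for $s\in(2,2^*)$. Hence $\int(I_\alpha*|w_\varepsilon|^{\frac{N+\alpha}{N}})|w_\varepsilon|^q\to\int(I_\alpha*|w_0|^{\frac{N+\alpha}{N}})|w_0|^q$, using the HLS splitting already used in the proof of Theorem \ref{t12}; this is where $q>\frac{N+\alpha}{N}$ matters. Lemma \ref{lem45} then gives $w_0\neq0$.

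Next I identify the limit. Lemma \ref{lem43} gives $\tau(w_\varepsilon)\to1$, and Lemma \ref{lem44} gives $J_0(w_\varepsilon)\to\tilde m_0$, because the perturbation terms are $O(\varepsilon^\sigma)$. Thus $\{w_\varepsilon\}$ is a minimizing sequence for $S_\alpha$ in \eqref{e110}. I plan to use Lemma \ref{lem46} (Brezis--Lieb for $\mathbb D$) together with the $L^2$ splitting $\|w_\varepsilon\|_2^2=\|w_0\|_2^2+\|w_\varepsilon-w_0\|_2^2+o(1)$ and strict concavity of $t\mapsto t^{N/(N+\alpha)}$. This should give $w_\varepsilon\to w_0$ in $L^2$, with $w_0$ an optimizer satisfying $w_0\in\mathcal N_0$. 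By the classification of minimizers, $w_0=W_{\rho_1}$ for some $\rho_1>0$ (positive radial, so no translation).

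The main obstacle is upgrading to $D^{1,2}$ convergence and identifying $\rho_1$. I will refine the two-sided energy comparison. The upper bound \eqref{e421} is valid for every $W_\rho$, giving $\tilde m_\varepsilon\le\tilde m_0-\varepsilon^\sigma g_0(\rho)+o(\varepsilon^\sigma)$, hence $\tilde m_0-\tilde m_\varepsilon\ge\varepsilon^\sigma g_0(\rho_0)+o(\varepsilon^\sigma)$. For the lower bound I test $J_0$ on $(w_\varepsilon)_{t_\varepsilon}$ and use \eqref{e45}, Lemma \ref{lem43} and $\tau(w_\varepsilon)=1+O(\varepsilon^\sigma)$. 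This gives
$$\tilde m_0-\tilde m_\varepsilon\le\varepsilon^\sigma\Big(\tfrac aq\textstyle\int(I_\alpha*|w_\varepsilon|^{\frac{N+\alpha}{N}})|w_\varepsilon|^q-\tfrac12\|\nabla w_\varepsilon\|_2^2\Big)+o(\varepsilon^\sigma).$$
Combining the two, and passing to the limit in the $q$-term, yields
$$g_0(\rho_0)\le\tfrac aq\textstyle\int(I_\alpha*|w_0|^{\frac{N+\alpha}{N}})|w_0|^q-\tfrac12\liminf\|\nabla w_\varepsilon\|_2^2\le g_0(\rho_1)-\tfrac12\big(\liminf\|\nabla w_\varepsilon\|^2_2-\|\nabla w_0\|_2^2\big).$$
Since $g_0(\rho_0)=\max g_0$ and $\liminf\|\nabla w_\varepsilon\|^2_2\ge\|\nabla w_0\|_2^2$, equality must hold throughout. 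Thus $\rho_1=\rho_0$, because $\rho_0$ is the unique maximizer of $g_0$, and $\|\nabla w_\varepsilon\|_2\to\|\nabla w_0\|_2$ along the subsequence. Together with weak convergence this gives strong $H^1$ convergence to $W_{\rho_0}$. Uniqueness of the limit makes the whole family converge, so here $\rho_\varepsilon\equiv1$ suffices.

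If equality turns out to be subtler because of the $o(1)$ remainder in \eqref{e45}, the fallback is the following. Choose $\rho_\varepsilon$ so that $\|\nabla\tilde w_\varepsilon\|_2^2=\frac{a[Nq-(N+\alpha)]}{2q}\int(I_\alpha*|\tilde w_\varepsilon|^{\frac{N+\alpha}{N}})|\tilde w_\varepsilon|^q$ holds exactly; this is solvable since the two sides scale like $\rho^{-2}$ and $\rho^{-(Nq-N-\alpha)/2}$ with different exponents, and \eqref{e45} forces $\rho_\varepsilon\to1$. Passing to the limit gives the Pohozaev-type relation $g_0'(\rho)=0$ for the limit of $\tilde w_\varepsilon$, which again forces the limit to be $W_{\rho_0}$ with matching gradient norms.
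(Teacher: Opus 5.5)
Your proof is correct, and at the decisive step it takes a genuinely different route from the paper's.

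For the $L^2$ convergence the two arguments are interchangeable. The paper treats $\{w_{\varepsilon_n}\}$ as a Palais--Smale sequence of $J_0$ at level $\tilde m_0$ and uses Lemma \ref{lem46} together with $\langle J_\varepsilon'(w_{\varepsilon_n})-J_0'(w_0),w_{\varepsilon_n}-w_0\rangle$. You instead use that it is a minimizing sequence for $S_\alpha$ and strict subadditivity of $t\mapsto t^{N/(N+\alpha)}$.

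The real difference is how $\rho$ is identified. Write $Q(w):=\int_{\mathbb R^N}(I_\alpha*|w|^{\frac{N+\alpha}{N}})|w|^q$. The paper extracts from \eqref{e45} and weak lower semicontinuity only $\rho\ge\rho_0$. It then tries to pin the limit with an extra normalization $\bar w_\varepsilon=z_\varepsilon^{-1}w_\varepsilon(z_\varepsilon^{-2/N}x)$, $z_\varepsilon=w_\varepsilon(0)/W_{\rho_0}(0)$. That step has two weaknesses:
\begin{itemize}
\item it uses convergence of $w_\varepsilon(0)$, which is not implied by a.e.\ convergence;
\item \eqref{e45} rewritten for $\bar w_\varepsilon$ reads $z_\varepsilon^{4/N}\|\nabla\bar w_\varepsilon\|_2^2=\frac{a[Nq-(N+\alpha)]}{2q}z_\varepsilon^{q-\frac{N+\alpha}{N}}Q(\bar w_\varepsilon)+o(1)$, so it gives $D^{1,2}$ convergence only if $z_\varepsilon\to1$, i.e.\ only if $\rho=\rho_0$ already.
\end{itemize}

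You supply exactly the missing reverse inequality. From \eqref{e421} with $t_\varepsilon\to1$ you get $\tilde m_0-\tilde m_\varepsilon\ge\varepsilon^\sigma g_0(\rho_0)+o(\varepsilon^\sigma)$. Your lower bound is the first display in the proof of Lemma \ref{lem45}, $\tilde m_0-\tilde m_\varepsilon\le\varepsilon^\sigma\frac{a(N+\alpha+4-Nq)}{4q}Q(w_\varepsilon)+o(\varepsilon^\sigma)$, rewritten via \eqref{e45}; the $o(1)$ there is harmless because it multiplies $\varepsilon^\sigma$. Uniqueness of the maximizer of $g_0$ then gives $\rho_1=\rho_0$ and convergence of the gradient norms at once.

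Your route gains the following:
\begin{itemize}
\item the canonical scaling $\xi_\varepsilon=\varepsilon^{-\sigma/2}$ itself works;
\item the whole family converges, not just subsequences;
\item no pointwise information at the origin is needed;
\item as a by-product you get $\tilde m_0-\tilde m_\varepsilon=g_0(\rho_0)\varepsilon^\sigma+o(\varepsilon^\sigma)$, which sharpens Lemma \ref{lem44}.
\end{itemize}

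Two minor points. First, the first inequality in your displayed chain actually holds with $\limsup\|\nabla w_\varepsilon\|_2^2$: take $\limsup$ in $\frac12\|\nabla w_\varepsilon\|_2^2\le\frac aqQ(w_\varepsilon)-g_0(\rho_0)+o(1)$. Use that, or pass to a further subsequence on which the gradient norms converge, so that equality really yields $\|\nabla w_\varepsilon\|_2\to\|\nabla w_0\|_2$. Second, your fallback is unnecessary and would not close the argument by itself. Passing to the limit in an exact identity $\|\nabla\tilde w_\varepsilon\|_2^2=c\,Q(\tilde w_\varepsilon)$ gives only $\|\nabla\tilde w_0\|_2^2\le c\,Q(\tilde w_0)$, i.e.\ again just $\rho\ge\rho_0$. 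It is your energy comparison that provides the other direction.
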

\begin{proof}
The proof is similar to that in \cite[Lemma 4.8]{MM}. For the readers' convenience, we give the detail proof.  For any $\varepsilon_n\rightarrow0$ as $n\rightarrow\infty$, $w_{\varepsilon_n}$ is a positive radially symmetric function, and by Lemma \ref{lem42}, $\{w_{\varepsilon_n}\}$ is bounded in $H^1(\mathbb R^N)$,
therefore, there exists $w_{0} \in H^1(\mathbb R^N)$ satisfying $w=\frac{N+\alpha}{N}(I_{\alpha}*|w|^{\frac{N+\alpha}{N}})|w|^{\frac{N+\alpha}{N}-2}w$ such that up to a subsequence, there hold 
\begin{equation}\label{e4211}
w_{\varepsilon_n}\rightharpoonup w_{0}\ \ \mbox{in}\ \ H^1(\mathbb R^N),\ \ \ w_{\varepsilon_n}\rightarrow w_{0}\ \ \mbox{in}\ \ L^{s}(\mathbb R^N)\ \ \mbox{for} \ \ \forall\ s\in(2,2^*),
\end{equation}
and
\begin{equation}\label{e422}
w_{\varepsilon_n}\rightarrow w_{0}\ \ \mbox{in}\ \ L_{loc}^2(\mathbb R^N),\ \ \ w_{\varepsilon_n}(x)\rightarrow w_{0}(x)\ \ \mbox{a.e.  on} \ \ \mathbb R^N.
\end{equation}
It is not hard to show that
$$\begin{aligned}
J_{0}(w_{\varepsilon_n})&=J_{\varepsilon_n}(w_{\varepsilon_n})
+\frac{1}{2}\int_{\mathbb R^N}(I_\alpha\ast \varepsilon_1^{-1}G(\varepsilon_2w_{\varepsilon_n}))
\cdot\varepsilon_1^{-1}G(\varepsilon_2w_{\varepsilon_n})dx\\
&+\int_{\mathbb R^N}(I_\alpha\ast |w_{\varepsilon_n}|^{\frac{N+\alpha}{N}})\cdot \varepsilon_1^{-1}G(\varepsilon_2w_{\varepsilon_n})dx
-\frac{1}{2}\varepsilon_n^{\sigma}\int_{\mathbb R^N}|\nabla w_{\varepsilon_n}|^2\\
&=\tilde m_{\varepsilon_n}+o(1)=\tilde m_{0}+o(1),
\end{aligned}$$
and
$$\begin{aligned}
J'_{0}(w_{\varepsilon_n})w&=J'_{\varepsilon_n}(w_{\varepsilon_n})w
+\int_{\mathbb R^N}(I_\alpha\ast |w_{\varepsilon_n}|^{\frac{N+\alpha}{N}})
\varepsilon_1^{-1}\varepsilon_2g(\varepsilon_2w_{\varepsilon_n})wdx\\
&+\int_{\mathbb R^N}(I_\alpha\ast \varepsilon_1^{-1}G(\varepsilon_2 w_{\varepsilon_n}))(\frac{N+\alpha}{N}|w_{\varepsilon_n}|^{\frac{N+\alpha}{N}-2}w_{\varepsilon_n} 
+\varepsilon_1^{-1}\varepsilon_2g(\varepsilon_2w_{\varepsilon_n}))wdx\\
&-\varepsilon_n^{\sigma}\int_{\mathbb R^N}\nabla w_{\varepsilon_n}\nabla w=o(1).
\end{aligned}$$
Therefore, $\{w_{\varepsilon_n}\}$ is a PS sequence of $J_0$ at level
$\tilde m_0=\frac{\alpha}{2N}\left(\frac{N}{N+\alpha}S_{\alpha}\right)^{\frac{N+\alpha}{\alpha}}$.

By Lemma \ref{lem45}, arguing as the proof of \eqref{e316}, we obtain  $\int_{\mathbb R^N}(I_\alpha\ast |w_0|^{\frac{N+\alpha}{N}})|w_0|^q\not=0$. Therefore,  $w_0\neq 0$. Notice that $J_0'(w_0)=0$, we obtain $w_0\in \mathcal N_0$ and hence $\tilde m_0\leq J_0(w_0)$. By Lemma \ref{lem44} and Lemma \ref{lem46}, we have
$$\begin{aligned}
o(1)=&\tilde m_{\varepsilon_n}-\tilde m_0\ge J_{\varepsilon_n}(w_{\varepsilon_n})-J_0(w_0)\\
    \geq&\frac{1}{2}(\|w_{\varepsilon_n}\|_2^2-\|w_0\|_2^2)-\frac{1}{2}(\mathbb D(w_{\varepsilon_n})-\mathbb D(w_{0}))
    +\frac{1}{2}\varepsilon_n^{\sigma}\int_{\mathbb R^N}|\nabla w_{\varepsilon_n}|^2\\
    -&\int_{\mathbb R^N}(I_{\alpha}*|w_{\varepsilon_n}|^{\frac{N+\alpha}{N}})\varepsilon_1^{-1}G(\varepsilon_2w_{\varepsilon_n})dx
     -\frac{1}{2}\int_{\mathbb R^N}(I_{\alpha}*\varepsilon_1^{-1}G(\varepsilon_2w_{\varepsilon_n}))\varepsilon_1^{-1}G(\varepsilon_2w_{\varepsilon_n})dx\\
     =&\frac{1}{2}\|w_{\varepsilon_n}-w_0\|_2^2-\frac{1}{2}\mathbb D(w_{\varepsilon_n}-w_{0})+o(1),
\end{aligned}$$
and
$$\begin{aligned}
0&=\langle J'_{\varepsilon}(w_{\varepsilon_n})-J'_{0}(w_{0}),w_{\varepsilon_n}-w_{0}\rangle\\
 &=\varepsilon_n^{\sigma}\int_{\mathbb R^N}\nabla w_{\varepsilon_n}\nabla (w_{\varepsilon_n}-w_0)+\int_{\mathbb R^N}w_{\varepsilon_n}(w_{\varepsilon_n}-w_0)\\
 &-\int_{\mathbb R^N}(I_{\alpha}*(|w_{\varepsilon_n}|^{\frac{N+\alpha}{N}}+\varepsilon_1^{-1}G(\varepsilon_2w_{\varepsilon_n})))\\
 &\ \ \ \ \ \ \ \cdot(\frac{N+\alpha}{N}|w_{\varepsilon_n}|^{\frac{N+\alpha}{N}-2}w_{\varepsilon_n}(w_{\varepsilon_n}-w_0)
  +\varepsilon_1^{-1}\varepsilon_2g(\varepsilon_2w_{\varepsilon_n})(w_{\varepsilon_n}-w_0))dx\\
 &-\int_{\mathbb R^N}w_0(w_{\varepsilon_n}-w_0)
  +\int_{\mathbb R^N}(I_{\alpha}*|w_0|^{\frac{N+\alpha}{N}})\frac{N+\alpha}{N}|w_0|^{\frac{N+\alpha}{N}-2}w_0(w_{\varepsilon_n}-w_0)dx\\
 &=\|w_{\varepsilon_n}-w_0\|_2^2-\frac{N+\alpha}{N}\mathbb D(w_{\varepsilon_n}-w_{0})+o(1).
\end{aligned}$$
Hence, it follows that
$$\|w_{\varepsilon_n}-w_0\|_2^2\leq\mathbb D(w_{\varepsilon_n}-w_{0})+o(1)=\frac{N}{N+\alpha}\|w_{\varepsilon_n}-w_0\|_2^2+o(1),$$
and hence $$\|w_{\varepsilon_n}-w_0\|_2^2\to 0\ \ \ \mbox{as}\ \ \ n \to \infty.$$
By Hardy-Littlewood-Sobolev inequality and Lemma \ref{lem46}, it then follows that
$$\lim_{n\to \infty}\mathbb D(w_{\varepsilon_n})=\mathbb D(w_0).$$

On the other hand, by the boundedness of $\{w_{\varepsilon_n}\}$ in $H^1(\mathbb R^N)$, we have
$$\begin{aligned}
\tilde m_{\varepsilon_n}=J_{\varepsilon}(w_{\varepsilon_n})
&=\frac{1}{2}\int_{\mathbb R^N}\varepsilon_n^{\sigma}|\nabla w_{\varepsilon_n}|^2+|w_{\varepsilon_n}|^2\\
&-\frac{1}{2}\int_{\mathbb R^N}(I_{\alpha}*(|w_{\varepsilon_n}|^{\frac{N+\alpha}{N}}
+\varepsilon_1^{-1}G(\varepsilon_2w_{\varepsilon_n})))(|w_{\varepsilon_n}|^{\frac{N+\alpha}{N}}+\varepsilon_1^{-1}G(\varepsilon_2w_{\varepsilon_n}))dx\\
&= \frac{1}{2}\int_{\mathbb R^N}|w_{\varepsilon_n}|^2
-\frac{1}{2}\int_{\mathbb R^N}(I_{\alpha}*|w_{\varepsilon_n}|^{\frac{N+\alpha}{N}})|w_{\varepsilon_n}|^{\frac{N+\alpha}{N}}+o(1).
\end{aligned}$$
Letting $\varepsilon \to 0$, it then follows from Lemma \ref{lem44} that
$$\tilde m_0= \frac{1}{2}\int_{\mathbb R^N}|w_0|^2
-\frac{1}{2}\int_{\mathbb R^N}(I_{\alpha}*|w_0|^{\frac{N+\alpha}{N}})|w_0|^{\frac{N+\alpha}{N}}=J_0(w_0).$$
 Thus, $w_0=W_{\rho}$ for some $\rho \in (0,\infty)$.

Moreover, by \eqref{e45}, we obtain
$$\begin{aligned}
\|\nabla w_0\|_2^2\leq \lim_{\varepsilon_n\to0}\|\nabla w_{\varepsilon_n}\|_2^2
&=\lim_{\varepsilon_n\to0}\frac{a[Nq-(N+\alpha)]}{2q}
\int_{\mathbb R^N}(I_{\alpha}*|w_{\varepsilon_n}|^{\frac{N+\alpha}{N}})|w_{\varepsilon_n}|^{q}\\
&=\frac{a[Nq-(N+\alpha)]}{2q}
\int_{\mathbb R^N}(I_{\alpha}*|w_0|^{\frac{N+\alpha}{N}})|w_0|^{q},
\end{aligned}$$
from which it follows that
$$\rho\geq \left(\frac{2q\|\nabla W_1\|_2^2}{a[Nq-(N+\alpha)]
\int_{\mathbb R^N}(I_{\alpha}*|W_1|^{\frac{N+\alpha}{N}})|W_1|^{q}}\right)^{\frac{2}{N+\alpha+4-Nq}}.$$
If $\rho=\rho_0$, then \eqref{e45} implies that $\lim_{\varepsilon_n \to 0}\|\nabla w_{\varepsilon_n}\|_2^2=\|\nabla W_{\rho_0}\|_2^2$, and hence
$w_{\varepsilon_n}\to W_{\rho_0}$ in $\mathcal{D}^{1,2}(\mathbb R^N)$.

Let
$$M_{\varepsilon_n}=w_{\varepsilon_n}(0)\ \ \ \mbox{and}\ \ \ z_{\varepsilon_n}=M_{\varepsilon_n}[W_{\rho_0}(0)]^{-1},$$
where $\rho_0$ is given in \eqref{e421}. We further perform a scaling
$$\bar{w}_{\varepsilon_n}(x)=z_{\varepsilon_n}^{-1}w_{\varepsilon_n}(z_{\varepsilon_n}^{-\frac{2}{N}}x),$$
then $$\bar{w}_{\varepsilon_n}(0)=z_{\varepsilon_n}^{-1}w_{\varepsilon_n}(0)=M_{\varepsilon_n}^{-1}[W_{\rho_0}(0)]w_{\varepsilon_n}(0)=W_{\rho_0}(0),$$
and $\bar{w}_{\varepsilon_n}$ satisfies the rescaled equation
\begin{equation}\label{e423}
\begin{aligned}
-\varepsilon_n^{\sigma}z_{\varepsilon_n}^{\frac{4}{N}}\Delta \bar{w}_{\varepsilon_n}+\bar{w}_{\varepsilon_n}
=&\left(I_{\alpha}*(|\bar{w}_{\varepsilon_n}|^{\frac{N+\alpha}{N}} +\varepsilon_1^{-1}z_{\varepsilon_n}^{-\frac{N+\alpha}{N}}
G(\varepsilon_2z_{\varepsilon_n}\bar{w}_{\varepsilon_n}))\right)\\
\cdot&\left(\frac{N+\alpha}{N}|\bar{w}_{\varepsilon_n}|^{\frac{N+\alpha}{N}-2}\bar{w}_{\varepsilon_n}
+\varepsilon_1^{-1}\varepsilon_2z_{\varepsilon_n}^{-\frac{\alpha}{N}}g(\varepsilon_2z_{\varepsilon_n}\bar{w}_{\varepsilon_n})\right).
\end{aligned}
\end{equation}
The corresponding functional is given by
$$\begin{aligned}
\bar{J}_{\varepsilon_n}(\bar{w}_{\varepsilon_n})
&=\frac{1}{2}\int_{\mathbb R^N}\varepsilon_n^{\sigma}z_{\varepsilon_n}^{\frac{4}{N}}|\nabla \bar{w}_{\varepsilon_n}|^2+|\bar{w}_{\varepsilon_n}|^2\\
&-\frac{1}{2}\int_{\mathbb R^N}(I_{\alpha}*(|\bar{w}_{\varepsilon_n}|^{\frac{N+\alpha}{N}}
+\varepsilon_1^{-1}z_{\varepsilon_n}^{-\frac{N+\alpha}{N}}G(\varepsilon_2z_{\varepsilon_n}\bar{w}_{\varepsilon_n})))\\
&\ \ \ \ \ \ \ \ \ \ \cdot(|\bar{w}_{\varepsilon_n}|^{\frac{N+\alpha}{N}}
+\varepsilon_1^{-1}z_{\varepsilon_n}^{-\frac{N+\alpha}{N}}G(\varepsilon_2z_{\varepsilon_n}\bar{w}_{\varepsilon_n}))dx.
\end{aligned}$$
Moreover, we have
\begin{itemize}
\item[(i)] $\|\bar{w}_{\varepsilon_n}\|_2^2=\|w_{\varepsilon_n}\|_2^2,\ \ \ \ \ \
z_{\varepsilon_n}^{\frac{4}{N}}\|\nabla \bar{w}_{\varepsilon_n}\|_2^2=\|\nabla w_{\varepsilon_n}\|_2^2.$
\item[(ii)] $\int_{\mathbb R^N}(I_{\alpha}*|\bar{w}_{\varepsilon_n}|^{\frac{N+\alpha}{N}})|\bar{w}_{\varepsilon_n}|^{\frac{N+\alpha}{N}}
             =\int_{\mathbb R^N}(I_{\alpha}*|w_{\varepsilon_n}|^{\frac{N+\alpha}{N}})|w_{\varepsilon_n}|^{\frac{N+\alpha}{N}}.$
\end{itemize}

By \eqref{e422}, for any $\varepsilon_n\to 0$, there exists $\rho\geq\rho_0$ such that
$$M_{\varepsilon_n}=w_{\varepsilon_n}(0)\to W_{\rho}(0)=\rho^{-\frac{N}{2}}W_1(0)\leq \rho_0^{-\frac{N}{2}}W_1(0)<\infty,$$
which yields that $M_{\varepsilon_n}\leq C$ for some $C>0$ and any small $\varepsilon_n>0$. Suppose that there exists a sequence  of $\{\varepsilon_n\}$ (still denoted by $\varepsilon_n$)  satisfies $\varepsilon_n\to 0$ and  $M_{\varepsilon_n}\to 0$.
Then by \eqref{e422}, up to a subsequence, $M_{\varepsilon_n}=w_{\varepsilon_n}(0) \to W_{\rho}(0)\neq 0$ for some $\rho \in (0,\infty)$. This leads to a contradiction. Therefore, there exists some $c>0$ such that $M_{\varepsilon_n}\geq c>0$.

Let
$$\xi_{\varepsilon_n}=z_{\varepsilon_n}^{\frac{2}{N}}\varepsilon_n^{\frac{(2+\alpha)[Nq-(N+\alpha)]}{2\alpha(N+\alpha+4-Nq)}},$$
then
$$\xi_{\varepsilon_n}\thicksim \varepsilon_n^{\frac{(2+\alpha)[Nq-(N+\alpha)]}{2\alpha(N+\alpha+4-Nq)}},$$
and for small $\varepsilon_n>0$, the rescaled family of ground states
$$\bar{w}_{\varepsilon_n}(x)=\xi_{\varepsilon_n}^{-\frac{N}{2}}v_{\varepsilon_n}(\xi_{\varepsilon_n}^{-1}x)$$
satisfies
$$\|\nabla \bar{w}_{\varepsilon_n}\|_2^2\thicksim \int_{\mathbb R^N}
(I_{\alpha}*|\bar{w}_{\varepsilon_n}|^{\frac{N+\alpha}{N}})|\bar{w}_{\varepsilon_n}|^{\frac{N+\alpha}{N}}
\thicksim \|\bar{w}_{\varepsilon_n}\|_2^2\thicksim1,$$
and as $\varepsilon_n \to 0$, $\bar{w}_{\varepsilon_n}$ converges to the extremal function $W_{\rho_0}$ in $\mathcal{D}^{1,2}(\mathbb R^N)$.
Then by
$\bar{w}_{\varepsilon_n}\to W_{\rho_0}$ in $L^2(\mathbb R^N)$, we conclude that $\bar{w}_{\varepsilon_n}\to W_{\rho_0}$ in $H^1(\mathbb R^N)$.
\end{proof}

\begin{proof}[Proof of Theorem \ref{t11}]
Since $w_{\varepsilon}\in \mathcal{P}_{\varepsilon}$, it follows from the boundedness of $\{w_\varepsilon\}$ in $H^1(\mathbb R^N)$ that
$$\begin{aligned}
\tilde m_{\varepsilon}&=\frac{1}{2}\int_{\mathbb R^N}\varepsilon^{\sigma}|\nabla w_{\varepsilon}|^2+|w_{\varepsilon}|^2\\
&\ \ \ -\frac{1}{2}\int_{\mathbb R^N}
(I_\alpha\ast (|w_{\varepsilon}|^{\frac{N+\alpha}{N}}+\varepsilon_1^{-1}G(\varepsilon_2w_{\varepsilon})))(|w_{\varepsilon}|^{\frac{N+\alpha}{N}}
+\varepsilon_1^{-1}G(\varepsilon_2w_{\varepsilon}))dx\\
&=\frac{2+\alpha}{2(N+\alpha)}\varepsilon^\sigma \int_{\mathbb R^N}|\nabla w_\varepsilon|^2+\frac{\alpha}{2(N+\alpha)}\int_{\mathbb R^N}|w_\varepsilon|^2\\
&=\frac{\alpha}{2(N+\alpha)}\int_{\mathbb R^N}|w_{\varepsilon}|^2+O(\varepsilon^{\sigma}).
\end{aligned}$$
Similarly, we also have
$$\tilde m_{0}=\frac{\alpha}{2(N+\alpha)}\int_{\mathbb R^N}|W_1|^2
=\frac{\alpha}{2N}\int_{\mathbb R^N}(I_{\alpha}*|W_1|^{\frac{N+\alpha}{N}})|W_1|^{\frac{N+\alpha}{N}}.$$

According to Lemma \ref{lem44}, we get that
$$\int_{\mathbb R^N}|W_1|^2-\int_{\mathbb R^N}|w_{\varepsilon}|^2
=\frac{2(N+\alpha)}{\alpha}(\tilde m_0-\tilde m_{\varepsilon})+O(\varepsilon^{\sigma})=O(\varepsilon^{\sigma}).$$
Also, by  $\|W_1\|_2^2=\frac{N+\alpha}{N}\int_{\mathbb R^N}(I_{\alpha}*|W_1|^{\frac{N+\alpha}{N}})|W_1|^{\frac{N+\alpha}{N}}
=\left(\frac{N}{N+\alpha}\right)^{\frac{N}{\alpha}}S_{\alpha}^{\frac{N+\alpha}{\alpha}}$, we conclude that
$$\|w_{\varepsilon}\|_2^2=\left(\frac{N}{N+\alpha}\right)^{\frac{N}{\alpha}}S_{\alpha}^{\frac{N+\alpha}{\alpha}}+O(\varepsilon^{\sigma}).$$
Then by $w_{\varepsilon}\in \mathcal{N}_{\varepsilon}$, we obtain
$$\begin{aligned}
\int_{\mathbb R^N}(I_{\alpha}*|w_{\varepsilon}|^{\frac{N+\alpha}{N}})|w_{\varepsilon}|^{\frac{N+\alpha}{N}}
=\frac{N}{N+\alpha}\|w_{\varepsilon}\|_2^2+O(\varepsilon^{\sigma})=\left(\frac{N}{N+\alpha}S_{\alpha}\right)^{\frac{N+\alpha}{\alpha}}+O(\varepsilon^{\sigma}).
\end{aligned}$$
Finally, by \eqref{e41}, Lemma \ref{lem44}, Lemma \ref{lem45} and Lemma \ref{lem47}, we conclude the proof.
\end{proof}


\section{Final remarks} 

We  discuss briefly   the asymptotic behavior of ground states as $\varepsilon\to +\infty$.
To this end, we also make the following stronger assumption
\begin{itemize}
\smallskip

\item[\bf (H5)] there exist $b>0$ and $r\in (\frac{N+\alpha}{N}, \frac{N+\alpha}{N-2})$
such that
$
\lim_{s\to \infty}g(s)/|s|^{r-1}=b.
$
\end{itemize}
\smallskip
\smallskip

Arguing as in \cite[Theorem 2.1, Theorem 2.3]{MM2024},   we obtain the following result. 

\begin{theorem}\label{t51}  Assume  {\bf (H1)} and {\bf (H5)} hold. Then 
the problem \eqref{e12} admits a positive ground state $u_{\varepsilon} \in H^1(\mathbb R^N)$, which is radially symmetric and radially nonincreasing. Furthermore, the following statements hold true: as $\varepsilon \to \infty$,
$$u_{\varepsilon}(0)=\|u_\varepsilon\|_\infty\sim \varepsilon^{\frac{2+\alpha}{4(r-1)}},$$
$$\|u_{\varepsilon}\|_2^2=\varepsilon^{\frac{N+\alpha+2-Nr}{2(r-1)}}
\left\{\frac{[(N+\alpha)-(N-2)r]b^2}{2r^2}\left(\frac{r}{b^2}S_{r}\right)^{\frac{r}{r-1}}+o_\varepsilon(1)\right\},$$
$$\|\nabla u_{\varepsilon}\|_2^2=\varepsilon^{\frac{(N+\alpha)-(N-2)r}{2(r-1)}}
\left\{\frac{[Nr-(N+\alpha)]b^2}{2r^2}\left(\frac{r}{b^2}S_{r}\right)^{\frac{r}{r-1}}+o_\varepsilon(1)\right\}.$$
The least energy  of the ground state satisfies
$$I_{\varepsilon}(u_{\varepsilon})=\varepsilon^{\frac{(N+\alpha)-(N-2)r}{2(r-1)}}
\left[\frac{b^2(r-1)}{2r^2}\left(\frac{r}{b^2}S_{r}\right)^{\frac{r}{r-1}}+o_{\varepsilon}(1)\right],$$
where $$S_{r}=\inf_{w\in H^1(\mathbb R^N)\setminus \{0\}}\frac{\int_{\mathbb R^N}|\nabla w|^2+|w|^2}{(\int_{\mathbb R^N}(I_{\alpha}*|w|^{r})|w|^{r})^{\frac{1}{r}}}.$$
Moreover, as $\varepsilon \to \infty$, up a subsequence,  the rescaled ground states
\begin{equation}\label{4.1}
w_\varepsilon(x)=\varepsilon^{-\frac{2+\alpha}{4(r-1)}}u_\varepsilon(\varepsilon^{-\frac{1}{2}} x),
\end{equation}
converge in $H^1(\mathbb R^N)$ to a positive solution $w_{\infty}\in H^1(\mathbb R^N)$ of the equation
$$-\Delta w+ w=\frac{b^2}{r}(I_{\alpha}*|w|^{r})|w|^{r-2}w.$$
\end{theorem}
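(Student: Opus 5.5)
The plan is to work with the rescaling \eqref{4.1}, $w(x)=\varepsilon^{-\frac{2+\alpha}{4(r-1)}}u(\varepsilon^{-\frac12}x)$. A direct computation shows that \eqref{e12} becomes
\[
-\Delta w+w=\big(I_\alpha*(\varepsilon^{-\kappa}|w|^{\frac{N+\alpha}{N}}+\varepsilon^{-\theta}G(\varepsilon^{\beta}w))\big)\big(\tfrac{N+\alpha}{N}\varepsilon^{-\kappa'}|w|^{\frac{N+\alpha}{N}-2}w+\varepsilon^{-\theta'}g(\varepsilon^\beta w)\big),
\]
with $\beta=\frac{2+\alpha}{4(r-1)}$ and exponents chosen so that $\varepsilon^{-\theta}G(\varepsilon^\beta s)\to \frac br|s|^r$ locally uniformly as $\varepsilon\to\infty$, by {\bf (H5)}. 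The same computation shows that the HLS-critical term carries a negative power of $\varepsilon$, because $r>\frac{N+\alpha}{N}$. The limit functional is therefore
\[
J_\infty(w)=\tfrac12\|w\|_{H^1}^2-\tfrac{b^2}{2r^2}\int_{\mathbb R^N}(I_\alpha*|w|^r)|w|^r,
\]
which is the functional of the limit equation, and its ground level is
\[
m_\infty=\frac{b^2(r-1)}{2r^2}\Big(\frac{r}{b^2}S_r\Big)^{\frac r{r-1}},
\]
attained at $w_\infty$. The norm identities in the theorem then follow by scaling back once $w_\varepsilon\to w_\infty$ in $H^1$ and the rescaled energies converge. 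The Nehari and Poho\v zaev identities for $w_\infty$ give $\|\nabla w_\infty\|_2^2$ and $\|w_\infty\|_2^2$ in terms of $m_\infty$, which produces the stated constants.

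The steps are as follows.

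First, existence for large $\varepsilon$. I will show that the rescaled level satisfies $m_\varepsilon^{resc}<$ the HLS-compactness threshold, which in these variables is $\varepsilon^{c}\tilde m_0\to\infty$. To do this, test with $w_\infty$ and use the minimax characterization from Lemma \ref{lem25}; this gives $\limsup m^{resc}_\varepsilon\le m_\infty$. Then the argument of \cite{LLT2022}, already invoked in the proof of Theorem \ref{t12}, yields a positive radial nonincreasing ground state.

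Second, uniform bounds. Using the Poho\v zaev combination, as in the proof of Lemma \ref{lem42}, the quantity $\frac{2+\alpha}{2(N+\alpha)}\|\nabla w_\varepsilon\|^2+\frac{\alpha}{2(N+\alpha)}\|w_\varepsilon\|^2$ equals the level, which is bounded. For the lower bound $\liminf m_\varepsilon^{resc}\ge m_\infty$, I will project $w_\varepsilon$ onto the limit Nehari manifold by $t_\varepsilon w_\varepsilon$ and show $t_\varepsilon\to1$, arguing as in Lemma \ref{lem44}.

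Third, convergence. I will show that the rescaled perturbation converges: the HLS-critical term tends to $0$ by the HLS inequality and the $H^1$ bound, and $\varepsilon^{-\theta}G(\varepsilon^\beta w_\varepsilon)-\frac br|w_\varepsilon|^r\to0$ in $L^{2N/(N+\alpha)}$. Radial compactness (Strauss) gives strong $L^s$ convergence for $2<s<2^*$, and from this I will obtain a nontrivial weak limit that solves the limit equation at level $m_\infty$, hence is a ground state. Strong $H^1$ convergence then follows from convergence of the norms.

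The main obstacle is the third step: controlling $\varepsilon^{-\theta}G(\varepsilon^\beta w)$ uniformly. {\bf (H5)} only describes the behaviour at infinity, and near $s=0$ we have $g(s)=o(s^{\alpha/N})$ from {\bf (H1)}, which after rescaling could dominate where $w_\varepsilon$ is small. I will split $\{\varepsilon^\beta w_\varepsilon\le R\}$ from its complement. On the small-value region I will use $G(s)\lesssim s^{\frac{N+\alpha}{N}}+s^r$, which makes this region's contribution carry a vanishing power of $\varepsilon$. On the large-value region I will use $|G(s)-\frac br s^r|\le\delta s^r$ for $s\ge R$. Finally, the $L^\infty$ estimate $u_\varepsilon(0)\sim\varepsilon^{\beta}$ comes from uniform elliptic regularity of $w_\varepsilon$ (via Lemma \ref{lem24}) together with $w_\varepsilon(0)\to w_\infty(0)>0$.
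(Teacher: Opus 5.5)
Your proposal is correct and is essentially the argument the paper imports from \cite{MM2024}. You rescale by \eqref{4.1} and show that the HLS-critical and small-amplitude contributions carry vanishing powers of $\varepsilon$, so the rescaled level tends to the ground level of the limit Choquard problem with exponent $r$; this gives existence below the HLS threshold, and the Poho\v zaev combination gives the $H^1$ bound. Radial compactness together with the limit Nehari and Poho\v zaev identities then yields the $H^1$ convergence and the stated constants. The one step to state more carefully is the upper bound $\sup_\varepsilon\|w_\varepsilon\|_\infty<\infty$: Lemma \ref{lem24} is only qualitative, so you need an elliptic bootstrap with $\varepsilon$-independent constants. That is available because the rescaled nonlinearity is bounded by $C(|s|^{\frac{\alpha}{N}}+|s|^{r-1})$ uniformly for $\varepsilon\ge 1$ and $r<\frac{N+\alpha}{N-2}$.
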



The $L^2$--mass of the ground--state
\begin{equation}\label{e52}
M(\varepsilon):=\|u_\varepsilon\|_2^2,
\end{equation}
plays a key role in the analysis of stability of the corresponding standing--wave solution of the time--dependent NLS.  More precisely, the importance of $M(\varepsilon)$ is for instance seen in the Grillakis-Shatah-Strauss theory \cite{Grillakis-1,Grillakis-2, Shatah-1, Weinstein-1} of stability for these solutions within the time-dependent Schr\"odinger equation. The latter says that the solution $u_\varepsilon$ is orbitally stable when $M'(\varepsilon) > 0$ and that it is unstable when $M'(\varepsilon) < 0$. Therefore the intervals where $M(\varepsilon)$ is increasing furnish stable solutions whereas those where $M(\varepsilon)$ is decreasing correspond to unstable solutions. 

Fig. 2 outlines the limits of $M(\varepsilon)$ as $\varepsilon\to 0$ and $\varepsilon\to\infty$ and reveals the variation of $M(\varepsilon)$
for small $\varepsilon> 0$ and large $\varepsilon > 0$ when $(q, r)$ belongs to different regions in the $(q, r)$ plane, as
described in Theorem \ref{t11} and Theorem \ref{t51}.

\begin{figure}[h]
	\centering 
\begin{tabular}{c|c|c}
          \hline
	  &  $q\in (\frac{N+\alpha}{N},  \frac{N+\alpha+4}{N})$ & $q\in (\frac{N+\alpha+4}{N}, \frac{N+\alpha}{N-2})$ \\
	  	\hline
		$r\in (\frac{N+\alpha+2}{N}, \frac{N+\alpha}{N-2})$ & $(M(0),M(\infty))=(0,0)$ & $(M(\varepsilon_q),M(\infty))=(M(\varepsilon_q),0)$\\
	\hline
	$r\in (\frac{N+\alpha}{N},  \frac{N+\alpha+2}{N})$ & $(M(0),M(\infty))=(0,+\infty)$ &$ (M(\varepsilon_q),M(\infty))=(M(\varepsilon_q),+\infty)$  \\
	\hline
\end{tabular}
\caption{The variation of $M(\varepsilon)$ for small and large $\varepsilon$, here  $M(0):=\lim_{\varepsilon\to 0}M(\varepsilon)$ and $M(\infty):=\lim_{\varepsilon\to \infty}M(\varepsilon)$}
\label{table1}
\end{figure}
Finally, we remark that we can find intervals $(0,\varepsilon_0)$ and  $(\varepsilon_\infty,\infty)$,  on which  $M(\varepsilon)$ is increasing or decreasing  according to the asymptotic behavior of $M(\varepsilon)$. See \cite[Lemma A.1]{MM}.

\smallskip
\smallskip 
\smallskip 
\smallskip

\noindent
\textbf{Data Availability} This research has no associated data.

\smallskip
\smallskip

\noindent \textbf{Conflict of interest}  The authors are not aware of any Conflict of interest.

\smallskip
\smallskip

\noindent \textbf{Ethics Approval} This research did not require ethics approval.


\end{document}